\newcommand{\constMarcosBeta}{\beta}
\newcommand{\constMarcosAlfa}{\alpha}
\newcommand{\constMarcosGamma}{\gamma}
\newcommand{\constMarcosMu}{\mu}
\newcommand{\constDecaimento}{\phi}
\newcommand{\constNossoBeta}{\rho}
\newcommand{\constNossoMu}{\psi}
\DeclareRobustCommand{\looongleftrightarrow}{
\leftarrow\joinrel\DOTSB\relbar\joinrel\relbar\joinrel\relbar\joinrel\rightarrow}
\renewcommand{\P}{\mathbb{P}}
\renewcommand{\S}{\mathbb{S}}
\newcommand{\Z}{\mathbb{Z}}
\newcommand{\E}{\mathcal{E}}
\newcommand{\Q}{\mathbb{Q}}
\newcommand{\slab}{\mathbf}
\newcommand{\first}{\textbf}
\newcommand{\frontA}{\partial}
\newcommand{\frontB}{\partial}
\newcommand{\frontC}{\partial^{c}}
\newcommand{\frontD}{\partial^{out}}
\DeclareRobustCommand{\looooongleftrightarrow}{\leftarrow\joinrel\relbar\joinrel\relbar\joinrel\relbar\joinrel\relbar\joinrel\relbar\joinrel\relbar\joinrel\relbar\joinrel\rightarrow}
\newtheorem{theorem}{Theorem}
\newtheorem{lema}{Lemma}
\newtheorem{proposition}{Proposition}
\newtheorem{definition}{Definition}
\newtheorem{corollary}{Corollary}[theorem]
\newtheorem{claim}{Claim}
\newtheorem{remark}{Remark}
\tikzstyle{modSK} = [rectangle, rounded corners, minimum width=3cm, minimum height=1cm,text centered, draw=black, fill=blue!30, text width=3.5cm,font=\footnotesize]
\tikzstyle{modZ2Dep} = [rectangle, rounded corners, minimum width=3cm, minimum height=1cm,text centered, draw=black, fill=orange!30, text width=4.5cm,font=\footnotesize]
\tikzstyle{modZ2inDep} = [rectangle, rounded corners, minimum width=3cm, minimum height=1cm,text centered, draw=black, fill=green!30, text width=3.5cm,font=\footnotesize]
\tikzstyle{arrow} = [thick,->,>=stealth]
\title{Critical percolation on slabs with random columnar disorder}
\begin{document}
\author{
    Matheus B. Castro\thanks{Departamento de Matemática, Universidade Federal de Minas Gerais, Belo Horizonte, Brazil. \newline}\,\,\,\,\,\,\,\,\,\,\,\,\,
    Rémy Sanchis$^*$\,\,\,\,\,\,\,\,\,\,\,\,\,
    Roger W. C. Silva\thanks{Departamento de Estatística, Universidade Federal de Minas Gerais, Belo Horizonte, Brazil.\newline}
}
\date{}
\maketitle
{\centering\small \textit{This paper is dedicated to the memory of Elisvaldo M. Silva}\par}
\vspace{1cm}
\begin{abstract}
We explore critical Bernoulli percolation on three-dimensional slabs $\S^+_k=\mathbb{Z}_+^2\times\{0,\dots,k\}$ featuring one-dimensional random reinforcements.  A vertical column indexed by $x\in\Z_+$ is the set $\{x\}\times\mathbb{Z}_+\times\{0\}$. Vertical columns are selected based on the arrivals of a renewal process given by i.i.d. copies of a random variable $\xi$ with $\mathbb{E}(\xi^{\phi})<\infty$, for some $\phi>1$. Edges on selected vertical columns are independently open with probability $q$ and closed with probability  $1-q$. All remaining edges are independently open with probability $p$ and closed with probability  $1-p$. We prove that for all sufficiently large $\constDecaimento$  (depending solely on $k$), the following assertion holds: for every $q>p_c(\S^+_k)$, one can take $p$ strictly smaller than $p_c(\S^+_k)$ such that percolation still occurs. We also derive a new upper bound on the correlation length for Bernoulli percolation on $\S_k^+$. \\

\noindent{\it Keywords: random environment;  phase transition; inhomogeneous percolation; correlation length} 

\noindent {\it AMS 1991 subject classification: 82B43; 82B27} 
\end{abstract}

\section{Introduction}

\subsection{Background and motivation}

Percolation theory is a fundamental field in statistical physics and mathematics, dealing with the behavior of connected clusters in a random graph. Since its introduction by Broadbent and Hammersley in 1957 \cite{BH}, the theory has inspired a variety of models that capture the essence of connectivity and randomness in diverse systems.

In classical percolation on the usual hypercubic lattice $\Z^d$, each edge $e$ is assigned a state $\omega(e)$, either 0 or 1, where the $\omega(e)'s$ are independent Bernoulli random variables with mean $p$. The configuration $\omega$ defines a random subgraph of $\Z^d$, consisting of edges $e$ for which $\omega(e)=1$, and the connected components of this random graph are called open clusters. 
It is well known (see \cite{BH}) that there exists a critical value $p_c\in(0,1)$ such that if $p>p_c$, an infinite open cluster exists almost surely, whereas no such cluster exists when $p< p_c$. We say that percolation occurs when an infinite open cluster exists.

Understanding connectivity in disordered systems remains a cornerstone of statistical physics and mathematics. Among the many percolation models, those incorporating environmental inhomogeneities offer fascinating insights into phase transitions in complex systems.  Perhaps the most influential work in this direction is that of McCoy and Wu \cite{MW}, in which the authors investigated disorder in the Ising model. Since then, the corresponding literature has expanded to include various other models, such as percolation \cite{Brochette,Marcos,H,JMP,KSV,Z}, the Ising model \cite{CK,CKP}, and the contact process \cite{A,BDS,K,L,NV}.  

An important work in this direction is the Brochette percolation model on $\Z^2$ introduced in \cite{Brochette}, which explores the effects of columnar disorder on connectivity in critical percolation on the square lattice. The authors show that even a small density of enhanced columns — where edges are more likely to connect — can enable percolation under specific conditions. 

In Brochette percolation on $\Z^2$, columns are chosen according to independent Bernoulli random variables with parameter $\rho$. Edges on enhanced columns are open with probability $q$, while the remaining edges are open with probability $p<q$. The authors of \cite{Brochette} showed that for all $\varepsilon>0$ and all $\rho>0$, there exists $\delta>0$ such that the model with parameters $p=p_c-\delta$ and $q=p_c+\varepsilon$ percolates, for almost all choices of the set of enhanced columns. Their proof requires a solid understanding of critical and near-critical percolation in two dimensions. In particular, the authors of \cite{Brochette} rely on a polynomial upper bound on the percolation correlation length (see \cite{Kesten1987} and \cite{Nolin}), combined with an ingenious use of Russo-Seymour-Welsh (RSW) techniques.  With these tools, the authors employ a one-step renormalization procedure, comparing the Brochette model to a certain oriented percolation process, which was shown to be supercritical in \cite{KSV}.

A natural and challenging problem is extending the Brochette model to dimension $d=3$. Additionally, it is worth exploring the scenario where columns are selected based on a heavy-tailed renewal process.
Vertical edges within selected columns are more likely to connect, creating a structured randomness reminiscent of real-world disordered systems. The primary difficulties in addressing this problem stem from the absence of  RSW-type estimates and the lack of polynomial upper bounds on the correlation length for Bernoulli percolation in dimensions $d\geq 3$. The best available bound, as established in \cite{DKT}, is exponential and therefore insufficient for this purpose. Moreover, the renormalization scheme proposed in \cite{KSV} is not suitable when the enhanced columns are determined by a heavy-tailed renewal process (see Theorem 1.2 in \cite{Marcos2}).

In this work, we study the Brochette model on slabs of the three-dimensional cubic lattice, where randomly enhanced columns are determined by a heavy-tailed renewal process. Compared with the usual setting of independent geometric gaps, this defines a broader class of spacing variables and introduces long-range dependencies on the environment. Edges along these selected columns are independently open with probability $
q$, while all other edges are independently open with probability $p$. It is known that percolation does not occur on slabs when both 
$p$ and $q$ are at the critical threshold (see \cite{DST}). Our main result shows that if 
$q$ is supercritical, then — even in the presence of predominantly critical (or even slightly subcritical) edges — the presence of these sparsely distributed “strong” columns is enough to ensure percolation.

Our slab-based approach bridges the known two-dimensional setting with the three-dimensional regime. In particular, we obtain a polynomial upper bound for the correlation length on slabs, which we hope can shed some light on the investigation of near-critical percolation on the three-dimensional cubic lattice and related models. 

As mentioned earlier, the multiscale argument in \cite{KSV} is not appropriate in our case. Hence, to show that percolation occurs, we take a different route and implement a renormalization scheme reminiscent of \cite{Marcos}. In that work, the authors study percolation on a square lattice where rows are randomly stretched according to a specific probability function. This creates heavy-tailed gaps between columns, resulting in columnar disorder with infinite-range vertical dependencies. Their multiscale scheme allows them to show a non-trivial phase transition for the model.

Two main difficulties arise when applying the ideas of \cite{Marcos}. First, we have to deal with dependence issues arising from a certain block construction. Second, in \cite{Marcos}, every horizontal edge is open with fixed probability $s$, regardless of the environment. This allows weak blocks in their multiscale scheme to be crossed along a straight line by choosing $s$ sufficiently large, without compromising the argument. In our context, however, horizontal edges have a high probability of being open only within strong blocks, while in weak blocks this probability decreases with the size of the blocks involved in the renormalization step. Consequently, a careful balance between the renormalization step and the multiscale argument is required to ensure that, even with a low density of open horizontal edges in some regions, the argument remains valid.

\subsection{Model and main results}\label{ModelandMainResult}

We study an inhomogeneous bond percolation process, described as follows. Denote the three-dimensional slab of thickness $k$ by $\S^+_k=(\mathcal{V}(\S^+_k),\mathcal{E}(\S^+_k))$,  with $\mathcal{V}(\S^+_k)=\mathbb{Z}^2_+\times\{0,...,k\}$ and $\mathcal{E}(\S^+_k) = \{\langle x,y \rangle: \sum_{i=1}^3|x_i-y_i|=1\}$. 

A vertical column indexed by $x\in\Z_+$ is the set $\{x\}\times\Z_+\times\{0\}$. Vertical columns are selected based on the arrivals of a renewal process with inter-arrival times whose tail distribution depends on a parameter $\phi>1$. Edges on selected vertical columns are independently open with probability $q$ and closed with probability $1-q$. All the remaining edges are independently open with probability $p$ and closed with probability $1-p$.

Compared to \cite{Brochette}, our model introduces two significant additional complexities:
\begin{enumerate}
\item We investigate percolation on slabs rather than $\Z^2$.  The shift from $\Z^2$ to slabs of $\Z^3$ introduces new significant challenges, as our understanding of near-critical behavior in higher dimensions remains incomplete. In particular, we derive a new upper bound for the correlation length of Bernoulli percolation on slabs.
\item We consider environments defined by a renewal process, instead of i.i.d. random variables. 
\end{enumerate}

A formal definition of our model reads as follows: let $\{\xi_i\}_{i\geq 1}$ be a sequence of independent copies of a random variable $\xi$ taking values in $\mathbb{N}=\{1,2,3,...\}$, with \begin{equation}\P(\xi>t)\leq c t^{-\phi},
\label{eq:alphaRenewal}
\end{equation}
for some $\phi>1$. Let $U$ be a random variable such that $$\P(U = k) = \frac{1}{\mathbb{E}(\xi)}\P(\xi>k),\,\,\,\,\,\,\,k\in\mathbb{N}.$$ 
Consider a stationary $\phi$-renewal process $\eta(U,\xi) = \{\eta_i\}_{i\geq 0}$ in $\mathbb{N}$, with inter-arrival times $\xi_i$ and delay time $U$, defined recursively as 
\begin{equation}\eta_0 = U,\quad \eta_i = \eta_{i-1}+\xi_i, \mbox{ for }i \in \mathbb{N}.
\label{renewalProcess}
\end{equation}

Percolation is introduced as follows: set the \first{environment} as $\Lambda = \{\eta_i:i \in \mathbb{N}\}$ and, for each $(p,q) \in [0,1]^2$, let $\P^\Lambda_{p,q}$ be the percolation measure on $\Omega=\{0,1\}^{\mathcal{E}(\S^+_k)}$ under which the $\omega(e)'s$ ($e \in \mathcal{E}(\S^+_k)$) are independent with 
$$\P^\Lambda_{p,q}(\omega(e)=1) = \left\{\begin{matrix}
p &\mbox{ if } e \notin  \mathcal{E}(\Lambda \times \mathbb{Z}_+\times\{0\}),\\
q &\mbox{ if } e \in  \mathcal{E}(\Lambda \times \mathbb{Z}_+\times\{0\}).
\end{matrix}\right.$$

Observe that $\P^{\Lambda}_{p,p}$ is the usual Bernoulli percolation measure with parameter $p$, which we denote by $\P_{p}$. 

Write $\{o\longleftrightarrow \infty\}$ for the event that the open connected component containing the origin is infinite (see Section \ref{notation} for a precise definition). Define the critical threshold as
$$p_c(\S^+_k)=\sup\{p:\P_{p}(o\longleftrightarrow \infty)=0\},$$
noting that $\P_{p_c}(o\longleftrightarrow \infty)=0$; see \cite{DST}.
For simplicity, whenever the context is clear, we will simply write $p_c$. We will prove the following theorem.

\begin{theorem} 
    \label{mainTheorem}
Consider the percolation model $\P_{p,q}^\Lambda$ on the slab $\S^+_k$, with $\Lambda$ given by an $\phi$-renewal process with law $\nu_{\phi}$. There exists $\phi_0 = \phi_0(k)$ such that, for every $\phi>\phi_0$ and every $\phi$-renewal process, the following holds: for all $\varepsilon>0$, there exists $\delta = \delta(\varepsilon)$ such that
$$\P^\Lambda_{p_c-\delta, p_c+\varepsilon}(o \longleftrightarrow \infty)>0, \quad \mbox{for $\nu_{\phi}$-almost every }\Lambda.$$
\end{theorem}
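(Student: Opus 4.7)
The plan is to implement a multiscale renormalization in the spirit of \cite{Marcos}, balancing the polynomial decay of renewal gaps from \eqref{eq:alphaRenewal} against the polynomial upper bound on the subcritical correlation length on $\S^+_k$ derived elsewhere in this paper. At every scale $L_n$ of the renormalization, I would define a notion of good block such that (i) good blocks carry an open cluster spanning them in both the $x$ and $y$ directions, and (ii) the probability of a bad block decays fast enough that the good blocks almost surely form an infinite connected structure containing the origin.

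I would fix scales $L_n = L_0 \prod_{i<n} \ell_i$ and, at each scale, partition the horizontal $x$-axis into intervals of length $L_n$. Call an interval \emph{strong} at scale $n$ if some renewal arrival $\eta_j$ lies in its middle third and both neighboring gaps $\xi_j$ and $\xi_{j+1}$ are smaller than a fixed fraction of $L_n$; otherwise it is \emph{weak}. Since the stationary renewal process has size-biased straddling gaps, assumption \eqref{eq:alphaRenewal} yields $\nu_\phi(\text{weak at scale }n)\leq C L_n^{-(\phi-1)}$. Inside a strong interval the enhanced column at $\eta_j$ is supercritical ($q>p_c$), so finite-energy and restriction estimates produce, with probability depending only on $\varepsilon=q-p_c$, a long open $y$-spanning cluster confined to a thin tube around that column; this is the vertical half of a good block.

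The inductive step is to connect neighboring good blocks at scale $L_{n+1}$ via a \emph{horizontal} crossing at parameter $p=p_c-\delta$ of a region of width at most $L_{n+1}$. The new polynomial bound on the correlation length --- say $L(p_c-\delta)\geq c\,\delta^{-\alpha}$ for some $\alpha=\alpha(k)$ read off from the estimate derived earlier in the paper --- guarantees that for all scales $L_n\leq L(p_c-\delta)$, horizontal crossings at $p_c-\delta$ have probability bounded below by a constant via near-critical finite-size estimates on the slab. I would then tune $\delta=\delta(\varepsilon)$ so that sufficiently many scales lie in this near-critical window; beyond the window, the already-constructed backbone (carried by the supercritical enhanced columns plus short bulk bridges) contains a supercritical percolation process and percolates on its own.

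The main obstacle is the combination of the two issues flagged in the introduction. First, horizontal edges in weak regions at scale $n$ may be shared by several good-block events at scale $n+1$, creating dependencies; I would restrict each block's defining crossings to edge-disjoint subboxes and combine them via the FKG inequality, with the resulting losses absorbed into constants. Second, unlike in \cite{Marcos}, the probability of crossing a weak block decreases polynomially with the block size, so the renormalization step must absorb this loss rather than choose it away by increasing a fixed parameter $s$. A stable recursion therefore requires the renewal tail bound $L_n^{-(\phi-1)}$ to dominate the accumulated loss from horizontal crossings at every scale, which is precisely what forces the threshold $\phi_0=\phi_0(k)$. A Borel--Cantelli argument over bad scales then yields $\P^\Lambda_{p_c-\delta,p_c+\varepsilon}(0\longleftrightarrow\infty)>0$ for $\nu_\phi$-almost every $\Lambda$, which proves Theorem \ref{mainTheorem}.
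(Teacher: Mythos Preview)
Your proposal has a genuine gap in the mechanism that produces high-probability crossings inside good blocks. You declare an interval \emph{strong} when it contains a single renewal arrival $\eta_j$, and then assert that ``the enhanced column at $\eta_j$ is supercritical ($q>p_c$), so finite-energy and restriction estimates produce \ldots\ a long open $y$-spanning cluster confined to a thin tube around that column.'' This is not correct: a single vertical column is a one-dimensional system, and at any $q<1$ it almost surely contains closed edges, so by itself it cannot span. Surrounding it with bulk at $p_c-\delta$ does not help either, since that bulk is subcritical. One enhanced column in an otherwise (sub)critical box does not raise crossing probabilities close to $1$; at best you get a uniformly positive bound, which is not enough to drive the renormalization.

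The paper's route is essentially different on this point. A block is declared $\lambda$-good only when \emph{every} gap between consecutive renewal arrivals inside it is at most $n^\lambda$, so enhanced columns are dense throughout the block. One then uses an Aizenman--Grimmett pivotality comparison (Lemma~\ref{aizenmanGrimmettStep}) to show that in such a block the inhomogeneous law $\P^\Lambda_{p_c-\delta,\,p_c+\varepsilon}$ stochastically dominates, for crossing events, a \emph{homogeneous} law at a slightly supercritical parameter $p_n=p_c+c\,n^{-\lambda c_{15}}$. The correlation length result (Theorem~\ref{corr_length_2}) is then used in the supercritical direction: it gives an \emph{upper} bound $L_\tau(p_n)\le c|p_n-p_c|^{-c_2}$, and choosing $\lambda$ small forces $n\gg L_\tau(p_n)$, so crossings of the $n$-scale block under $\P_{p_n}$ have probability arbitrarily close to $1$. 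You invoke the correlation length in the opposite direction (a lower bound at $p_c-\delta$ to keep scales in a near-critical window), which yields only bounded-away-from-zero crossing probabilities and cannot close the recursion. The Aizenman--Grimmett step, together with the density requirement on enhanced columns, is the missing idea.
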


The argument leading to the proof of Theorem \ref{mainTheorem} is outlined in Section \ref{SectionOverview}. We perform a block construction, coupling our process with a 1-dependent two-dimensional percolation model. Following that, we show that this 1-dependent process dominates an independent model, which is shown to be supercritical. The final step employs a multi-scale argument. 

While our result holds only for sufficiently large $\phi$, we believe it fails when $\phi$ is below a threshold $\phi^*>1$, which could depend on the one-arm critical exponent. We stress that Theorem \ref{mainTheorem} also holds for i.i.d. geometric spacing variables, since \eqref{eq:alphaRenewal} is satisfied in this case.

As we mentioned before,  we derive a new upper bound on the correlation length for classical Bernoulli percolation on slabs, a key quantity for understanding phase transitions in physical complex systems. In fact, the entire scaling theory is fundamentally based on the idea that, for each $p\neq p_c$, there exists a single dominant length scale (known as the correlation length), and all quantities should be evaluated relative to this scale (see \cite{F} and \cite{S}).

Let $H([0,2n]\times[0,n]\times[0,k])$ denote the event that the box $[0,2n]\times[0,n]\times[0,k]$ is crossed horizontally.  For $\tau > 0 \mbox{ and } p > p_c$, define the correlation length at \(p\) by
\begin{equation}\label{cl_def} L_\tau(p) \coloneqq \inf\{n\geq 1: \P_p(H([0,2n]\times[0,n]\times[0,k]))\geq 1-\tau\}. 
\end{equation} 

We will prove the following result.

\begin{theorem}\label{corr_length_2}
Consider Bernoulli percolation on $\S_k^+$ with parameter $p>p_c$. For every $\tau>0$, there are constants $c_1 = c_1(\tau,k)>0$ and $c_2 =  c_2(\tau,k)>0$ such that 
    $$L_\tau(p)\leq c_1|p-p_c|^{-c_2}.$$
\end{theorem}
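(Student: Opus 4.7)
My plan combines the OSSS decision-tree approach of Duminil-Copin, Raoufi, and Tassion with a finite-size renormalization adapted to slabs. Write $\theta_n(p) := \P_p(H([0,2n] \times [0,n] \times [0,k]))$ and $B_n := [0,2n] \times [0,n] \times [0,k]$. The first stage bounds the natural scale $\widetilde{L}(p) := \inf\{n : \theta_n(p) \ge 1/2\}$. I design a randomized algorithm $T$ that reveals $\mathbbm{1}_{H_n}$ by sampling $j \in \{0,\ldots,2n\}$ uniformly and sequentially exploring the cluster attached to the vertical slice $\{j\} \times \{0,\ldots,n\} \times \{0,\ldots,k\}$ within $B_n$; since any horizontal crossing must intersect every slice, $T$ determines $H_n$. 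A direct computation bounds the revealment of each edge by $\delta_e(T) \le (2n+1)^{-1}\,\mathbb{E}_p[\mathrm{ext}_x(C(e)\cap B_n)]$, where $\mathrm{ext}_x$ denotes the $x$-extent. Combining the OSSS inequality with Russo's identity yields
$$\theta_n(p)(1 - \theta_n(p)) \le \frac{C(k)\,\Xi_n(p)}{n}\cdot \frac{d\theta_n}{dp}(p),$$
with $\Xi_n(p) := \max_e \mathbb{E}_p[\mathrm{ext}_x(C(e)\cap B_n)]$. Using sharpness of the phase transition on $\S^+_k$ (applying the DRT machinery directly to slabs) together with a block renormalization exploiting the finite thickness $k$, I establish $\Xi_n(p) \le C(k)\,n^\alpha$ for some $\alpha \in (0,1)$ uniformly in a right-neighborhood of $p_c$. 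Integrating the resulting ODE $\frac{d}{dp}\log\frac{\theta_n}{1 - \theta_n} \ge c(k)\,n^{1-\alpha}$ from $p_c$ to $p$, and combining with a polynomial lower bound $\theta_n(p_c) \ge c\,n^{-\gamma}$ (consequence of the vanishing of $\theta(p_c)$ on slabs \cite{DST}, monotonicity, and a finite-size criterion), gives $\widetilde{L}(p) \le C(k)\,|p-p_c|^{-1/(1-\alpha)}$.

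The second stage bootstraps from $\theta_n(p) \ge 1/2$ at scale $\widetilde{L}(p)$ up to $\theta_n(p) \ge 1 - \tau$ at a polynomially larger scale. A block-and-gluing argument on slabs---combining horizontal crossings of overlapping translates of $B_n$ via short vertical connections that exploit the thickness $k$---produces $\theta_{2n}(p) \ge 1 - C\,(1 - \theta_n(p))^{\eta}$ for some $\eta = \eta(k) > 0$. Iterating $O(\log\log(1/\tau))$ times yields $\theta_m(p) \ge 1 - \tau$ at a scale $m \le C(\tau, k)\,\widetilde{L}(p)$. Combined with the first stage, this gives the claim $L_\tau(p) \le c_1\,|p-p_c|^{-c_2}$.

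\textbf{Main obstacle.} The chief difficulty is the sub-linear bound $\Xi_n(p) \le C(k)\,n^\alpha$ with $\alpha < 1$ near $p_c$. The trivial estimate $\Xi_n \le 2n$ yields no useful polynomial dependence on $|p - p_c|$; beating it requires exploiting both the sharpness of the phase transition on $\S^+_k$ and the slab's finite thickness through a careful block renormalization. A secondary difficulty is the bootstrap in the second stage, which, in the absence of RSW, must rely on slab-specific gluings that take advantage of the thickness direction.
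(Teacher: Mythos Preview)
Your OSSS/decision-tree route is a plausible alternative to the paper's approach, but the proof as written has a genuine gap at exactly the point you flag as the ``main obstacle''.

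The paper proceeds differently: it lower-bounds $\sum_{e}\P_p(e\text{ pivotal for }V(\slab{B}_N))$ by $c\,N^{c_5(\tau)}$ via local modifications (Aizenman--Grimmett style) that produce a vertex with three disjoint arms plus a closed cutset, then applies the BKR inequality together with a one-arm estimate $\P_p(0\leftrightarrow\partial\slab{B}_N)\le N^{-c_5(\tau)}$ valid for $N<L_\tau(p)$. Russo's formula and integration from $p_c$ to $p$ then give the polynomial bound. The one-arm estimate is obtained from the slab RSW/box-crossing machinery of Newman--Tassion--Wu \cite{tassion}: first one shows that crossing probabilities of $n\times 2n$ boxes stay bounded away from $1$ below $L_\tau(p)$ (Lemma~\ref{exchangingDirections}), and then a standard annuli argument gives the polynomial one-arm decay.

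Your argument needs essentially the same one-arm input to get $\Xi_n(p)\le C(k)n^\alpha$ with $\alpha<1$, but you attribute it to ``sharpness of the phase transition on $\S_k^+$ (applying the DRT machinery directly to slabs)''. This does not work. The DRT/OSSS argument yields sharpness---exponential decay below $p_c$ and a mean-field lower bound on $\theta$ above $p_c$---but it does \emph{not} produce polynomial one-arm bounds at or near criticality. Bounding $\Xi_n$ sublinearly for $p$ in a right-neighbourhood of $p_c$ and $n$ below the correlation length is a near-critical statement that requires RSW-type input; on slabs this is exactly what \cite{tassion} provides and what the paper uses. If you import that one-arm bound, your OSSS differential inequality would integrate to the desired conclusion, but as stated the derivation of $\Xi_n\le Cn^\alpha$ is unjustified. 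A secondary point: the lower bound $\theta_n(p_c)\ge c n^{-\gamma}$ is not a consequence of the absence of percolation at $p_c$ \cite{DST}; the relevant input is the uniform-in-$n$ box-crossing lower bound at $p_c$ from \cite{tassion}.
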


We observe that, by using RSW techniques (see  \cite{tassion} and the proof of Corollary 3.3 in \cite{CC}) together with the definition of $L_\tau(p)$, it follows that the crossing probability in \eqref{cl_def} remains above $1-\tau$ whenever $n>c_0L_{\tau}(p)$, for some constant $c_0>0$.

The proof of Theorem \ref{corr_length_2} involves showing that, with probability bounded away from zero, a box of side length $L_\tau(p)$ contains a vertex with three edge-disjoint open paths connecting it to the boundary of the box. One of these paths is separated from the other two by a closed cutset. To establish this, we combine the results from \cite{tassion} with localized modifications to specific configurations. This construction, together with the van den Berg, Kesten, and Reimer (BKR) inequality (see \cite{BK} and \cite{R}), provides a lower bound for the probability that an edge is pivotal to the crossing event of the box. Using this bound, we then apply the Margulis-Russo's formula to derive an estimate for the correlation length.

\subsection{Notation}\label{notation}
For each $A\subset \mathcal{V}(\S_k^+)$, let $\E(A)$ denote the set of edges with both endpoints in $A$. For $n\geq 1$, $A \subseteq \mathbb{Z}^2$, and $x\in\Z^2$, write $A_n(x) = 2nx+A+(n,n)$, and $\slab{A}_n(x)=A_n(x)\times\{0,...,k\}\subset \S_k$. For $x\in\mathbb{Z}^2$, let $B_n(x)=2nx+[-n,n]^2+(n,n)$ be the box of radius $n$ centered at $2nx+(n,n)$ in $\Z^2$. If $x=0$, we simply write $B_n$ instead of $B_n(0)$. 

An edge $e$ is said to be \first{open} if $\omega(e)=1$, otherwise it is said to be \first{closed}.  We write $u\sim v$ if $u$ and $v$ are neighbors, that is, if $\langle u,v \rangle\in\E(\S_k^+)$. A path of $(\mathcal{V}(\S_k^+), \E(\S_k^+))$ is an alternating sequence $x_0, e_0, x_1, e_1,\dots, e_{n-1},x_n$ of distinct vertices $x_i$ and edges $e_i=\langle x_i, x_{i+1}\rangle$; such a path has length $n$ and is said to connect $x_0$ to $x_n$. We call a path open if all of its edges are open. The event that $x_0$ is connected to $x_n$ by an open path is denoted by  $\{x_0 \longleftrightarrow x_n\}$. We write $\{x\longleftrightarrow \infty\}$ for the event that there exists an unbounded sequence $(x_n)\subset \mathcal{V}(\S_k^+)$ such that $x$ is connected by an open path to each one of the $x_n's$.

For each pair of paths $\gamma = (x_0,e_0,x_1,e_1,...,e_{n-1},x_n)$ and $\gamma' = (x'_0,e'_0,x'_1,...,e'_{n-1},x'_n)$ with $x_n=x'_0$, let $\gamma \oplus \gamma' = (x_0,e_0,...,x_n,e'_0,x'_1,...,e'_{n-1},x'_n)$ be the concatenation of the two paths. Denote by \begin{equation}\label{path_edgeset}
\mathfrak{E}(\gamma) = \{e_0,...,e_{n-1}\}
\end{equation}
the set of edges of the path $\gamma$.

For $A, B,C\subset \mathcal{V}(\S_k^+)$, we write $\{A\stackrel{C}{\longleftrightarrow}B\}$ for the event that some vertex of $A$ is connected to some vertex of $B$ using edges of $\E(C)$ only. Also, for each box $B = [a,b]\times[c,d]\times[0,k]$, let 
\begin{equation}\label{Hor_cross}
H(B)=\{\{a\}\times[c,d]\times[0,k]\stackrel{B}{\longleftrightarrow }\{b\}\times[c,d]\times[0,k]\},\end{equation}
\begin{equation}\label{ver_cross}
V(B)=\{[a,b]\times\{c\}\times[0,k]\stackrel{B}{\longleftrightarrow }[a,b]\times\{d\}\times[0,k]\},
\end{equation}
be the events where the box $B$ is crossed horizontally and vertically, respectively.

The distance between $u,v \in \mathcal{V}(S_k^+)$ is defined by $\|u-v\|=\sum_{i=1}^3|u_i-v_i|$. For every $A,B \subset \mathcal{V}(\S_k^+)$, write $d(A,B) = \min\{\|u-v\|: u\in A, v \in B\}$ for the distance between $A$ and $B$. For each set $B \subset \mathcal{V}(\S_k^+)$, let $$\frontA{B}:=\{u \in B: u\sim v \mbox{ for some } v\in \mathcal{V}(\S_k^+)\setminus B \}$$ 
be the vertex boundary of $B$.  
We abuse notation and also use $\frontA A$ for the vertex boundary of $A\subset\Z^2$, that is,
$$\frontA{A}:=\{u \in A: u\sim v \mbox{ for some } v\in \Z^2\setminus A \}.$$ 
\section{Proof overview and tools}\label{SectionOverview} 
In this section, we explain the main steps of the proof of Theorem \ref{mainTheorem}. Firstly, we characterize the notions of good and bad intervals that will be used below. Let $n\geq 1$, and define the intervals 
\begin{equation}\label{good_interval}
I^i_n = 2in+[0,2n),\,\, i \in \mathbb{Z}_+.
\end{equation}
Given an environment $\Lambda$ as in \eqref{renewalProcess}, let $r = |\Lambda\cap I^i_n|$ and consider the set $\Lambda\cap I^i_n=\{a_1,\dots,a_r\}$ with $a_j<a_{j+1}$, $j=1,\dots r-1$. Letting $a_0=2in$ and $a_{r+1}=2(i+1)n$, we define 
$$d_i = \max\left\{|a_j-a_{j-1}|: 1\leq j\leq r+1\right\}.$$

We say an interval $I^i_n$ is \first{$\lambda$-good} if $d_i\leq n^\lambda$, and \first{$\lambda$-bad} otherwise. For each fixed $\constDecaimento>1$, the following proposition shows that a given interval is $\lambda$-good with high probability for a suitable choice of $\lambda$, depending on $\constDecaimento$.
\begin{proposition}
\label{goodBlocksProposition}
Let $\Lambda$ be given by an $\constDecaimento$-renewal process with law $\nu_\constDecaimento$. If $\constDecaimento>1$ and $\lambda \in \left(\frac{1}{\constDecaimento},1\right)$, then
$$\nu_\constDecaimento(I^i_n \mbox{ is }\lambda\mbox{-good})\geq 1-3cn^{1-\constDecaimento \lambda},$$
for all $i \in \mathbb{Z}_+$ and all $n \in \mathbb{N}$.
\end{proposition}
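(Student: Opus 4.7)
My plan is to decompose the bad event $\{d_i > n^\lambda\}$ according to which of the $r+1$ gaps $|a_j - a_{j-1}|$ actually exceeds $n^\lambda$, then apply a union bound. The gaps split naturally into two \emph{edge gaps} — the left gap $a_1 - a_0$ (distance from $2in$ to the first arrival of $\Lambda$ in $I^i_n$) and the right gap $a_{r+1} - a_r$ — together with the \emph{interior gaps} $a_j - a_{j-1}$, $2 \leq j \leq r$, each of which is a genuine inter-arrival time $\xi_m$ for some index $m$. The two types will be estimated by different methods.

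For the edge gaps I would exploit the fact that the delay $U$ was chosen precisely so that $\eta = \{\eta_m\}_{m \geq 0}$ is a stationary point process on $\mathbb{N}$. By standard renewal theory, both the forward recurrence time at $2in$ and the backward recurrence time at $2(i+1)n$ have the same distribution as $U$, so each edge gap is stochastically dominated by $U$. Combining the tail bound on $\xi$ with $\sum_{k > m} k^{-\phi} \leq m^{1-\phi}/(\phi-1)$ gives
\begin{equation*}
\nu_\phi(U > n^\lambda) \;=\; \frac{1}{\mathbb{E}(\xi)} \sum_{k > n^\lambda} \P(\xi > k) \;\leq\; \frac{c\, n^{\lambda(1-\phi)}}{(\phi-1)\mathbb{E}(\xi)} \;\leq\; c\, n^{1-\phi\lambda},
\end{equation*}
where the last step uses $\phi > 2$, $\mathbb{E}(\xi) \geq 1$, and $n^{\lambda(1-\phi)} \leq n^{1-\phi\lambda}$ (which holds since $\lambda < 1$).

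For the interior gaps I would run a Campbell/first-moment argument exploiting that each $\xi_m$ (for $m \geq 1$) is independent of $\eta_{m-1}$. Markov's inequality gives
\begin{equation*}
\nu_\phi\bigl(\exists\, 2 \leq j \leq r: a_j - a_{j-1} > n^\lambda\bigr)
\;\leq\; \mathbb{E}\!\left[\sum_{m \geq 1} \mathbf{1}_{\{\eta_m \in I^i_n\}}\, \mathbf{1}_{\{\xi_m > n^\lambda\}}\right]
\;=\; \frac{|I^i_n|}{\mathbb{E}(\xi)}\, \P(\xi > n^\lambda) \;\leq\; \frac{2c\, n^{1-\phi\lambda}}{\mathbb{E}(\xi)},
\end{equation*}
where the equality is Campbell's theorem for the stationary process $\eta$, equivalently the identity $\mathbb{E}[|\Lambda \cap I^i_n|] = 2n/\mathbb{E}(\xi)$. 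Summing the three contributions and absorbing the prefactors $1/(\phi-1)$ and $2/\mathbb{E}(\xi)$ into the constant yields the claimed bound of order $3c\, n^{1-\phi\lambda}$.

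I do not expect any genuine obstacle here. The only slightly delicate ingredient is identifying the forward/backward recurrence-time laws with $U$ — a classical consequence of the equilibrium-delay construction of a stationary renewal process — together with careful bookkeeping of the numerical constant $3c$.
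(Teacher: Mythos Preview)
Your argument is correct and yields the stated bound, but the route differs from the paper's. You split the bad event into \emph{edge gaps} (controlled via the equilibrium recurrence-time law $U$) and \emph{interior gaps} (controlled by a Campbell/first-moment computation). The paper instead runs a single union bound over the \emph{location} $\ell$ of the arrival that precedes the large gap: it distinguishes $\ell<2in$ from $\ell\in I^i_n$ and in both cases uses only the renewal property to bound $\nu_\phi(\ell\in\Lambda,\ \xi_{[\ell]}>t)\le \P(\xi>t)$, then sums. The paper's version is marginally more elementary---it never needs the explicit distribution of $U$, nor Campbell's theorem---while your version makes the role of stationarity explicit and would port more directly to other stationary point processes. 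One small technical wrinkle: in your interior-gap display you write $\mathbf{1}_{\{\eta_m\in I^i_n\}}\mathbf{1}_{\{\xi_m>n^\lambda\}}$ but justify the factoring through the independence of $\xi_m$ from $\eta_{m-1}$. The clean fix is to use the indicator $\mathbf{1}_{\{\eta_{m-1}\in I^i_n\}}$ instead (still an upper bound for the interior-gap event, and now the factoring is immediate); alternatively your $\eta_m$-version is also an equality, but it rests on the Palm fact that the gap preceding a typical point has law $\xi$, which you did not state. Either way the displayed identity holds and the proof goes through.
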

\begin{proof}

It holds that $d_i>n^\lambda$ if and only if, there is $j \in \mathbb{Z}_+$ with $\eta_j \in [2in,2(i+1)n-n^\lambda)$ and $\xi_j>n^\lambda$ or with $\eta_j<2in$ and $\xi_j>|2in-\eta_j|+n^\lambda$. If $\ell\in \Lambda$, then $\ell=\eta_j$ for some $j\in\Z_+$, in which case we denote $\xi_{[\ell]}:=\xi_{j+1}$. An application of the union bound gives 
\begin{align*}
    \nu_\constDecaimento(I^i_n \mbox{ is }\lambda\mbox{-bad})&\leq
         \sum_{\ell=0}^{2in-1}\nu_\constDecaimento\left(\ell \in \Lambda,\,  \xi_{[\ell]}>2in-\ell+n^\lambda
        \right)+\sum_{\ell=2in}^{2(i+1)n}\nu_\constDecaimento\left(\ell \in \Lambda,\, \xi_{[\ell]}>n^\lambda\right)\\
       &\leq \sum_{\ell=0}^{2in-1}\nu_\constDecaimento\left(\xi>2in-\ell+n^\lambda\right)+\sum_{\ell=2in}^{2(i+1)n}\nu_\constDecaimento\left(\xi>n^\lambda\right)\\
       &\leq \sum_{\ell=n^\lambda}^{\infty}\nu_\constDecaimento\left(\xi>\ell\right)+cn^{1-\constDecaimento \lambda}
       \leq \sum_{\ell=n^\lambda}^{\infty}c\ell^{-\constDecaimento}+ cn^{1-\constDecaimento \lambda}\leq 3cn^{1-\constDecaimento \lambda}.
\end{align*}
\end{proof}
The first part of the proof of Theorem \ref{mainTheorem} consists of a coarse-graining argument, coupling the states of the blocks involved with a dependent percolation model in $\mathbb{Z}^2_+$. We begin by defining the sets and events that will be used in our construction.

 \begin{figure}[ht]
        \centering
        \resizebox{0.4\linewidth}{!}{
            \begin{tikzpicture}


                \draw[color=red!0,pattern=north west lines, pattern color=black!75] (-6.3,-3) rectangle (-6,3);
                \draw[color=red!0,pattern=north west lines, pattern color=black!75] (6,-3) rectangle (6.3,3);
                \draw[color=red!0,pattern=north west lines, pattern color=black!75] (-3,-6.3) rectangle (3,-6);
                \draw[color=red!0,pattern=north west lines, pattern color=black!75] (-3,6) rectangle (3,6.3);

                \draw[color=black, ultra thick] (-6,-6) rectangle (6,6);
                \draw[color=black,pattern=north west lines, pattern color=black!25, ultra thick] (-6,-6) rectangle (-3,-3);
                \draw[color=black,pattern=north west lines, pattern color=black!25, ultra thick] (3,3) rectangle (6,6);

                \node[align=left,color=black,scale=3] at (-7.5,0) {$LS_n$};
                \node[align=left,color=black,scale=3] at (7.5,0) {$RS_n$};
                \node[align=left,color=black,scale=3] at (0,7) {$TS_n$};
                \node[align=left,color=black,scale=3] at (0,-7) {$BS_n$};

                \node[align=center,color=black,scale=3] at (-4.5,-4.5){$LB_n$};
                \node[align=center,color=black,scale=3] at (4.5,4.5){$RB_n$};

                \node[align=center,color=black,scale=3] at (-3,3){$B_n$};

                \filldraw [black] (0,0) circle (2pt);
                \node[align=left,color=black,scale=2] at (0.5,0.5) {$0$};                

                \draw[color=black] (-6.3,3) -- (-5.7,3);
                \node[align=left,color=black,scale=2] at (-7,3) {$\frac{n}{2}$};                
                \draw[color=black] (-6.3,-3) -- (-5.7,-3);
                \node[align=left,color=black,scale=2] at (-7,-3) {$-\frac{n}{2}$};                

                \draw[color=black] (6.3,3) -- (5.7,3);
                \node[align=left,color=black,scale=2] at (7,3) {$\frac{n}{2}$};                
                \draw[color=black] (6.3,-3) -- (5.7,-3);
                \node[align=left,color=black,scale=2] at (7,-3) {$-\frac{n}{2}$};

                \draw[color=black] (-3,-6.3) -- (-3,-5.7);
                \node[align=left,color=black,scale=2] at (-3,-7) {$-\frac{n}{2}$};                
                \draw[color=black] (3,-6.3) -- (3,-5.7);
                \node[align=left,color=black,scale=2] at (3,-7) {$\frac{n}{2}$};

                \draw[color=black] (-3,6.3) -- (-3,5.7);
                \node[align=left,color=black,scale=2] at (-3,7) {$-\frac{n}{2}$};                
                \draw[color=black] (3,6.3) -- (3,5.7);
                \node[align=left,color=black,scale=2] at (3,7) {$\frac{n}{2}$};

                \draw[color=black] (-6,-6.3) -- (-6,-5.7);
                \node[align=left,color=black,scale=2] at (-6,-7) {$-n$};                
                \draw[color=black] (6,-6.3) -- (6,-5.7);
                \node[align=left,color=black,scale=2] at (6,-7) {$n$};

                \draw[color=black] (-6.3,6) -- (-5.7,6);
                \node[align=left,color=black,scale=2] at (-7,6) {$n$};
            \end{tikzpicture}
        }
        \caption{Sketch of the sets $B_n,LB_n,RB_n, LS_n,RS_n,BS_n$ and $TS_n$.}
        \label{fig:boxes}
    \end{figure}
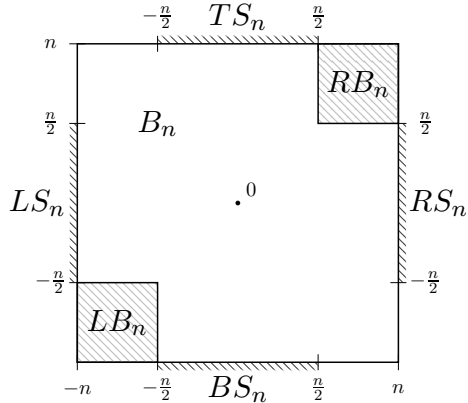 
\FloatBarrier

 Consider the following subsets of $B_n$: $LB_n = \left[-n,-\frac{n}{2}\right]^2$, and  $RB_n = \left[\frac{n}{2},n\right]^2$. Also, write $LS_n = \{-n\}\times\left[-\frac{n}{2},\frac{n}{2}\right]$, $RS_n = \{n\}\times\left[-\frac{n}{2},\frac{n}{2}\right]$, $BS_n = \left[-\frac{n}{2},\frac{n}{2}\right]\times \{-n\}$, and $TS_n =\left[-\frac{n}{2},\frac{n}{2}\right]\times\{n\}$. See Figure \ref{fig:boxes} for a sketch of these sets.
   
Define also the rectangles $HR_n = [-n,3n]\times\left[-\frac{n}{2},\frac{n}{2}\right]$ and  $VR_n=\left[-\frac{n}{2},\frac{n}{2}\right]\times[-n,3n]$. Finally, write
\begin{align*}
D_n(x)&\coloneqq \left\{\mathbf{LB}_n(x) \overset{\mathbf{B}_n(x)}{\looongleftrightarrow} \mathbf{RB}_n(x)\right\} ,\\
H_n(x)&\coloneqq \left\{\slab{LS}_n(x) \overset{\slab{HR}_n(x)}{\looongleftrightarrow} \slab{RS}_n(x+(1,0))\right\},\\
V_n(x)&\coloneqq \left\{\slab{BS}_n(x) \overset{\slab{VR}_n(x)}{\looongleftrightarrow} \slab{TS}_n(x+(0,1))\right\}.     
\end{align*}
See Figure \ref{fig:events} for a two-dimensional sketch of these events.
        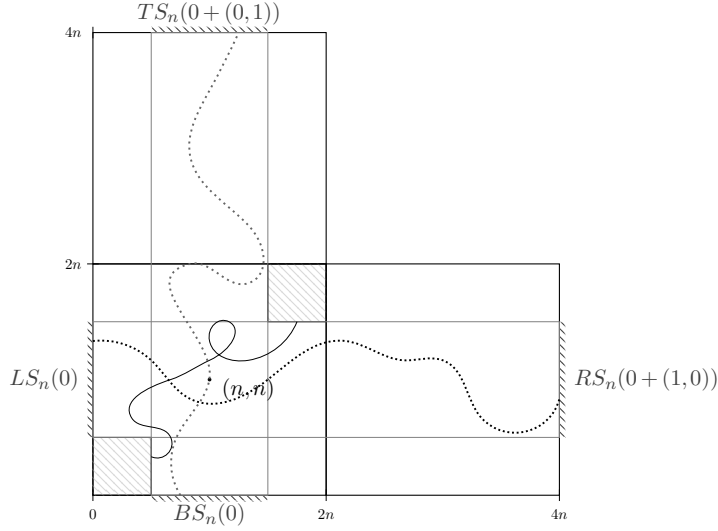
\begin{figure}[ht]
        \centering
        \resizebox{0.6\linewidth}{!}{
            \begin{tikzpicture}


                \draw[color=red!0,pattern=north west lines, pattern color=black!75] (-6.3,-3) rectangle (-6,3);
                \draw[color=red!0,pattern=north west lines, pattern color=black!75] (18,-3) rectangle (18.3,3);
                \draw[color=red!0,pattern=north west lines, pattern color=black!75] (-3,-6.3) rectangle (3,-6);
                \draw[color=red!0,pattern=north west lines, pattern color=black!75] (-3,18) rectangle (3,18.3);

                \draw[color=black, ultra thick] (-6,-6) rectangle (6,6);
                \draw[color=black,pattern=north west lines, pattern color=black!15, ultra thick] (-6,-6) rectangle (-3,-3);
                \draw[color=black,pattern=north west lines, pattern color=black!15, ultra thick] (3,3) rectangle (6,6);
                
                \draw[color=black, ultra thick] (-6,6) rectangle (6,18);
                \draw[color=black, ultra thick] (6,-6) rectangle (18,6);

                \draw[color=black!50] (-3,-6) rectangle (3,18);
                \draw[color=black!50] (-6,-3) rectangle (18,3);

                \node[align=left,color=black!75,scale=3] at (-8.5,0) {$LS_n(0)$};
                \node[align=left,color=black!75,scale=3] at (22.5,0) {$RS_n(0+(1,0))$};
                \node[align=left,color=black!75,scale=3] at (0,19) {$TS_n(0+(0,1))$};
                \node[align=left,color=black!75,scale=3] at (0,-7) {$BS_n(0)$};

                \filldraw [black] (0,0) circle (2pt);
                \node[align=left,color=black,scale=3] at (2,-0.5) {$(n,n)$};                

                \draw[color=black] (-6,-6.3) -- (-6,-5.7);
                \node[align=left,color=black,scale=2] at (-6,-7) {$0$};                
                \draw[color=black] (6,-6.3) -- (6,-5.7);
                \node[align=left,color=black,scale=2] at (6,-7) {$2n$}; 
                \draw[color=black] (18,-6.3) -- (18,-5.7);
                \node[align=left,color=black,scale=2] at (18,-7) {$4n$};
                \draw[color=black] (-6,18.3) -- (-6,17.7);
                \node[align=left,color=black,scale=2] at (-7,18) {$4n$};
                
                \draw[color=black] (-6.3,6) -- (-5.7,6);
                \node[align=left,color=black,scale=2] at (-7,6) {$2n$};
                
                \draw [line width=1mm,color=black!60, loosely dashed] (-1.5 ,-6) to [ curve through ={(-1.7,-3).. (0,0.5)..(-1,6) . . (2,5) . . (2,8) . . (-1,11.6) . . (0,15)  }] (1.5,18);

                \draw [line width=1mm,color=black,loosely dotted] (-6 ,2) to [ curve through ={(-3,1).. (-1,-1)..(3,0)..(7,2)..(10,1)..(12,1)..(14,-2)}] (18,-1);

                \draw [very thick,color=black] (-3,-4) to [ curve through ={(-2,-3)..(-4,-2)..(-2,0)..(0,1)..(1,3)..(0,2)..(2,1)}] (4.5,3);

            \end{tikzpicture}
        }
        \caption{Two-dimensional sketch of $D_n(0)$ (continuous line), $H_n(0)$ (dotted dark line) and $V_n(0)$ (dashed light line).}
        \label{fig:events}
    \end{figure} 
    \FloatBarrier

Let $\mathcal{J}_n=\{i \in \mathbb{Z}_+: I^i_n \mbox{ is }\lambda\mbox{-good}\}$,  $\mathcal{U}_n = \bigcup_{i \in \mathcal{J}_n} I^i_n$, and write $U_n = \mathcal{U}_n \times \mathbb{Z}$. We shall associate to every edge $\langle x,y \rangle \in \mathcal{E}(\mathbb{Z}_+^2)$ the set $\slab{B}_n(x)\cup \slab{B}_n(y)\subseteq \S^+_k$. We say $\langle x,y \rangle \in \mathcal{E}(\mathbb{Z}_+^2)$ is $\lambda$-\first{favored} if $B_n(x)\cup B_n(y) \subseteq U_n$. Otherwise, we say $\langle x,y \rangle$ is $\lambda$-\first{unfavored}. Hereafter, when there is no possibility of confusion, we will omit $\lambda$ from the notation.  

\begin{definition}\label{pathOrdering}Let $\prec$ be the lexicographical ordering of the vertices of $\S^+_k$. We introduce an ordering $\leq$ on the set of self-avoiding paths in $\S_k^+$ as follows. Given two self-avoiding paths $\gamma = (\gamma_0,...,\gamma_s)$ and $\gamma' = (\gamma'_0,...,\gamma'_t)$ in $\S_k^+$, we say $\gamma\leq \gamma'$ if at least one of the following conditions holds:
\begin{enumerate}
\label{ordering}
    \item $s<t$ and $\gamma_j = \gamma'_j$, for all $j \leq s$,
    \item $\gamma_0\prec \gamma'_0$,
    \item there exists $k < \min\{s,t\}\mbox{ such that } \gamma_j = \gamma'_j$, for all $j \leq k$, and $\gamma_{k+1}\prec\gamma'_{k+1}$.
\end{enumerate}
\end{definition}     

For each vertex $x \in \mathbb{Z}^2_+$, let $\Xi_n(x)$ be the set of all self-avoiding paths joining $\slab{LB}_n(x)$ to $\slab{RB}_n(x)$ inside $\slab{B}_n(x)$. If $\omega \in D_n(x)$, let
\begin{equation}\label{DefinitionGamma}\Gamma_x^n(\omega)\coloneqq \min\{\gamma \in \Xi_n(x): \gamma \mbox{ open}\},\end{equation}
that is, the smallest open path in $\Xi_n(x)$. We write $\Gamma_x^n(\omega) = \emptyset$ when $\omega \notin D_n(x)$. 

For every edge $f=\langle x,y \rangle$ in $\mathcal{E}(\mathbb{Z}^2_+)$, let $R(f) = HR_n(x)$ if $f = \langle x, x+(1,0) \rangle$, and $R(f) = VR_n(x)$ if $f = \langle x, x+(0,1) \rangle$. Define also the set of ``corner edges" (see Figure \ref{fig:C(e)})  
\begin{small}
    \begin{equation}\label{front}C(f) \coloneqq \E\left[\frontA{\slab{LB}_n(x)}\cup\frontA{\slab{RB}_n(x)}\cup\frontA{\slab{LB}_n(y)}\cup\frontA{\slab{RB}_n(y)}\right]\setminus\E\left[\frontA{\slab{B}_n(x)}\cup \frontA{\slab{B}_n(y)}\right].
\end{equation}
\end{small}

    \begin{figure}[ht]
        \centering
        \resizebox{0.6\linewidth}{!}{
            \begin{tikzpicture}
             \draw[very thick, black!15](4,0) rectangle (20,2);
             \draw[very thick, black!15](12,0) -- (12,2);             
             \draw[very thick, black!15](0,4) rectangle (16,6);
             \draw[very thick, black!15](8,4) -- (8,6);

             \draw[very thick, black!15] (4,0)--(0,4);
             \draw[very thick, black!15] (4,2)--(0,6);
             \draw[very thick, black!15] (12,0)--(8,4);
             \draw[very thick, black!15] (12,2)--(8,6);
             \draw[very thick, black!15] (16,4)--(20,0);
             \draw[very thick, black!15] (20,2)--(16,6);            

            \draw[very thick, black!0,pattern=grid,opacity=0.5] (3,1) rectangle (5,3);
            \draw[very thick, black!0,pattern=north west lines,opacity=0.3](5,3) --(6,2) --(6,0) --(5,1) -- cycle;
            \draw[very thick, black!0,pattern=vertical lines,opacity=0.3](5,3) --(6,2) --(6,0) --(5,1) -- cycle;

            \draw[very thick, black!0,pattern=grid,opacity=0.5] (11,1) rectangle (13,3);
            \draw[very thick, black!0,pattern=north west lines,opacity=0.3](13,3) --(14,2) --(14,0) --(13,1) -- cycle;
            \draw[very thick, black!0,pattern=vertical lines,opacity=0.3](13,3) --(14,2) --(14,0) --(13,1) -- cycle;

            \draw[very thick, black!0,pattern=grid,opacity=0.5] (7,3) rectangle (9,5);
            \draw[very thick, black!0,pattern=north west lines,opacity=0.3](7,3) --(7,5) --(6,6) --(6,4) -- cycle;
            \draw[very thick, black!0,pattern=vertical lines,opacity=0.3](7,3) --(7,5) --(6,6) --(6,4) -- cycle;

            \draw[very thick, black!0,pattern=grid,opacity=0.5] (15,3) rectangle (17,5);
            \draw[very thick, black!0,pattern=north west lines,opacity=0.3](15,3) --(15,5) --(14,6) --(14,4) -- cycle;
            \draw[very thick, black!0,pattern=vertical lines,opacity=0.3](15,3) --(15,5) --(14,6) --(14,4) -- cycle;

            \node[scale=2] at (9,0.5) {$\slab{B}_n(x)$};
            \node[scale=2] at (17,0.5) {$\slab{B}_n(y)$};
            \end{tikzpicture}
        }
        \caption{A sketch of the sets $\slab{B}_n$ and $C(f)$.}
        \label{fig:C(e)}
    \end{figure}
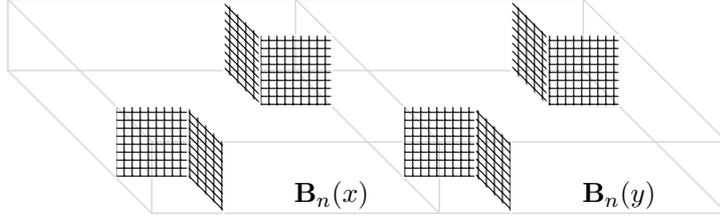    
For each $f=\langle x,y \rangle \in \mathcal{E}(\mathbb{Z}^2_+)$, define 
\begin{equation}\label{DefinitionA}A_n(f) \coloneqq D_n(x)\cap D_n(y)\cap \left\{\Gamma_x^n(\omega) \overset{\slab{R}(f)}{\looongleftrightarrow} \Gamma_y^n(\omega)\right\}.\end{equation}
One can think of a renormalized favored edge $f$ as being open whenever the event $A_n(f)$ occurs. The next proposition states that the probability of the event $A_n(f)$ can be made arbitrarily high provided $n$ is large enough and the parameters are chosen appropriately. 

\begin{proposition}
    \label{theoremEdge}
For each $\constNossoBeta<1$, there exists $\lambda=\lambda(\constNossoBeta)>0$ such that the following holds: for every $\varepsilon>0$, there exists $\delta_n = \delta(n, \varepsilon, \lambda)>0$ such that, for $n$ large enough,
        \begin{equation}\label{eq:Prop2Equation}\P^\Lambda_{p_c-\delta_n,p_c+\varepsilon}(A_n(f)) \geq \constNossoBeta, \mbox{ if $f$ is }\lambda\mbox{-\textit{favored}}.\end{equation}
\end{proposition}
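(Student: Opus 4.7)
The event $A_n(f) = D_n(x) \cap D_n(y) \cap \{\Gamma_x^n \overset{\slab{R}(f)}{\looongleftrightarrow} \Gamma_y^n\}$ is an intersection of increasing events, so my plan is to bound each piece below by $\rho^{1/3}$ and combine them via the FKG inequality. The quantitative engine is Theorem~\ref{corr_length_2}: from $L_\tau(p) \leq c_1 (p - p_c)^{-c_2}$ one reads off that $L_\tau(p_c + \varepsilon)$ is a finite constant $N(\varepsilon, \tau)$ independent of $n$, and this will drive every crossing estimate below.

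For the box crossing $D_n(x)$ (the case of $D_n(y)$ being symmetric), the $\lambda$-favored hypothesis is essential: every gap between consecutive enhanced columns inside $\slab{B}_n(x) \cup \slab{B}_n(y)$ has length at most $n^\lambda$. I would build an open path from $\slab{LB}_n(x)$ to $\slab{RB}_n(x)$ by alternating long vertical segments inside enhanced columns (at parameter $q = p_c + \varepsilon$, which host open runs at linear density) with $\mathcal{O}(n^{1-\lambda})$ short horizontal bridges, each of length at most $n^\lambda$, spanning the gaps between consecutive enhanced columns. The bridges live in $n^\lambda$-scale windows whose interiors carry only edges at the background parameter $p_c - \delta_n$; choosing $\delta_n = n^{-\alpha}$ with $\alpha$ large enough that $n^\lambda$ lies inside the near-critical regime of $\P_{p_c - \delta_n}$ keeps each bridge tractable. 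A local crossing estimate --- a sprinkling coupling using the enhanced columns that pierce the bridge window to deliver a slightly supercritical parameter, combined with Theorem~\ref{corr_length_2} at that parameter --- then yields a per-bridge survival probability of at least $1 - \tau n^{\lambda-1}$, so FKG chains the $\mathcal{O}(n^{1-\lambda})$ bridges into a crossing of $\slab{B}_n(x)$ with probability $\geq 1 - \tau$. The threshold $\lambda(\rho)$ comes out as any value strictly below $\alpha/c_2$.

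The gluing event $\{\Gamma_x^n \overset{\slab{R}(f)}{\looongleftrightarrow} \Gamma_y^n\}$ is handled by the same bridging construction, now performed inside $\slab{R}(f)$. Because $\Gamma_x^n$ and $\Gamma_y^n$ each cross the middle horizontal band of their macro-box, any side-to-side open crossing of $\slab{R}(f)$ automatically meets both. The minimality of the $\Gamma$'s under the ordering from Definition~\ref{pathOrdering} is crucial: each $\Gamma$ is measurable with respect to a one-sided exploration of its macro-box that does not reveal the interior of $\slab{R}(f)$ nor the corner edges $C(f)$ from \eqref{front}, so conditionally on the $\Gamma$'s these unexplored edges still form a fresh percolation and the previous construction applies. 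The corner edges $C(f)$ provide an independent sprinkle that bonds the fresh crossing in $\slab{R}(f)$ to the $\Gamma$'s at the shared boundary, producing the link by another FKG step.

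The hard part will be the bridge estimate: showing that a region of scale $n^\lambda$, with most edges at the subcritical value $p_c - \delta_n$ but traversed by a bounded number of enhanced columns, supports side-to-side crossings with probability $1 - o(n^{\lambda - 1})$. This forces a tight balance among $\delta_n = n^{-\alpha}$, $\lambda$, and the exponent $c_2$ of Theorem~\ref{corr_length_2}: schematically, one needs $n^\lambda \leq L_{\tau/2}(p_c + \eta)$ for a sprinkled $\eta$ extracted from the enhanced columns passing through the window, which demands $\lambda c_2 < \alpha$. Reconciling this with the constraint $\lambda > 1/\phi$ from Proposition~\ref{goodBlocksProposition} --- needed so that $\lambda$-good intervals are overwhelmingly likely --- is what determines the threshold $\phi_0(k)$ that eventually appears in Theorem~\ref{mainTheorem}.
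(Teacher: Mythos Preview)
Your bridge-by-bridge construction has a fatal gap at exactly the step you flag as hard. The horizontal bridges span the gaps \emph{between} consecutive enhanced columns, so by definition no enhanced columns pierce their interior; every edge in a bridge window sits at the genuinely subcritical parameter $p_c-\delta_n$. There is nothing to sprinkle with, and near-critical estimates on the subcritical side give crossing probabilities bounded \emph{away from} $1$ (uniformly in the scale, as long as that scale is below the correlation length), not $1-o(n^{\lambda-1})$. Chaining $n^{1-\lambda}$ such bridges via FKG therefore yields a probability that decays to $0$, not one that tends to $1$. The paper circumvents this entirely: rather than building a path, it uses an Aizenman--Grimmett differential argument (Lemma~\ref{aizenmanGrimmettStep}) comparing $\partial_p$ and $\partial_q$ of the crossing probability. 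The point is that if an ordinary edge $e$ is pivotal for $H(R_t)$, then with polynomially-controlled cost one can reroute pivotality to a nearby enhanced edge (distance $\le n^\lambda$ by $\lambda$-goodness), yielding $\partial_p\P^\Lambda_{p,q}(H(R_t))\le c\,n^{\lambda c_{15}}\partial_q\P^\Lambda_{p,q}(H(R_t))$. Integrating this shows $\P^\Lambda_{p_c-\delta_n,p_c+\varepsilon}(H(R_t))\ge \P_{p_n}(H(R_t))$ for a \emph{homogeneous} slightly supercritical $p_n=p_c+cn^{-\lambda c_{15}}$; only then does Theorem~\ref{corr_length_2} enter, to show $n\ge L_\tau(p_n)$ for $\lambda$ small.

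Your gluing step also has a gap. The event $\{\Gamma_x^n\leftrightarrow\Gamma_y^n\}$ is not increasing, since opening edges can change which path is minimal, so the FKG decomposition you propose does not apply. Moreover, $\Gamma_x^n$ is the minimal crossing of the full box $\slab{B}_n(x)$, which overlaps substantially with $\slab{R}(f)=\slab{HR}_n(x)$; there is no one-sided exploration in a slab that determines $\Gamma_x^n$ without revealing edges inside $\slab{R}(f)$. This is precisely why the paper invokes the slab Gluing Lemma (Lemma~\ref{GluingLemmaGenerico}, adapted from \cite{tassion}) rather than a naive conditional-independence argument, and why the resulting bound in Lemma~\ref{gluingLemmaEdge} involves the nonlinear function $h_3$ rather than a product.
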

\begin{remark}
    \label{RemarkEscolhaBeta}
    To prove Theorem \ref{mainTheorem}, we will take $\constNossoBeta_0$ as in \eqref{DefBeta0}, $\lambda=\lambda(\constNossoBeta_0,k)$ and let $\constDecaimento_0(k)>1/{\lambda}$. 
\end{remark}

The multiscale scheme we describe below requires that favored edges are open with high probability, while merely requiring that unfavored edges are open with positive probability. With this in mind, we say an unfavored edge is open if
\begin{equation}\label{unfav_open}
A_n^*(f) \coloneqq A_n(f)\cap \{\omega(e) = 1 \mbox{ for all } e \in C(f)\}
\end{equation}
occurs. Note that the event in \eqref{unfav_open} is increasing, allowing the use of the FKG inequality.

The proof of Proposition \ref{theoremEdge}, which is postponed to Section \ref{boundsEdge}, builds on the approach used in Proposition 2 of \cite{Brochette}. The most complex aspect of the proof involves a thorough understanding of near-critical percolation, which is challenging due to the current lack of such understanding on slabs. A vital part of the proof of Proposition 2 in \cite{Brochette} involves comparing the crossing probabilities of ``good'' renormalized edges with those in a homogeneous process at parameter $p_n = p_c(\Z^2)+q_n$, where $q_n$ approaches zero. For slabs, the desired outcome (with $p_c(\S^+_k)$ instead of $p_c(\Z^2)$) can be achieved for favored edges through successive applications of the Gluing Lemma, as detailed in Lemma \ref{GluingLemmaGenerico}. This is complemented by the Aizenman-Grimmett method to compare pivotality probabilities (e.g., \cite{AizenmannGrimmett} and \cite{Brochette}), which requires a highly nontrivial adaptation to our specific context.

The next step is to show that we are operating at length scales beyond the correlation length for $p_n$. Theorem \ref{corr_length_2} provides an upper bound for $L_{\tau}(p)$, and Section 3 is dedicated solely to its proof.

Once Theorem \ref{corr_length_2} is established, we will prove that $n\geq L_\tau(p_n)$ for sufficiently large $n$, thereby indicating that the probabilities of the events $D_n$, $H_n$, and $V_n$ can be made arbitrarily close to one. Consequently, by applying the Gluing Lemma, we will be able to prove Proposition \ref{theoremEdge}.

The construction with block variables $A_n(f)$ and $A_n^*(f)$ induces a dependent percolation model as follows: to every percolation configuration $\omega \in \{0,1\}^{\mathcal{E}(\S^+_k)}$ associate a configuration $\sigma_n(\omega) \in \{0,1\}^{\mathcal{E}(\mathbb{Z}_+^2)}$ according to the following rule:
\begin{equation}\label{BlockConstruction}
    \sigma_n(\omega)(f) = \left\{\begin{array}{ll}
        1 &\mbox{if }f\mbox{ is favored and } \omega \in A_n(f),\\
        1 &\mbox{if }f\mbox{ is unfavored and } \omega \in A_n^*(f),\\
        0 &\mbox{otherwise.}
    \end{array}\right.
\end{equation}

Let $\mathbb{Q}^{\Lambda,n}_{p,q} = \P^\Lambda_{p,q} \circ \sigma_n^{-1}$ denote the percolation measure in $(\mathbb{Z}_+^2, \mathcal{E}(\mathbb{Z}_+^2))$ induced by $\sigma_n$. Observe that $\mathbb{Q}^{\Lambda,n}_{p,q}$ is 1-dependent and, if $\sigma_n(\omega) \in \{o\longleftrightarrow \infty\}$, then $\omega \in \{o \longleftrightarrow \infty\}$ (see Figure \ref{fig:RenormalizacaoTotal}). Therefore, it suffices to show that 
\begin{equation}
    \inf_{m}\{\mathbb{Q}^{\Lambda,n}_{p_c-\delta,p_c+\varepsilon}(o\longleftrightarrow \frontA B_m)\}>0.
    \label{endMultiscale}
\end{equation}
    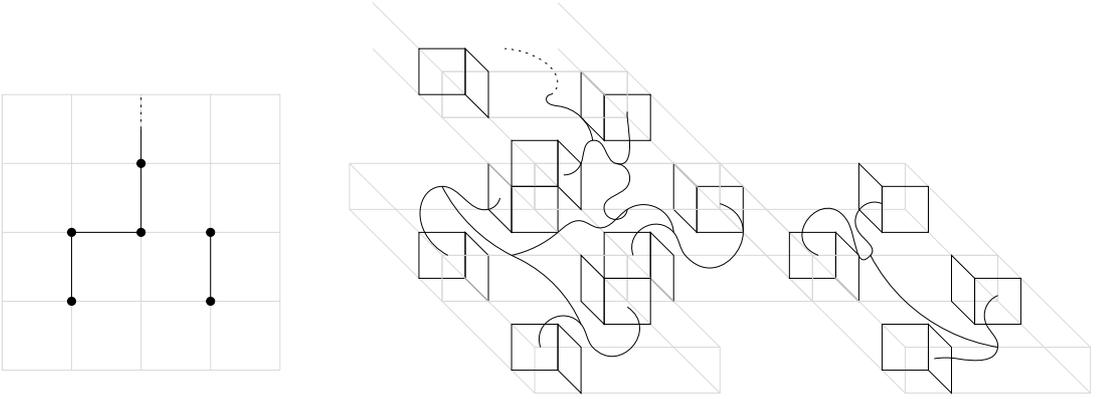
\begin{figure}[ht]
        \centering
        \resizebox{0.9\linewidth}{!}{
            \begin{tikzpicture}
                \draw[color=black!15, step=3](-15,-3) grid (-3,9);
                \filldraw[color=black] (-12,0) circle (5pt);
                \filldraw[color=black] (-12,3) circle (5pt);
                \filldraw[color=black] (-9,3) circle (5pt);
                \filldraw[color=black] (-9,6) circle (5pt);
                \filldraw[color=black] (-6,3) circle (5pt);
                \filldraw[color=black] (-6,0) circle (5pt);

                \draw[black](-12,0) -- (-12,3)--(-9,3)--(-9,6);
                \draw[black](-6,0) -- (-6,3);
                \draw[black](-9,6)--(-9,7.5);
                \draw[black,loosely dashed](-9,7.5)--(-9,9);

            \draw[very thick, black!15](4,0) rectangle (12,2);
            \draw[very thick, black!15](0,4) rectangle (8,6);
            \draw[very thick, black!15] (4,0)--(0,4);
            \draw[very thick, black!15] (4,2)--(0,6);
            \draw[very thick, black!15] (12,0)--(8,4);
            \draw[very thick, black!15] (12,2)--(8,6);
            \draw[very thick,opacity=0.2] (3,1) rectangle (5,3);
            \draw[very thick, opacity=0.2](5,3) --(6,2) --(6,0) --(5,1) -- cycle;
            \draw[very thick, black, opacity=0.2] (7,3) rectangle (9,5);
            \draw[very thick, black, opacity=0.2](7,3) --(7,5) --(6,6) --(6,4) -- cycle;

            \draw[very thick, black!15](12,0) rectangle (20,2);
            \draw[very thick, black!15](8,4) rectangle (16,6);
            \draw[very thick, black!15] (12,0)--(8,4);
            \draw[very thick, black!15] (12,2)--(8,6);
            \draw[very thick, black!15] (16,4)--(20,0);
            \draw[very thick, black!15] (20,2)--(16,6);                        
            \draw[very thick, black, opacity=0.2] (11,1) rectangle (13,3);
            \draw[very thick, black, opacity=0.2](13,3) --(14,2) --(14,0) --(13,1) -- cycle;
            \draw[very thick, black, opacity=0.2] (15,3) rectangle (17,5);
            \draw[very thick, black, opacity=0.2](15,3) --(15,5) --(14,6) --(14,4) -- cycle;

            \draw[very thick, black!15](8,4) rectangle (16,6);
            \draw[very thick, black!15](4,8) rectangle (12,10);
            \draw[very thick, black!15] (8,4)--(4,8);
            \draw[very thick, black!15] (8,6)--(4,10);
            \draw[very thick, black!15] (12,8)--(16,4);
            \draw[very thick, black!15] (16,6)--(12,10);
            \draw[very thick, black, opacity=0.2] (7,5) rectangle (9,7);
            \draw[very thick, black, opacity=0.2](9,7) --(10,6) --(10,4) --(9,5) -- cycle;
            \draw[very thick, black, opacity=0.2] (11,7) rectangle (13,9);
            \draw[very thick, black, opacity=0.2](11,7) --(11,9) --(10,10) --(10,8) -- cycle;

            \draw[very thick, black!15](8,-4) rectangle (16,-2);
            \draw[very thick, black!15](4,0) rectangle (12,2);
            \draw[very thick, black!15] (8,-4)--(4,0);
            \draw[very thick, black!15] (8,-2)--(4,2);
            \draw[very thick, black!15] (16,-4)--(12,0);
            \draw[very thick, black!15] (16,-2)--(12,2);
            \draw[very thick,opacity=0.2] (7,-3) rectangle (9,-1);
            \draw[very thick, opacity=0.2](9,-1) --(10,-2) --(10,-4) --(9,-3) -- cycle;
            \draw[very thick, black, opacity=0.2] (11,-1) rectangle (13,1);
            \draw[very thick, black, opacity=0.2](11,-1) --(11,1) --(10,2) --(10,0) -- cycle;

            \draw[very thick, black!15](20,0) rectangle (28,2);
            \draw[very thick, black!15](16,4) rectangle (24,6);
            \draw[very thick, black!15] (20,0)--(16,4);
            \draw[very thick, black!15] (20,2)--(16,6);
            \draw[very thick, black!15] (24,4)--(28,0);
            \draw[very thick, black!15] (28,2)--(24,6);                        
            \draw[very thick, black, opacity=0.2] (19,1) rectangle (21,3);
            \draw[very thick, black, opacity=0.2](21,3) --(22,2) --(22,0) --(21,1) -- cycle;
            \draw[very thick, black, opacity=0.2] (23,3) rectangle (25,5);
            \draw[very thick, black, opacity=0.2](23,3) --(23,5) --(22,6) --(22,4) -- cycle;

            \draw[very thick, black!15](24,-4) rectangle (32,-2);
            \draw[very thick, black!15](20,0) rectangle (28,2);
            \draw[very thick, black!15] (24,-4)--(20,0);
            \draw[very thick, black!15] (24,-2)--(20,2);
            \draw[very thick, black!15] (28,0)--(32,-4);
            \draw[very thick, black!15] (32,-2)--(28,2);
            \draw[very thick, black, opacity=0.2] (23,-3) rectangle (25,-1);
            \draw[very thick, black, opacity=0.2](25,-1) --(26,-2) --(26,-4) --(25,-3) -- cycle;
            \draw[very thick, black, opacity=0.2] (27,-1) rectangle (29,1);
            \draw[very thick, black, opacity=0.2](27,-1) --(27,1) --(26,2) --(26,0) -- cycle;

            \draw[very thick, black!15](4,8) rectangle (12,10);
            \draw[very thick, black!15] (4,8)--(1,11);
            \draw[very thick, black!15] (4,10)--(1,13);
            \draw[very thick, black!15] (9,11)--(12,8);
            \draw[very thick, black!15] (12,10)--(9,13);
            \draw[very thick, black, opacity=0.2] (3,9) rectangle (5,11);
            \draw[very thick, black, opacity=0.2](5,11) --(6,10) --(6,8) --(5,9) -- cycle;

            \draw [very thick,color=black] (8.25,-2) to [ curve through ={(10,-1)..(10.5,-2)}] (12,-0.25);
            \draw [very thick,color=black] (4.25,2) to [ curve through ={(3.25,3)..(4,5)..(6,4)}] (6.5,4.5);
            \draw [very thick,color=black] (12.25,2) to [ curve through ={(14,3)..(14.5,2)}] (16,4.25);
            \draw [very thick,color=black] (20.25,2) to [ curve through ={(21.5,3.5)..(22.5,2)..(22,3)}] (23,4.25);
            \draw [very thick,color=black] (25.25,-2.5) to [ curve through ={(26.25,-2.5)..(28,-2)..(27.5,-1)}] (28,0.25);
            \draw [very thick,color=black] (9.25,5.5) to [ curve through ={(10,6)..(10.5,7)..(11.5,6)..(12,8)}] (12,8.25);

            \draw [very thick,color=black] (10,-1) to [ curve through ={(7.4,1.8)..(7,2)}] (4,5);
            \draw [very thick,color=black] (7,2) to [ curve through ={(9,3)..(10,3.5)..(10.75,3.2)..(12,4)}] (14,3);
            \draw [very thick,color=black] (12,4) to [ curve through ={(11,4)..(12,5)}] (11.5,6);
            \draw [very thick,color=black] (28,-2) to [ curve through ={(24,0)}] (22.5,2);
            \draw [very thick,color=black] (10.5,7) to [ curve through ={(9,8.5)..(8.5,8.8)}] (8.7,9);
            \draw [very thick,color=black,loosely dashed] (8.7,9) to [ curve through ={(8.9,9.2)(8.3,10.5)}] (6.7,11);

            \end{tikzpicture}
        }
        \caption{The renormalized percolation and the original equivalent.}
        \label{fig:RenormalizacaoTotal}
    \end{figure}    

To prove \eqref{endMultiscale}, we will first show that, restricted to each finite box $B_m$, a randomized version of $(\sigma_n(f))_{f\in\E(\Z_+^2)}$ stochastically dominates an independent Bernoulli percolation model on $\mathbb{Z}_+^2$. This is the content of the next proposition.
\begin{proposition}\label{TeoDominacao}
There is a function $g:[0,1]\to[0,1]$, with $\lim_{x \to 1}g(x)=1$, and a constant $\constNossoBeta_1>0$, such that the following holds: 
let $0<\constNossoMu<\tfrac{1}{2}$, $m\geq 1$, $\{Z_f\}_{f \in \mathcal{E}(B_m)}\overset{i.i.d.}{\sim} Ber(1-\constNossoMu)$, and $W_f = \sigma_n(f)Z_f$. For all $\constNossoBeta>\constNossoBeta_1$, if
$$\mathbb{Q}^{\Lambda,n}_{p,q}(W_f=1)>\constNossoBeta,\; \mbox{ for }f\mbox{ favored},$$
then the random variables $\{W_f\}_{f \in \mathcal{E}(B_m)}$ dominate stochastically a family of independent random variables $\{Y_f\}_{f \in \mathcal{E}(B_m)}$ such that
$$    
\mathbb{Q}^{\Lambda,n}_{p,q}(Y_f=1) = \left\{\begin{array}{ll}
    g(\constNossoBeta) &\mbox{ if }f\mbox{ is favored},\\
    c_3(1-\constNossoMu)\P^{\Lambda}_{p,q}(\sigma_n(f)=1) &\mbox{ otherwise},
\end{array}\right.
$$
for some constant $c_3 = c_3(n)>0$.
\end{proposition}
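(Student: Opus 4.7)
The plan is to combine the 1-dependence of the field $\{W_f\}_f$ with a Liggett--Schonmann--Stacey-type argument adapted to handle the distinct marginal bounds for favored and unfavored edges. We begin by establishing the 1-dependence: for each $f=\langle x,y\rangle\in\mathcal{E}(\mathbb{Z}^2_+)$, the block variable $\sigma_n(f)$ is measurable with respect to the restriction of $\omega$ to edges lying in a bounded neighborhood of $f$, namely the edge set inside $\slab{B}_n(x)\cup\slab{B}_n(y)\cup\slab{R}(f)$ together with $C(f)$. These neighborhoods are disjoint whenever $f$ and $f'$ share no endpoint in $\mathbb{Z}^2_+$, which yields $\sigma_n(f)\perp\sigma_n(f')$ in that case. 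Since $\{Z_f\}$ is i.i.d.\ and independent of $\omega$, this 1-dependence is inherited by $\{W_f\}_f$.

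Next, we order the edges $f_1,f_2,\ldots$ of $\mathcal{E}(B_m)$ lexicographically and bound the conditional probabilities $\mathbb{Q}^{\Lambda,n}_{p,q}(W_{f_i}=1\mid W_{f_1},\ldots,W_{f_{i-1}})$ from below, since a uniform lower bound of this form directly produces a coupling to independent Bernoulli variables. By 1-dependence, the conditioning reduces to the at most six predecessors $f_j$ ($j<i$) sharing an endpoint with $f_i$. For a favored $f_i$, an elementary conditional-probability estimate exploiting the assumed marginal $\mathbb{Q}^{\Lambda,n}_{p,q}(W_{f_i}=1)>\constNossoBeta$ together with the bounded number of neighbors yields a lower bound of the form $g(\constNossoBeta)$ with $g(\constNossoBeta)\to 1$ as $\constNossoBeta\to 1$, analogously to the standard stochastic-domination proof for $k$-dependent fields. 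We then pick $\constNossoBeta_1$ large enough that $g(\constNossoBeta_1)>0$.

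For an unfavored $f_i$, the independent randomization by $Z_{f_i}$ plays the crucial role. Since $Z_{f_i}$ is independent of everything else, the identity
$$\mathbb{Q}^{\Lambda,n}_{p,q}(W_{f_i}=1\mid \mathcal{F}_{i-1})=(1-\constNossoMu)\,\mathbb{Q}^{\Lambda,n}_{p,q}(\sigma_n(f_i)=1\mid \mathcal{F}_{i-1})$$
holds. By 1-dependence, the conditional probability on the right depends only on the finitely many neighboring block events $\sigma_n(f_j)$, which jointly take values in a finite set of configurations. Consequently the ratios
$$\frac{\mathbb{Q}^{\Lambda,n}_{p,q}(\sigma_n(f_i)=1\mid \sigma_n(f_j)=a_j,\;j\in N(f_i))}{\mathbb{P}^{\Lambda}_{p,q}(\sigma_n(f_i)=1)}$$
range over finitely many strictly positive values and admit a uniform lower bound $c_3=c_3(n)>0$. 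This yields $\mathbb{Q}^{\Lambda,n}_{p,q}(W_{f_i}=1\mid\mathcal{F}_{i-1})\geq c_3(1-\constNossoMu)\mathbb{P}^{\Lambda}_{p,q}(\sigma_n(f_i)=1)$, which, combined with a sequential coupling argument, produces the desired independent dominated family $\{Y_f\}_{f\in\mathcal{E}(B_m)}$.

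The main obstacle is guaranteeing that $c_3(n)$ is strictly positive uniformly over all neighbor configurations arising under the conditioning. For each fixed $n$, the block event $\{\sigma_n(f)=1\}$ is determined by a bounded number of edges, so the joint distribution of $\sigma_n(f)$ and its neighboring block variables is supported on a finite set of configurations, each having strictly positive probability under $\mathbb{Q}^{\Lambda,n}_{p,q}$. Taking the minimum over these finitely many ratios gives the strictly positive constant $c_3(n)$ and completes the argument.
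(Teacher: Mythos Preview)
Your treatment of favored edges via the Liggett--Schonmann--Stacey machinery is fine and matches the paper's approach. The gap lies in the unfavored case.

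You claim that ``by 1-dependence, the conditional probability on the right depends only on the finitely many neighboring block events $\sigma_n(f_j)$''. This is not a consequence of 1-dependence. The statement that $\sigma_n(f)$ is independent of $\{\sigma_n(f'):f'\not\sim f\}$ does \emph{not} imply the Markov-type property that $\sigma_n(f_i)$ is conditionally independent of the non-neighbors given the neighbors: once you condition on a neighbor $\sigma_n(\varphi)$, you may induce correlation between $\sigma_n(f_i)$ and block variables adjacent to $\varphi$ but not to $f_i$. Consequently your ``finitely many ratios'' do not exhaust the conditioning, and the minimum you take is not the relevant quantity. Even granting the reduction, your compactness argument yields a constant that a priori depends on the local environment $\Lambda$ and on $m$, whereas the proposition requires $c_3=c_3(n)$ uniform in both.

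The paper circumvents this by conditioning not on the block variables but on the underlying microscopic configuration $\kappa$ outside $\slab{B}_n(x)\cup\slab{B}_n(y)$. Under $P_\kappa$ the event $\{\sigma_n(f)=1\}=A_n^*(f)$ is increasing (this is precisely why the corner edges $C(f)$ are forced open in the definition of $A_n^*$), and one can replace each neighbor event $\{\sigma_n(\varphi_i)=1\}$ by the increasing sub-event $A_n^f(\varphi_i)=\{\sigma_n(\varphi_i)=1\}\cap\{\omega(e)=1:e\in C(f)\}$. FKG then decouples $\{W_f=1\}$ from $\bigcap_{i\in I_\chi}A_n^f(\varphi_i)$, and a local-modification estimate on the at most $8nk$ edges of $C(f)$ compares $P_\kappa(\sigma_n(\varphi_i)=1,\,\forall i\in I_\chi)$ with $P_\kappa(A_n^f(\varphi_i),\,\forall i\in I_\chi)$ at the explicit cost $8^{8nk}$. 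The role of the extra randomization $Z_{\varphi_i}$ is to absorb the conditioning $\{W_{\varphi_i}=0\}$ (by forcing $Z_{\varphi_i}=0$), which contributes the factor $\constNossoMu^6$ and leads to $c_3=\constNossoMu^6\,8^{-8nk}$, manifestly uniform in $m$ and $\Lambda$. Your argument does not use the increasing structure of $A_n^*(f)$ at all, and without it there is no evident way to obtain the required uniform lower bound.
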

To conclude the proof of Theorem \ref{mainTheorem}, we implement a multiscale argument based on \cite{Marcos} to show that the process $\{Y_f\}_{f\in \E(\Z_+^2)}$ percolates. Once this is established, we apply Proposition \ref{TeoDominacao} to show that the sequence $\{Y_f\}_{f\in \E(\Z_+^2)}$ is stochastically dominated by $\{W_f\}_{f\in \E(\Z_+^2)}$, which, in turn, is clearly dominated by $\{\sigma_n(f)\}_{f\in \E(\Z_+^2)}$. This will allow us to conclude that the original process percolates.

The remainder of this paper is organized as follows: in Section \ref{cl_proof}, we derive several auxiliary results leading to the proof of Theorem \ref{corr_length_2}. Specifically, we derive a lower bound for the probability that an edge is pivotal for the event $V(\slab{B}_N)$ (see \eqref{ver_cross}) within the homogeneous model. Section \ref{boundsEdge} is dedicated to proving Proposition \ref{theoremEdge}. To accomplish this, we introduce a version of the Gluing Lemma for lowest paths and use the Aizenman-Grimmett \cite{AizenmannGrimmett} approach to establish a slightly supercritical homogeneous process that dominates our model. Finally, in Section \ref{secMulti}, we show that our renormalized model dominates an independent Bernoulli percolation process on $\Z_+^2$. We then use a multiscale argument to complete the proof of Theorem \ref{mainTheorem}.

\section{An upper bound for the correlation length on slabs}\label{cl_proof}

\subsection{Proof of Theorem \ref{corr_length_2}}
In this section, we consider Bernoulli bond percolation on the slab $\S^+_k$ with parameter $p>p_c(\S^+_k)$. All constants appearing in this section depend on $k$, and we omit this dependence from the notation. Before we proceed to the proof of Theorem \ref{corr_length_2}, we need to introduce further notation.

Consider the box $\slab{B}_N$ and let $R_N' = [-N,N]\times[-N,0]\times[0,k]$, $R_N''=\left[-\frac{N}{2},\frac{N}{2}\right]\times[-N,N]\times[0,k]$, and $Q_N = [-N,N]\times\left[\frac{4N}{6},\frac{5N}{6}\right]\times[0,k]$; see Figure \ref{fig:crossings}.
    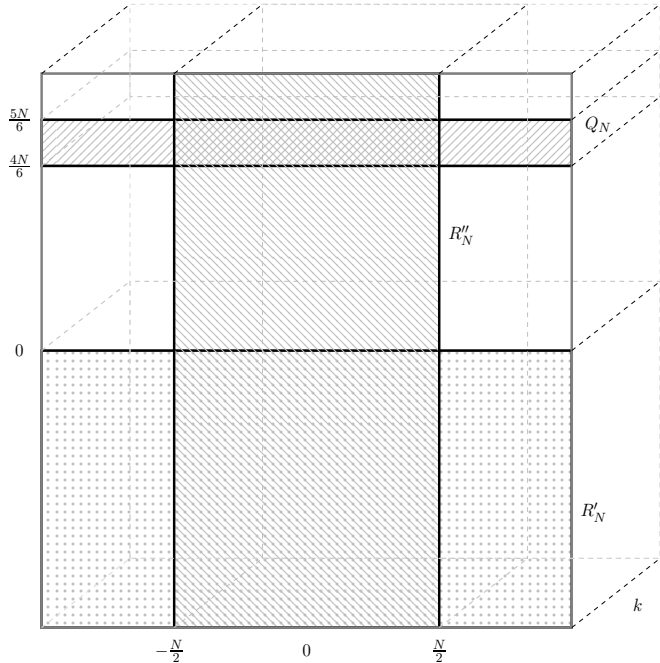
\begin{figure}[ht]
        \centering
        \resizebox{0.55\linewidth}{0.55\linewidth}{
            \begin{tikzpicture}
                \draw[color=black,pattern=dots, pattern color=black!25, ultra thick] (-6,-6) rectangle (6,0);
                \draw[color=black,pattern=north east lines, pattern color=black!25, ultra thick] (-6,4) rectangle (6,5);
                \draw[color=black,pattern=north west lines, pattern color=black!25, ultra thick] (-3,-6) rectangle (3,6);
                \draw[ultra thick, black!50] (-6,-6) rectangle (6,6); 

               \draw[dashed, very thin] (6,6) --(8,7.5);
               \draw[dashed, very thin, black!15] (-6,-6) --(-4,-4.5);
               \draw[dashed, very thin] (6,-6) --(8,-4.5);
               \draw[dashed, very thin, black] (-6,6) --(-4,7.5);

               \draw[dashed, very thin, black] (-3,6) --(-1,7.5);
               \draw[dashed, very thin, black!25] (-3,-6) --(-1,-4.5);
               \draw[dashed, very thin, black!25] (3,-6) --(5,-4.5);
               \draw[dashed, very thin, black] (3,6) --(5,7.5);

               \draw[dashed, very thin, black!25] (-6,4) --(-4,5.5);
               \draw[dashed, very thin, black!25] (-6,5) --(-4,6.5);
               \draw[dashed, very thin, black] (6,4) --(8,5.5);
               \draw[dashed, very thin, black] (6,5) --(8,6.5);
               
               \draw[dashed, very thin, black!25] (-6,0) --(-4,1.5);
               \draw[dashed, very thin, black] (6,0) --(8,1.5); 

                 \draw[color=black!25,dashed, ultra thin] (-4,-4.5) rectangle (8,1.5);
                 \draw[color=black!25,dashed,ultra thin] (-4,5.5) rectangle (8,6.5);
                 \draw[color=black!25,dashed,ultra thin] (-1,-4.5) rectangle (5,7.5);
                 \draw[ultra thin,dashed,black!15] (-4,-4.5) rectangle (8,7.5);
                \node[align=left,color=black] at (6.5,-3.5) {$R_N'$};
                \node[align=left,color=black] at (6.6,4.9) {$Q_N$};
                \node[align=left,color=black] at (3.5,2.5) {$R_N''$};
                \node[align=left] at (0,-6.5) {$0$};
                \node[align=left] at (-6.5,0) {$0$};
                \node[align=left] at (3,-6.5) {$\frac{N}{2}$};
                \node[align=left] at (-3.1,-6.5) {$-\frac{N}{2}$};
                \node[align=left] at (-6.5,4) {$\frac{4N}{6}$};
                \node[align=left] at (-6.5,5) {$\frac{5N}{6}$};
                \node[align=left] at (7.5,-5.5) {$k$};
            \end{tikzpicture}
        }
        \caption{The boxes $\slab{B}_N$, $R'_N$, $R_N''$ and $Q_N$.}
        \label{fig:crossings}\end{figure}    
The following proposition gives a lower bound for the probability that an edge is pivotal for the event  $V(\slab{B}_N)$. In what follows, let $\P_p$ denote the law of Bernoulli percolation with parameter $p$.

\begin{proposition}
    \label{cotaPivotal}
    Let $\tau>0$ and $p>p_c(\S^+_k)$. There are constants $c_4(\tau)>0$ and $c_5(\tau)>0$ such that, for all $N\leq L_\tau(p)$, 
$$\sum_{e \in \mathcal{E}(R_N'')}\P_p(e \mbox{ is piv. to } V(\slab{B}_N))\geq c_4(\tau)\left(\tfrac{p}{2(1-p)}\right)^{5+k}\left(\tfrac{N}{4}\right)^{c_5(\tau)}.$$
\end{proposition}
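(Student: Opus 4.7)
The plan is to lower-bound the sum of pivotal probabilities by counting \emph{pivotal witnesses}: for a well-separated family of vertices $v$ inside $R_N''$, I identify an edge $e(v)\in\mathcal{E}(R_N'')$ incident to $v$ and an event $F_v$ on which $e(v)$ is pivotal for $V(\slab{B}_N)$, with $\P_p(F_v)$ admitting a polynomial lower bound in $N$.

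First, I convert the hypothesis $N\le L_\tau(p)$ into quantitative crossing bounds. By the very definition of $L_\tau$, the horizontal crossing probability of $[0,2N]\times[0,N]\times[0,k]$ is strictly below $1-\tau$ whenever $N\le L_\tau(p)$; combining this with a finite-energy duality perturbation on slabs and Tassion's Gluing Lemma (stated later as Lemma \ref{GluingLemmaGenerico}, modifying Theorem~3.9 of \cite{tassion}), each of $V(R_N')$, $V(R_N'')$ and $H(Q_N)$ occurs with probability at least $c_6(\tau)>0$ uniformly in $N\le L_\tau(p)$. Iterating the Gluing Lemma across dyadic annuli around a fixed vertex produces, as a byproduct, a polynomial lower bound $c(\tau)\,r^{-\alpha(\tau)}$ on the probability that the vertex sustains two edge-disjoint open arms to the inner and outer boundaries of a radius-$r$ annulus while admitting a separating closed dual-type arm.

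Next, given $v\in R_N''$, I let $e(v)$ be a prescribed edge incident to $v$ and define $F_v$ as the intersection of (a) two edge-disjoint open arms from $v$, one reaching the bottom face of $\slab{B}_N$ through $R_N'$ and the other reaching the top face through $Q_N$; and (b) the event that a prescribed set of at most $k+4$ edges around the opposite endpoint $u$ of $e(v)$ is in a prescribed state, chosen so that (a) and (b) together imply the pivotality of $e(v)$. Construction (b) exploits the finite width $k$ of the slab: closing a transverse line of $k+1$ edges through $u$ together with a small constant number of horizontal edges locally cages $u$, so that any vertical crossing of $\slab{B}_N$ must enter $u$ through $e(v)$. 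Finite-energy handles (b), contributing the factor $(\tfrac{p}{2(1-p)})^{k+4}$; the BK--Reimer inequality applied to the disjoint open arms in (a), together with the polynomial arm estimate from the first step, yields
\begin{equation*}
\P_p(F_v)\ \ge\ c(\tau)\,\Bigl(\tfrac{p}{2(1-p)}\Bigr)^{k+4}\,N^{-\alpha(\tau)}.
\end{equation*}
Summing over a well-spread subset of $\sim (N/4)^{2}$ vertices $v$, with the $e(v)$ pairwise distinct by construction, gives the proposition for an appropriate positive exponent $c_5(\tau)$ extracted from the balance between the $N^2$-volume factor and the polynomial arm loss.

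The main obstacle will be the polynomial arm estimate in the first step: since classical RSW is unavailable on slabs, all quantitative control must come from the Gluing Lemma, and verifying that its repeated applications across dyadic scales produce a genuine polynomial bound (with an exponent $\alpha(\tau)$ explicitly controlled in terms of $\tau$ and $k$, rather than merely a sub-polynomial improvement) is delicate. By contrast, the local caging in the second step is far more tractable than the analogous construction in the two-dimensional setting of \cite{Brochette}, thanks to the extra transverse dimension of the slab, which permits a cutset of only $O(k)$ edges to be sealed around $u$.
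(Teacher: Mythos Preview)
Your construction has a genuine gap: the local caging in step~(b) cannot make $e(v)=\langle v,u\rangle$ pivotal for $V(\slab{B}_N)$. Pivotality is a \emph{global} condition: closing $e(v)$ must destroy \emph{every} vertical crossing of $\slab{B}_N$, which forces the existence of a closed cutset separating the top and bottom faces (except possibly through $e(v)$). Sealing $O(k)$ edges around $u$ does nothing to prevent a vertical crossing that bypasses $u$ and $v$ entirely. Worse, your event~(a) places two edge-disjoint open arms from $v$ itself to the top and bottom faces; concatenating these already yields a vertical crossing through $v$ that need not use $e(v)$ at all, so $e(v)$ cannot be pivotal on $F_v$. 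Finally, the BK--Reimer inequality gives only \emph{upper} bounds on disjoint occurrences, so it cannot be invoked to produce the lower bound $\P_p(F_v)\ge c(\tau)N^{-\alpha(\tau)}$ you claim; and even granting that bound, you offer no reason why the arm exponent $\alpha(\tau)$ produced by iterated gluing should satisfy $\alpha(\tau)<2$, without which the volume-versus-arm balance fails.

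The paper's argument avoids all of this by reversing the logic. It starts from the \emph{constant-probability} event $F_N=H(Q_N)\cap V(R_N')^c\cap\{TO\overset{R_N''}{\longleftrightarrow}\partial^{out}\Phi\}$, where the non-crossing $V(R_N')^c$ supplies a genuine closed cutset $\Phi$ for free. Local surgery on at most $k+4$ edges then manufactures, from any $\omega\in F_N$, a configuration in which some edge $\langle v,z\rangle\in\mathcal{E}(R_N'')$ is pivotal \emph{and} $v$ carries an additional open arm to the left side $SL$ disjoint from the pivotal structure. BKR is then used in the correct direction, $\P_p(\{e\text{ piv.}\}\circ\{v\leftrightarrow SL\})\le\P_p(e\text{ piv.})\,\P_p(v\leftrightarrow SL)$, and combined with the one-arm \emph{upper} bound $\P_p(v\leftrightarrow SL)\le(N/4)^{-c_5(\tau)}$ (which comes from Lemma~\ref{exchangingDirections}, i.e.\ crossing probabilities bounded away from~$1$ below $L_\tau(p)$) to boost the constant $\P_p(F_N)\ge c_4(\tau)$ into the desired polynomial factor $(N/4)^{c_5(\tau)}$.
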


Proposition \ref{cotaPivotal}, whose proof we delay to Section \ref{CotaPivotalSecao}, together with Russo's formula, allows us to prove Theorem \ref{corr_length_2}.

\begin{proof}[Proof of Theorem \ref{corr_length_2}]
    Fix $p^*>p_c(\S^+_k)$, $N=L_\tau(p^*)$, and write $p(t) = tp^*+(1-t)p_c(\S^+_k)$. By Theorem 3.1 of \cite{tassion} (see also \cite{BS}), there exists a constant $c_6<1$,  such that $\P_{p_c}(V(\slab{B}_N))\geq 1-c_6$, implying
    $$c_6>\P_{p^*}(V(\slab{B}_N))-\P_{p_c}(V(\slab{B}_N)).$$ Integrating and using Russo's formula, we get
    \begin{align*}
        c_6&>\int_0^1 \frac{d}{dt}\P_{p(t)}(V(\slab{B}_N))dt
        =\int_0^1 \frac{d}{dp}\P_{p(t)}(V(\slab{B}_N))p'(t)dt
        \\&=\int_0^1 \sum_{e \in \mathcal{E}(\slab{B}_N))} \P_{p(t)}(e \mbox{ is piv. to }V(\slab{B}_N)) p'(t)dt
        \\&\geq \int_0^1 \sum_{e \in \mathcal{E}(R_N'')} \P_{p(t)}(e \mbox{ is piv. to }V(\slab{B}_N)) p'(t)dt.
\end{align*}
Applying Proposition \ref{cotaPivotal}, we obtain
\begin{align*}
        c_6&> \int_0^1 c_4(\tau)\left(\tfrac{p(t)}{2(1-p(t))}\right)^{5+k}\left(\tfrac{N}{4}\right)^{c_5(\tau)} p'(t)dt
        \\&\geq c_4(\tau)\left(\tfrac{p_c}{2(1-p_c)}\right)^{5+k}\left(\tfrac{N}{4}\right)^{c_5(\tau)} \int_0^1  p'(t)dt
        \\&\geq c_4(\tau)\left(\tfrac{p_c}{2(1-p_c)}\right)^{5+k}\left(\tfrac{N}{4}\right)^{c_5(\tau)} (p^*-p_c)
        \\&=  c_7(\tau)N^{c_5(\tau)}(p^*-p_c).
    \end{align*}
        Since $N=L_\tau(p^*)$, it holds that
$$L_\tau(p^*)\leq c_8(\tau)|p^*-p_c|^{-1/c_5(\tau)},$$
    and the theorem follows with $c_1=c_8$ and $c_2=1/c_5$.
    \end{proof}

\subsection{Proof of Proposition \ref{cotaPivotal}}
\label{CotaPivotalSecao}
Let $R \subseteq \mathcal{V}(\S^+_k)$ and $A,B \subseteq R$ with $A \cap B = \emptyset$. A \first{cutset} separating $A$ and $B$ is a set $F \subseteq \mathcal{E}(R)$ such that every path from $A$ to $B$ must use some edge of $F$. A cutset $F$ is \first{minimal} if, for each non-empty subset $G \subseteq F$, $F \setminus G$ is not a cutset. We say the cutset $F$ is \first{closed} in the configuration $\omega \in \Omega$ if $\omega(e)=0$, for every  $e \in F$. The following three lemmas will help us to prove Proposition \ref{cotaPivotal}.

\begin{lema}\label{cutsetiff} The event
$\{A \overset{R}{\centernot\longleftrightarrow}B \}$ occurs if, and only if, there exists a closed cutset in $R$ separating $A$ and $B$.
\end{lema}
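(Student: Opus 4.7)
The plan is to prove the two directions separately, with the forward (hard) direction handled by an explicit construction of a closed cutset from the open cluster of $A$ in $R$.

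For the easy direction, suppose $F \subseteq \mathcal{E}(R)$ is a closed cutset separating $A$ and $B$. By the definition of a cutset, any path in $R$ from $A$ to $B$ must traverse some edge $e \in F$; since $\omega(e) = 0$ for all $e \in F$, no such path is open. Hence $\{A \stackrel{R}{\centernot\longleftrightarrow} B\}$ occurs. This takes only a sentence.

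For the forward direction, I would start from the configuration $\omega \in \{A \stackrel{R}{\centernot\longleftrightarrow} B\}$ and set
\[
C \coloneqq \{v \in R : v \stackrel{R}{\longleftrightarrow} A \text{ in } \omega\} \cup A,
\]
the open cluster of $A$ inside $R$. By the assumption, $C \cap B = \emptyset$. I would then define
\[
F \coloneqq \{\langle u,v\rangle \in \mathcal{E}(R) : u \in C,\; v \in R \setminus C\}.
\]
Two things need to be checked. First, every edge of $F$ is closed: if some $\langle u,v\rangle \in F$ were open, then $v$ would be connected to $A$ through $u$ via an open path in $R$, contradicting $v \notin C$. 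Second, $F$ is a cutset separating $A$ and $B$: any path $\gamma$ in $R$ from $A \subseteq C$ to $B \subseteq R \setminus C$ must contain a consecutive pair of vertices $(x_i, x_{i+1})$ with $x_i \in C$ and $x_{i+1} \notin C$, so the edge $\langle x_i, x_{i+1}\rangle$ belongs to $F$ and to $\gamma$.

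This is a standard duality argument and I do not foresee a serious obstacle. The only point that requires a small amount of care is making sure the construction stays entirely inside $R$, which is why $C$ is defined via open connections in $R$ rather than in all of $\S_k^+$, and why $F$ is restricted to $\mathcal{E}(R)$. Note that the lemma does not demand minimality of the cutset, so no pruning step is needed; if one wanted minimality later it could be obtained by removing edges of $F$ one by one while preserving the cutset property, but that is not required for the statement as posed.
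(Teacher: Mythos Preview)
Your proof is correct. Both directions are handled properly, and the care you take to keep the construction inside $R$ is exactly right.

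Your approach to the forward direction differs from the paper's, however. The paper argues by contrapositive in one line: if no closed cutset exists, then in particular the set of \emph{all} closed edges in $\mathcal{E}(R)$ is not a cutset (since it is certainly closed), so by the definition of cutset some path from $A$ to $B$ avoids every closed edge and is therefore open. Your argument instead constructs an explicit closed cutset as the edge boundary of the open cluster $C$ of $A$ in $R$. The paper's route is slicker for this lemma, but your construction is not wasted effort: the set $F$ you build is precisely the closed boundary $\partial^c A$ that the paper introduces immediately afterwards and uses in the proof of the next lemma on the uniqueness of the minimal closed cutset. So your version anticipates the object actually needed downstream, at the cost of a few more lines here.
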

\begin{proof}
    Clearly, if the event $\{A \overset{R}{\longleftrightarrow} B\}$ occurs then, by definition,  there is no closed cutset separating $A$ and $B$ in $R$. On the other hand, if there is no such closed cutset, the set of all closed edges in $\mathcal{E}(R)$ is also not a cutset separating $A$ and $B$, and hence there is an open path joining $A$ and $B$ in $R$.
    \end{proof}

Fix $R = \slab{B}_N$, $A:=[-N,N]\times\{-N\}\times[0,k]$, and $B:=[-N,N]\times\{N\}\times[0,k]$. Let $\mathcal{C}(A):=\{x \in R: x \stackrel{R}{\longleftrightarrow} A\}$ be the open cluster of $A$ in $R$ and $$\frontC A:=\{e =\langle x,y \rangle \in \mathcal{E}(R): x \in \mathcal{C}(A), \omega(e)=0\},$$ the \first{closed boundary} of the cluster of $A$.

\begin{lema}    \label{lowestCutset}
    If the event $\{A \overset{R}{\centernot\longleftrightarrow} B\}$ occurs, then there is a unique minimal closed cutset contained in $\frontC A$.
\end{lema}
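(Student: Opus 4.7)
The strategy is to exhibit the unique minimal closed cutset explicitly. Let $K_B$ denote the connected component containing $B$ in the subgraph of $R$ induced by $R \setminus \mathcal{C}(A)$ (well-defined because $A \overset{R}{\centernot\longleftrightarrow} B$ forces $B \cap \mathcal{C}(A) = \emptyset$ and $B$ itself is connected in $R \setminus \mathcal{C}(A)$), and set
$$\widehat{F} := \{e = \langle x, y \rangle \in \mathcal{E}(R) : x \in \mathcal{C}(A) \text{ and } y \in K_B\}.$$
Every such edge is automatically closed: otherwise its endpoint $y$ would be joined to $A$ by an open path, contradicting $y \notin \mathcal{C}(A)$. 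Hence $\widehat{F} \subseteq \frontC A$.

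Next, I would verify that $\widehat{F}$ is a cutset separating $A$ from $B$. Take any path $\gamma$ in $R$ from $A$ to $B$: its initial vertex lies in $\mathcal{C}(A)$ while its terminal vertex lies in $K_B$. Since distinct components of $R \setminus \mathcal{C}(A)$ have no edges between them in the induced subgraph, $\gamma$ can only enter $K_B$ by traversing an edge with one endpoint in $\mathcal{C}(A)$ and the other in $K_B$, i.e., an edge of $\widehat{F}$.

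For uniqueness, let $F \subseteq \frontC A$ be any minimal closed cutset, and suppose for contradiction that some $e^{\star} = \langle x, y \rangle \in \widehat{F}$ satisfies $e^{\star} \notin F$. I would then concatenate three pieces: (i) an open path from $A$ to $x$ inside $\mathcal{C}(A)$, whose edges are open and hence absent from $\frontC A \supseteq F$; (ii) the edge $e^{\star}$, which by assumption is not in $F$; and (iii) a path from $y$ to $B$ within $K_B$, whose edges have both endpoints in $R \setminus \mathcal{C}(A)$ and therefore cannot lie in $\frontC A$. The concatenated path from $A$ to $B$ avoids $F$ entirely, contradicting that $F$ is a cutset. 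Thus $\widehat{F} \subseteq F$. If this inclusion were strict, removing the non-empty set $F \setminus \widehat{F}$ would leave $\widehat{F}$---still a cutset by the previous paragraph---violating the minimality of $F$. Hence $F = \widehat{F}$, giving both existence and uniqueness.

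The main subtlety is pinning down the correct candidate for $\widehat{F}$. A naive guess---say, the set of all closed edges joining $\mathcal{C}(A)$ to its complement in $R$---can fail to be minimal whenever $\mathcal{C}(A)$ encloses a pocket of vertices disconnected from $B$ in the ambient graph; boundary edges facing such pockets are redundant and can be dropped. Restricting the boundary to edges facing the specific component $K_B$ isolates exactly the edges that genuinely separate $A$ from $B$, after which the three-piece path concatenation yields uniqueness essentially for free.
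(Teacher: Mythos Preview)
Your proof is correct and takes a genuinely different route from the paper's. The paper argues non-constructively: it notes that some minimal closed cutset inside $\frontC A$ exists (by finiteness) and then, assuming two distinct minimal cutsets $F,G\subseteq\frontC A$, picks an edge $e\in G\setminus F$, uses minimality of $G$ to produce a path through $e$ that is forced to meet $F$ outside $\mathcal{C}(A)$, and iterates the roles of $F$ and $G$ to reach a contradiction via the partition $\mathcal{L}_A(\cdot)\sqcup\mathcal{L}_B(\cdot)$. You instead exhibit the cutset explicitly as the set of edges between $\mathcal{C}(A)$ and the component $K_B$ of $B$ in $R\setminus\mathcal{C}(A)$, and your three-piece concatenation gives uniqueness in one stroke. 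Your approach is cleaner and has the bonus of identifying the ``nearest cutset to $A$'' concretely---useful since the paper immediately uses this object in the proof of Proposition~\ref{cotaPivotal}. The paper's contradiction argument, by contrast, never needs $B$ to be connected (your definition of $K_B$ does), so it would transfer verbatim to more general $A,B$; in the present setting this extra generality is not needed.
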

\begin{proof}
    By Lemma \ref{cutsetiff}, there is at least one closed cutset, and hence there is at least one minimal cutset. Suppose there are two minimal cutsets $F,G \subseteq \frontC A$. Clearly, $G \not \subset F$ (and $F\not \subset G$) since otherwise, one of them would not be minimal.

    Next, observe that a cutset $F$ splits $R$ in two disjoint subsets, namely
     \begin{equation}
     \label{SideofA}
         \mathcal{L}_A(F):=\{x \in R: x \longleftrightarrow A \mbox{ without using edges of } F\}\end{equation} and 
         \begin{equation}
         \label{SideofB}
             \mathcal{L}_B(F):=\{x \in R: x \longleftrightarrow B \mbox{ without using edges of } F\}.
         \end{equation}

    Let $e =\langle a,b \rangle \in G\setminus F$. Since $G \subseteq \frontC A$, either $a$ or $b$ is connected to $A$ by an open path $\gamma$. Without loss of generality, assume $a$ is such a vertex. By the minimality of $G$, the set $G \setminus \{e\}$ is not a cutset, and therefore, there is a path $\gamma^*$ going from $A$ to $B$ without using edges of $G\setminus\{e\}$ and such that $e\in\mathfrak{E}(\gamma^*)$ (see \eqref{path_edgeset}). Let $\bar{\gamma}^*$ be the portion of $\gamma^*$ connecting $a$ to $B$, and let $\gamma' = \gamma \oplus \bar{\gamma}^*$ be the concatenation of $\gamma$ and $\bar{\gamma}^*$. Note that, since $F$ is a closed cutset and $\gamma$ only has open edges, it holds that $\emptyset \neq \mathfrak{E}(\gamma')\cap F\subseteq \mathfrak{E}(\bar{\gamma}^*)$. This implies that $a,b \in \mathcal{L}_A(F)$. 
    
    Now, take an edge $f = \langle c,d \rangle \in (\mathfrak{E}(\gamma')\cap F)\setminus\{e\}$. Since $f \in \mathfrak{E}(\bar{\gamma}^*)$, it holds that $c,d \in \mathcal{L}_B(G)$. Using the previous argument for the edge $f$, and exchanging the roles of $F$ and $G$, we find that $c,d\in \mathcal{L}_A(G)$. Therefore, $f \in \mathcal{L}_A(G)\cap \mathcal{L}_B(G)$, a contradiction.
\end{proof}
In what follows, when the event $\{A \overset{R}{\centernot\longleftrightarrow} B\}$ occurs, we refer to the minimal closed cutset contained in $\frontC A$ as the \first{nearest cutset to $A$}.


   The next step is to bound the probability of crossing horizontally $n \times 2n$ boxes. We shall see that, as long as $n$ is below the correlation length, such bounds will not depend on $p$. Let $$f_p(n,m) = \P_p(\{0\}\times[0,m]\times[0,k]\overset{[0,n]\times[0,m]\times[0,k]}
    {\looooongleftrightarrow} \{n\}\times[0,m]\times[0,k]).$$
    It follows from the definition in \eqref{cl_def} that, for all $p \in (p_c,1)$, 
    \begin{equation}f_p(2n,n)<1-\tau, \quad \mbox{for all } n<L_\tau(p).
    \label{CorrLenDef}
    \end{equation}

    \begin{lema}\label{exchangingDirections}Let $s>0$. There exists a function $u(\tau)>0$ such that, for all $p \in (p_c,1-s]$,
	$$f_p(n,2n)<1-u(\tau), \quad \mbox{for all } n < L_\tau(p).$$
    \end{lema}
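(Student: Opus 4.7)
I would argue by contrapositive: assume $f_p(n, 2n) \geq 1 - u$ for some $u>0$ to be chosen small depending on $\tau$, and aim to derive $f_p(2n, n) \geq 1 - \tau$, which by the definition of $L_\tau(p)$ would give $n \geq L_\tau(p)$, contradicting the hypothesis $n < L_\tau(p)$. By the reflection symmetry of $\S^+_k$ under $(x, y)\mapsto (y, x)$, the hypothesis is equivalent to $\P_p(V(B)) \geq 1 - u$, where $B = [0, 2n]\times[0, n]\times[0, k]$ is a wide box. Thus the task becomes: starting from the easy-direction (vertical) crossing of $B$ being close to certainty, conclude that the hard-direction (horizontal) crossing of the \emph{same} box $B$ is also close to certainty.

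This is the slab analog of the classical Russo--Seymour--Welsh task of exchanging crossing directions on a rectangle. My plan is to apply translation invariance to get crossings of shifted copies $V(B+v)$ of $V(B)$ with probability at least $1 - u$, combine them via the FKG inequality so that $N$ of them occur simultaneously with probability at least $(1 - u)^N$, and then use the Gluing Lemma (Lemma \ref{GluingLemmaGenerico}, adapted from Theorem 3.9 of \cite{tassion}) to concatenate these easy-direction crossings through their small overlapping interfaces into a single cluster whose extent in the horizontal direction spans $B$. Trading the FKG losses against the gluing losses, and possibly iterating the construction, yields $\P_p(H(B))\geq 1 - \tau(u)$ with $\tau(u)\to 0$ as $u \to 0$. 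Choosing $u$ small enough so that $\tau(u) < \tau$ produces the desired contradiction. The precise shape of the relationship $\tau(u)$ (likely a polynomial with exponent depending on $k$) is not important — what matters is positivity.

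The hard part will be the direction-exchange step itself. In $\mathbb{Z}^2$, this step is classically handled by planar duality, which directly relates primal open crossings to dual closed crossings in the perpendicular direction; in the slab $\S^+_k$ no such duality is available, and the argument must be entirely combinatorial, relying on the Gluing Lemma and a careful bookkeeping of how the involved constants depend on the slab thickness $k$ (entering through the width of the gluing interfaces and the number of events being combined). Modulo this bookkeeping, the construction is the slab version of the finite-size RSW mechanism developed in \cite{tassion}, which is what ultimately guarantees that positivity of $u(\tau)$ can be obtained.
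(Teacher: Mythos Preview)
Your overall contrapositive strategy --- assume the easy-direction crossing $f_p(n,2n)$ is close to $1$ and deduce that the hard-direction crossing $f_p(2n,n)$ is close to $1$, contradicting $n<L_\tau(p)$ --- is exactly the skeleton of the paper's argument. The gap is in the direction-exchange step, which you describe as producing $\P_p(H(B))\geq 1-\tau(u)$ for the \emph{same} box $B$. The slab RSW machinery from \cite{tassion} (Theorem~3.19 there, combined with the stretching Theorem~3.11) does not do this: it converts a near-certain $3m\times 4m$ crossing into a near-certain $6m\times 5m$ crossing only at a scale $m=c_9n$ that is a fixed multiple of the input scale, with $c_9=c_9(\zeta)$ depending on the target accuracy. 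Your gluing/FKG plan runs into the same obstruction: overlapping easy-direction crossings of translated tall boxes need not meet in projection unless you already control a hard-direction crossing of the overlap, which is what you are trying to prove. So at a fixed $(p,n)$ you only obtain $f_p(2c_9n,c_9n)\geq 1-\tau$, hence $c_9n\geq L_\tau(p)$, i.e.\ $n\geq L_\tau(p)/c_9$. When $c_9>1$ this is perfectly compatible with the hypothesis $n<L_\tau(p)$, and no contradiction results.

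The paper repairs this by working not at a fixed $(p,n)$ but at the level of the supremum $\sup_{p>p_c}\max_{n<L_\tau(p)} f_p(n,2n)$. Assuming this supremum equals $1$ yields a sequence $(p_t,n_t)$ with $p_t\downarrow p_c$, $n_t<L_\tau(p_t)$ and $f_{p_t}(n_t,2n_t)\to 1$. Because $L_\tau(p)\to\infty$ as $p\downarrow p_c$ and $L_\tau$ is monotone in $p$, one can pass to a subsequence along which $c_9 n_{t_i}<L_\tau(p_{t_i})$, and now the scale-shifted conclusion $f_{p_{t_i}}(2c_9n_{t_i},c_9n_{t_i})\geq 1-\tau$ does contradict the definition of $L_\tau(p_{t_i})$. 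This limiting argument over $p$ is the missing ingredient in your proposal; without it, the pointwise contrapositive you sketched does not close.
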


    \begin{proof} The following is based on the proof of Theorem 3.19 of \cite{tassion}.
	For $p \in (p_c,1-s]$, define the functions $\mathcal{M}_p = \mathcal{M}_p(\tau)$ and $\mathcal{N}_p = \mathcal{N}_p(\tau)$ as
	\begin{align*}
	    \mathcal{M}_p:=\max_{n<L_\tau(p)}\left\{f_p(n,2n)\right\}\quad \mbox{ and }\quad 
            \mathcal{N}_p:=\max_{n<L_\tau(p)}\left\{f_p(2n,n)\right\}.
	\end{align*}
 We claim that $\sup_{p_c<p<1-s} \mathcal{M}_p<1$, which implies the existence of a function $u(\tau)>0$ such that, for all $p\in(p_c,1-s]$,
	$$f_p(n,2n)\leq 1-u(\tau), \quad \mbox{for all } n<L_\tau(p).$$
    The proof proceeds by contradiction, showing that, if $\sup_{p_c<p\leq1-s}\mathcal{M}_p=1$, then $\sup_{p_c<p\leq1-s}\mathcal{N}_p=1$, which is not possible according to \eqref{CorrLenDef}.

	Assume therefore that $\sup_{p_c<p \leq 1-s}\mathcal{M}_p=1$. By Theorem 3.11 of \cite{tassion}, there exists a continuous increasing function $h:[0,1]\mapsto [0,1]$ with $h(1)=1$ such that
    $$f_p(3n,4n)\geq h(f_p(n,2n)),$$ for all $p_c<p\leq1-s$. Hence, 
    
	\begin{equation}
		\sup_{p_c<p\leq1-s}\max_{n<L_\tau(p)} f_p(3n,4n)=1.
		\label{sup34}
	\end{equation}
	Observe  that $L_\tau(p)<\infty$ for all $p>p_c$. Also, since $L_\tau(p)$ is decreasing in $p$, we have $L_\tau(p)\leq L_\tau(p^*)$ whenever $p_c<p^*<p<p'$, yielding
    \begin{equation}
        \label{corLengIneq}
        \max_{n<L_\tau(p)}f_p(3n,4n)\leq \max_{n<L_\tau(p^*)}f_{p'}(3n,4n).
    \end{equation}

    Therefore, if \eqref{sup34} is true, there must exist a sequence $p_t\downarrow p_c$, and a sequence $n_t<L_\tau(p_t)$ with $n_t \to \infty$, such that 
    \begin{equation}
        \lim_{t\to \infty}\left[f_{p_t}(3n_t,4n_t)\right]=1.
        \label{sup34limit}
    \end{equation}
    
    As in Theorem 3.19 of \cite{tassion}, for all $\zeta>0$, there is a constant $c_9 = c_9(\zeta)>0$ such that, if $f_{p}(3c_9n,4c_9n)$ is close enough to $1$, then $f_{p}(6c_9n,5c_9n)>1-\zeta$.    By $\eqref{corLengIneq}$ and \eqref{sup34limit}, there are subsequences $p_{t_i}$ and $n_{t_i}$ with $p_c<p_{t_i}\leq p_i$ and $n_{t_i}\leq \frac{L_\tau(p_{t_i})}{c_9}$, such that $P_{p_{t_i}}(3c_9n_{t_i},4c_9n_{t_i}) \rightarrow 1$ as $t_i$ goes to infinity. Hence, for $t_i$ large enough, we obtain
        $$f_{p_{t_i}}(6c_9n_{t_i},5c_9n_{t_i})\geq 1-\zeta.$$

    Finally, a second application of Theorem 3.11 of \cite{tassion} gives that $f_{p_i}(2c_9n_{t_i},c_9n_{t_i})>1-\tau$ for all $t_i$ large enough. But this is a contradiction, since $c_9n_{t_i}\leq L_\tau(p_{t_i})$ for all $t_i$. This completes the proof.
    \end{proof}

\begin{corollary}\label{CotaUmBraco} Let $\tau>0$. There exists a constant $c_5(\tau)>0$ such that 
\begin{equation*}
\P_p(0 \overset{\slab{B}_N}{\longleftrightarrow }\frontA \slab{B}_N)\leq N^{-c_5(\tau)},\; \mbox{ for }N<L_\tau(p).
\end{equation*}
\end{corollary}
This follows from Lemma \ref{exchangingDirections} and classical annuli crossing techniques for planar percolation; see, e.g., the proof of Theorem 6 in \cite{bollobas}.

We are now in a position to prove Proposition \ref{cotaPivotal}.

\begin{proof}[Proof of Proposition \ref{cotaPivotal}]

Fix $p^*>p_c(\S^+_k)$. Let $p_c(\S_k^+)<p<p^*$, and $N<L_\tau(p)$. Define the sets $BO=[-N,N]\times \{-N\}\times [0,k]$, $TO=[-N,N]\times\{N\}\times[0,k]$, $LS = \{-N\}\times[-N,N]\times[0,k]$, and $RS = \{N\}\times[-N,N]\times [0,k]$, the bottom, top, left side and right side of the box $\slab{B}_N$, respectively.


    Recall the definition of the sets $R_N'$ and $Q_N$ at the beginning of this section, and define the event $E_N:=H(Q_N)\cap V(R_N')^c$ (see \eqref{Hor_cross} and \eqref{ver_cross}). If the event $V(R_N')^c$ occurs, let $\Phi \subseteq \mathcal{E}(R'_N)$ be the nearest cutset to $BO$ in $R_N'$, separating the top and bottom of $R_N'$ (note that such nearest cutset to $BO$ exists by Lemmas \ref{cutsetiff} and \ref{lowestCutset}), and let $$\frontD\Phi:=\{x \in \mathcal{L}_{TO}(\Phi): \langle x,y \rangle \in \Phi \mbox{ for some } y \in \slab{B}_N\},$$ where $\mathcal{L}_{TO}(\Phi)$ is defined as in \eqref{SideofA} and \eqref{SideofB}. Writing $F_N= E_N \cap \left\{
        TO(\slab{B}_N)\overset{R_N''}{\longleftrightarrow} \frontD\Phi\right\}$ and conditioning on $\Phi$, we obtain
        \begin{align*}
            \P_p(F_N)&=\sum_{S\subseteq \E(R_N')}\P_p(F_N|\Phi=S)\P_p(\Phi=S)
            \\&=\sum_{S\subseteq \E(R_N')}\P_p\left(E_N\cap \left\{TO(\slab{B}_N)\overset{R_N''}{\longleftrightarrow} \frontD\Phi\right\}\Big|\Phi=S\right)\P_p(\Phi=S)
            \\&=\sum_{S\subseteq \E(R_N')}\P_p\left(H(Q_N)\cap\left\{TO(\slab{B}_N)\overset{R_N''}{\longleftrightarrow} \frontD\Phi\right\}\Big|\Phi=S\right)\P_p(\Phi=S).
        \end{align*}
        
        Observe that, conditioned on $\{\Phi = S\}$, the events $H(Q_N)$ and $\{TO(\slab{B}_N)\overset{R_N''}{\longleftrightarrow}\partial^{out}\Phi\}$ are measurable with respect to $\{\omega(e)\}_{e \in \mathcal{E}(\mathcal{L}_{TO}(S))}$, where $\mathcal{E}(A)$ denotes the set of edges with both endpoints in $A\subset\mathbb{S}_k ^+$. Thus,  
        an application of the FKG inequality yields        
        \begin{align*}
            \P_p(F_N)&\geq \sum_{S\subseteq E(R_N')}\P_p\bigg(H(Q_N)\Big|\Phi=S\bigg)\P_p\left(TO(\slab{B}_N)\overset{R_N''}{\longleftrightarrow} \frontD\Phi\big|\Phi=S\right)\P_p\bigg(\Phi=S\bigg)
            \\&\geq \sum_{S\subseteq \E(R_N')}\P_p(H(Q_N)|\Phi=S)\P_p(V(R_N''))\P_p(\Phi=S)
            \\&= \P_p(H(Q_N))\P_p(V(R_N''))\sum_{S\subseteq E(R_N')}\P_p(\Phi=S)
            \\&= \P_p(H(Q_N))\P_p(V(R_N''))\P_p(V(R_N')^c)
            \\&\geq c_4(\tau),
        \end{align*}    
for some constant $c_4(\tau)>0$. The first equality follows by independence between the events $H(Q_N)$ and $\{\Phi=S\}$, while the last inequality is a consequence of Theorem 3.1 of \cite{tassion} and Lemma \ref{exchangingDirections}.

In the following, $A \circ B$ denotes the disjoint occurrence of the events $A$ and $B$ (see definition on page 160 of \cite{Chayes}). Fix a configuration $\omega \in F_N$. We make a series of local changes in $\omega$ in order to find a new configuration $\omega'$ and an edge $\langle v,z \rangle\in \mathcal{E}(R_N'')$ such that 
\begin{enumerate}[label = (\roman*)]
     \item $\omega' \in \{v \overset{\slab{B}_N}{\longleftrightarrow}TO(\slab{B}_N) \}\circ \{v \overset{\slab{B}_N}{\longleftrightarrow} LS(\slab{B}_N)\}$,
    \item $\omega' \in \{z \overset{\slab{B}_N}{\longleftrightarrow} BO(\slab{B}_N)\}$,
    \item $|\{e \in \E(\slab{B}_N):\omega(e) \neq \omega'(e)\}|\leq  k+5$,
    \item there is a closed cutset in $\omega'$ separating $BO(\slab{B}_N)$ from $TO(\slab{B}_N)$.
\end{enumerate}

Observe that items (i), (ii), and (iv) ensure that the edge $\langle v,z\rangle$ is pivotal for $V(\slab{B}_N)$ in the configuration $\omega'$. We perform the necessary local changes in two steps.

\textbf{Step 1: } Since $\omega \in F_N$, there is an edge $\langle x_0, y_0 \rangle  \in \Phi$, and open paths $\gamma_a$, $\gamma_b$, and $\gamma_c$, such that the following holds (see Figure \ref{fig:CrossingsProof}):
       \begin{itemize}
           \item $\gamma_a \subset R_n''$ joins $x_0$ to $TO$,
           \item $\gamma_b$ joins $y_0$ to $BO$,
           \item $\gamma_c$ crosses the box $Q_N$.
       \end{itemize}

    \begin{figure}[ht]
        \centering
        \resizebox{0.6\linewidth}{0.6\linewidth}{
            \begin{tikzpicture}

                \draw[color=black!5, ultra thick] (-6,-6) rectangle (6,0);
                \draw[color=black!5, ultra thick] (-6,4) rectangle (6,5);
                \draw[color=black!5, ultra thick] (-3,-6) rectangle (3,6);
                \draw[ultra thick, black!15] (-6,-6) rectangle (6,6); 

                \draw[ very thin,black!15] (6,6) --(8,7.5);
                \draw[ very thin, black!15] (-6,-6) --(-4,-4.5);
                \draw[ very thin,black!15] (6,-6) --(8,-4.5);
                \draw[ very thin, black!15] (-6,6) --(-4,7.5);

                \draw[dashed, very thin, black!5] (-3,6) --(-1,7.5);
                \draw[dashed, very thin, black!5] (-3,-6) --(-1,-4.5);
                \draw[dashed, very thin, black!5] (3,-6) --(5,-4.5);
                \draw[dashed, very thin, black!5] (3,6) --(5,7.5);

                \draw[dashed, very thin, black!5] (-6,4) --(-4,5.5);
                \draw[dashed, very thin, black!5] (-6,5) --(-4,6.5);
                \draw[dashed, very thin, black!5] (6,4) --(8,5.5);
                \draw[dashed, very thin, black!5] (6,5) --(8,6.5);
               
                \draw[dashed, very thin, black!5] (-6,0) --(-4,1.5);
                \draw[dashed, very thin, black!5] (6,0) --(8,1.5); 

                \draw[color=black!5,dashed, ultra thin] (-4,-4.5) rectangle (8,1.5);
                \draw[color=black!5,dashed,ultra thin] (-4,5.5) rectangle (8,6.5);
                \draw[color=black!5,dashed,ultra thin] (-1,-4.5) rectangle (5,7.5);
                \draw[ultra thin,black!15] (-4,-4.5) rectangle (8,7.5);               

                \draw [color=black!35,pattern=north west lines, pattern color=black!25, ultra thick] (-6,-1.5)--(-5,-0.75)--(7,-0.75)--(6,-1.5) --cycle;

                \draw [color=black!35,pattern=north west lines, pattern color=black!25, ultra thick] (-5,-0.75)--(-5,-3.0)--(7,-3.0)--(7,-0.75) --cycle;

                \draw [color=black!35,pattern=north west lines, pattern color=black!25, ultra thick] (-5,-3.0)--(-4,-2.25)--(8,-2.25)--(7,-3.0) --cycle;

                 \draw[color=black, very thick,loosely dotted](-2,6.75)--(-2,3.75)--(0,3.75)--(1,4.5)--(1,5.5)--(-1,5.5)--(-3,4)--(-3,2)--(-1,3.5)--(1,3.5)--(0,2.75)--(0,0.75)--(1,1.5)--(1,-2.5);

                 \draw[color=black, ultra thick] (1,-2.5)--(1,-3.5);
                
                 \draw[color=black, very thick,loosely dashed](1,-3.5)--(-1,-5)--(0,-5)--(0,-2)--(-5,-2)--(-5,-4)--(-4,-3.25)--(-4,-5.25);
                 
                 \draw[color=black, very thick](-5,5.75)--(3,5.75)--(2,5)--(2,4)--(4,5.5)--(5,5.5)--(5,6.5)--(7,6.5)--(7,5.5)--(6,4.75)--(7,4.75);

                 \node[align=left,color=black] at (8,-1.5){\Huge$\Phi$};
                 \node[align=left,color=black] at (1.5,2.5){\Huge $\gamma_a$};
                 \node[align=left,color=black] at (-0.5,-5.5){\Huge $\gamma_b$};
                 \node[align=left,color=black] at (8.7325,5.825){\Huge $\gamma_c$};

                 \filldraw [gray] (1,-2.5) circle (3pt);
                 \filldraw [gray] (1,-3.5) circle (3pt);
                 \node[align=left] at (1.5,-2.5){\LARGE $x_0$};
                 \node[align=left] at (1.5,-3.5){\LARGE $y_0$};
                 
            \end{tikzpicture}
        }
           \caption{A sketch of the paths $\gamma_a, \gamma_b$ and $\gamma_c$.}
           \label{fig:CrossingsProof}
       \end{figure}
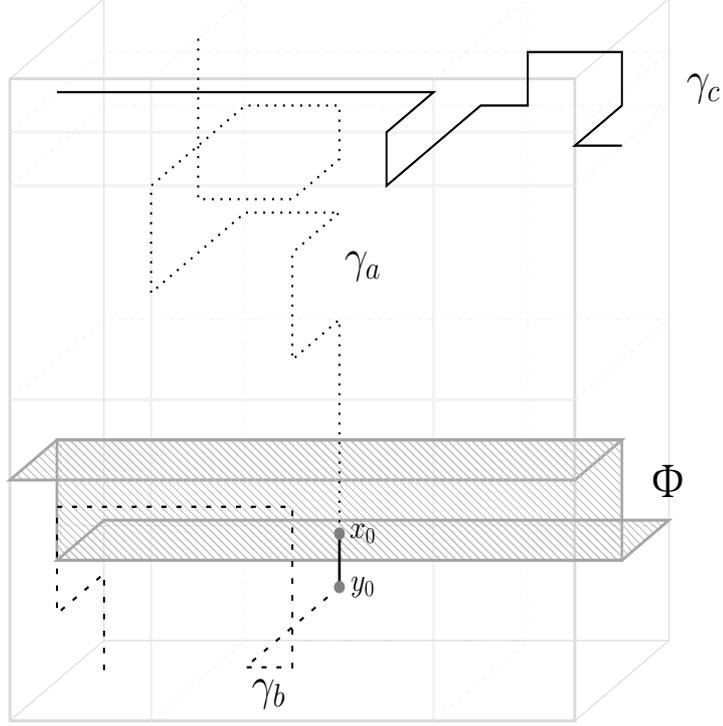

    Let $\pi:\mathbb{Z}^2\times[0,k]\to \mathbb{Z}^2$, $\pi(x,y,z)=(x,y)$, be the projection of the slab onto $\mathbb{Z}^2$. If $\gamma$ is a path, let $\pi(\gamma)$ be the projection of the vertices of $\gamma$ onto $\Z^2$. Observe that the path $\gamma^* = \gamma_a\oplus \langle x_0,y_0 \rangle\oplus \gamma_b$ crosses the box $\slab{B}_N$ vertically, and $\gamma_c$ crosses it horizontally. Moreover, $\pi(\gamma_a)\cap \pi(\gamma_c)\neq \emptyset$.

        Fix a vertex $w$ in $\pi(\gamma_a)\cap \pi(\gamma_c)$; if there is a choice for $w$ we pick it according to some arbitrary ordering of $\mathbb{Z}^2$. Consider a new configuration $\bar{\omega}$ as 
        $$\bar{\omega}(\langle a,b \rangle) = \left\{\begin{array}{ll}
            1 &\mbox{ if } \pi(a) = \pi(b)=w,\\
            \omega(\langle a,b \rangle) &\mbox{otherwise}.
        \end{array}\right.$$
 In this new configuration, the paths $\gamma_a$ and $\gamma_c$ are connected, thereby joining $x_0$ to $LS$, $RS$, and $TO$. Observe that the vertex $y_0$ is still connected to $BO$ in $\bar{\omega}$, and  that $|\{e \in \mathcal{E}(\slab{B}_N): \omega(e)\neq \bar{\omega}(e)\}|\leq k$.


        \textbf{Step 2:} Define the following collection of paths: 
    \begin{align*}
        \Gamma_T&:=\{\gamma \mbox{ is an open path in }\slab{B}_N: \gamma \mbox{ connects } x_0 \mbox{ to }TO\},
        \\\Gamma_S&:=\{\gamma \mbox{ is an open path in }\slab{B}_N: \gamma \mbox{ connects } x_0 \mbox{ to }LS\},
    \end{align*}
    and let $$I:=\{u \in \slab{B}_N: u \in \gamma_t \cap \gamma_s, \mbox{ for all } \gamma_t \in \Gamma_T, \gamma_s \in \Gamma_S\}.$$

    Clearly, $x_0 \in I(\bar{\omega})$. If $I(\bar{\omega}) = \{x_0\}$, then there is at least one path $\gamma_s \in \Gamma_S$ and $\gamma_t \in \Gamma_T$ with $\gamma_s \cap \gamma_t = \{x_0\}$. Hence, we can make $\langle v,z \rangle = \langle x_0,y_0 \rangle$ and $\omega'=\bar{\omega}$.

    Assume $|I(\bar{\omega})|\geq 2$. Given a path $\gamma \in \Gamma_T \cup \Gamma_S$ and $u,u'\in I(\bar{\omega})$ (in particular $u,u' \in \gamma$), we write $u\leq_\gamma u'$ when $u$ is closer to $x_0$ than $u'$ in $\gamma$, and we define $u \leq_Iu'$ when $u \leq_\gamma u'$ for some $\gamma\in \Gamma_T\cup \Gamma_S$. We will show later that the definition of $\leq_I$ does not depend on $\gamma$; for now, assume that this is the case.

    Let $v_0$ be the largest vertex of $I(\bar{\omega})$ according to the ordering $\leq_I$. Observe that, since $\gamma_t\subset R_N''$, we have $v_0\in \gamma_t$ and hence $v_0\in R_N''$. As $v_0$ is the last element of $I(\bar{\omega})$, there are three paths $\gamma_s$, $\gamma_t$, $\gamma_x$, such that $\gamma_s$ joins $v_0$ to $LS$, $\gamma_t$ joins $v_0$ to $TO$, $\gamma_x$ joins $v_0$ to $x_0$, and
    $$\gamma_s\cap I = \gamma_t\cap I=  \gamma_s \cap \gamma_t\cap \gamma_x = \{v_0\}.$$

    The desired configuration $\omega'$ is obtained as follows: let $\langle z,v \rangle$ be the last edge of the path $\gamma_x$, with $v=v_0$. Close all edges adjacent to $v_0$, except those in $\mathfrak{E}(\gamma_s)\cup \mathfrak{E}(\gamma_t)$ (at most 4 edges), and open the edge $\langle x_0,y_0 \rangle$. Note that $\langle z,v \rangle$ is closed in $\omega'$.

    Observe that the paths $\gamma_s$ and $\gamma_t$ ensure that $v$ satisfies condition (i). Because $\langle x_0,y_0 \rangle$ is open and $z$ is connected to $x_0$ through $\gamma_x$, condition (ii) is also valid. Since $|\{e \in \mathcal{E}(\slab{B}_N): \omega'(e)\neq \bar{\omega}(e)\}|\leq 5$, we have that $\omega'$ differs from $\omega$ in at most $k+5$ edges, so (iii) is also true.

    Finally, since $\bar{\omega} \in V(R_N')^c$ and $\langle x_0,y_0 \rangle$ is the only edge in $\mathcal{E}(R_N')$ that we opened to get $\omega'$, every possible vertical crossing of $\slab{B}_N$ in $\omega'$ must use the edge $\langle x_0,y_0 \rangle$. However, every path connecting $x_0$ to $TO$ must pass through every vertex in the set $I(\omega')$, in particular the vertex $v$, and since we closed all edges adjacent to $v$ that could join $x_0$ to $v$ in $\omega'$, no such open path exists. Therefore, $\omega' \notin V(\slab{B}_N)$, which implies that (iv) is also true.

    Since $v \in R_N''$, the path $\gamma_s$ has length at least $\frac{N}{4}$. Also, by construction, 
    $$\omega' \in \{\langle v,z \rangle \mbox{ is pivotal for }V(\slab{B}_N)\}\circ\{v \overset{\slab{B}_N}{\longleftrightarrow} LS\}.$$
    By the BKR inequality and Corollary \ref{CotaUmBraco}, there exists a constant $c_5(\tau)>0$ such that
    \begin{align*}
        \P_p(\{\langle v,z \rangle\mbox{ piv. to } V(\slab{B}_N)\}\circ \{v \overset{\slab{B}_N}{\longleftrightarrow} LS\})
        &\leq \P_p(\langle v,z \rangle\mbox{ piv. to } V(\slab{B}_N))\P_p(\{v \overset{\slab{B}_N}{\longleftrightarrow} LS\})
        \\&\leq \P_p(\langle v,z \rangle\mbox{ piv. to } V(\slab{B}_N))\P_p(\{v \overset{\slab{B}_{N/4}}{\longleftrightarrow}\frontA{B}_\frac{N}{4} (v)\})\\
        &\leq \P_p(\langle v,z \rangle\mbox{ piv. to } V(\slab{B}_N))\left(\tfrac{N}{4}\right)^{-c_5(\tau)}.
    \end{align*}

For an edge $e \in \E(\slab{B}_N)$ pivotal for $V(\slab{B}_N)$, let $t(e) \in \slab{B}_N$ be the endpoint of $e$ joined to $TO$. Writing $F'_N = \{\omega^*: \exists\, \omega \in F_N\mbox{ with }\omega'(\omega)=\omega^*\}$, we find
\begin{align*}
       \sum_{e\in \mathcal{E}(R_N'')}\P_p(\{e &\mbox{ piv. to }V(\slab{B}_N)\})\geq \sum_{e \in \E(R''_N)}\P_p(\{ e \mbox{ piv. to }V(\slab{B}_N)\}\circ \{t(e) \overset{\slab{B}_N}{\longleftrightarrow} LS\})\left(\tfrac{N}{4}\right)^{c_5(\tau)}
        \\&\geq \P_p\left(\bigcup_{e \in \E(R_N'')}\{ e \mbox{ piv. to }V(\slab{B}_N)\}\circ \{t(e) \overset{\slab{B}_N}{\longleftrightarrow} LS\}\right)\left(\tfrac{N}{4}\right)^{c_5(\tau)}
       \\&\geq \sum_{\omega^*\in \,F'_N}\P_p(\omega^*)\left(\tfrac{N}{4}\right)^{c_5(\tau)}
        = \sum_{\omega \in F_N}\P_p(\omega'(\omega))\tfrac{1}{|(\omega')^{-1}(\omega)|}\left(\tfrac{N}{4}\right)^{c_5(\tau)}
        \\&\geq \sum_{\omega \in F_N}\P_p(\omega)\left(\tfrac{p}{1-p}\right)^{5+k}\tfrac{1}{2^{5+k}}\left(\tfrac{N}{4}\right)^{c_5(\tau)}
        =\P_p(F_N)\left(\tfrac{p}{2(1-p)}\right)^{5+k}\left(\tfrac{N}{4}\right)^{c_5(\tau)}
        \\&\geq c_4(\tau)\left(\tfrac{p}{2(1-p)}\right)^{5+k}\left(\tfrac{N}{4}\right)^{c_5(\tau)},
    \end{align*}
where the first equality follows since there are $|(\omega')^{-1}(\omega)|$ configurations that are transformed into the same $\omega'$, and the fourth inequality comes from the fact that the transformation $\omega'$ changes at most $k+5$ edges.

To conclude the proof, it remains to show that $\leq_{I}$ does not depend on the choice of $\gamma\in \Gamma_T\cup \Gamma_S$. Suppose there are two paths $\gamma, \gamma' \in \Gamma_T \cup \Gamma_S$ and $u, u' \in I$ such that $u\leq_\gamma u'$ and $u' \leq_{\gamma'}u$. Then the path $\gamma$ and $\gamma'$ can be split into three paths $\gamma = \gamma_{x,u}\oplus \gamma_{u,u'}\oplus \gamma_{u',z}$ and $\gamma' = \gamma'_{x,u'}\oplus \gamma'_{u',u}\oplus \gamma'_{u,w}$, where $\gamma_{a,b}$ denotes a path joining $a$ to $b$. But if that is the case, then there exists a path $\gamma'' = \gamma_{x,u}\oplus \gamma'_{u,w}\in \Gamma_T\cup \Gamma_S$ that does not use the vertex $u'$, which is absurd. Therefore, the ordering $\leq_{I}$ does not depend on the choice of $\gamma$.
\end{proof}

\section{Proof of Proposition \ref{theoremEdge}}
\label{boundsEdge}

In this section, we prove Proposition \ref{theoremEdge}. Our proof is built on a series of results drawn from Russo-Seymour-Welsh theory (see \cite{Russo1978} and \cite{SEYMOUR1978227}) and also from \cite{tassion}. 

Let $A$, $B$, $S \subset \S_k^+$. If $\omega \in \{A \stackrel{S}{\longleftrightarrow}B\}$, we denote by $\gamma_{A,B}(\omega)$ the smallest open path joining $A$ to $B$ in $S$, according to Definition \ref{pathOrdering}. If no such connection exists, we set $\gamma_{A,B}(\omega)=\emptyset$.

We begin by stating a "gluing lemma", whose proof is omitted here, as it closely follows the argument provided for Theorem 3.8 in \cite{tassion}.

\begin{lema}[Gluing Lemma]\label{GluingLemmaGenerico}
Fix $s>0$.
Let $S,R \subseteq \S^+_k$ be connected non-empty sets such that $S\cap R$ is a box $[x_1,x_2]\times[y_1,y_2]\times[0,k]$ with $x_2-x_1\geq 5$ and $y_2-y_1\geq 5$. Let $A,B \subset S$ and $C,D \subset R$ be such that the projections of $A, B, C, D$ on $\mathbb{Z}_+^2$ are disjoint, and such that the projection (on $\mathbb{Z}_+^2$) of each path in $S$ from $A$ to $B$  intersects the projection of each path in $R$ from $C$ to $D$. Then there exists a continuous increasing function $h:[0,1]\to [0,1]$ with $h(0)=0$, $h(1)=1$, such that for all $(p,q) \in [s,1-s]^2$
\begin{equation}
\label{Eq:gluingLemma}
    \P^\Lambda_{p,q}(\gamma_{A,B}\stackrel{S\cup R}{\longleftrightarrow}C)\geq h(\P^\Lambda_{p,q}(A\stackrel{S}{\longleftrightarrow}B)\wedge \P^\Lambda_{p,q}(C\stackrel{R}{\longleftrightarrow}D)).
\end{equation}
\end{lema}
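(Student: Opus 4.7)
The plan is to adapt the proof of Theorem 3.8 in \cite{tassion} to the slab $\S_k^+$. Write $\alpha=\P^\Lambda_{p,q}(A\stackrel{S}{\longleftrightarrow}B)\wedge \P^\Lambda_{p,q}(C\stackrel{R}{\longleftrightarrow}D)$, and assume $\alpha>0$. Let $\pi\colon(x,y,z)\mapsto(x,y)$ denote the projection of $\S_k^+$ onto $\Z_+^2$. The strategy is to condition on the lowest open path $\gamma_{A,B}$ in $S$ (ordered by $\prec$; cf.\ Definition \ref{pathOrdering}), and then produce, with probability bounded below by a function of $\alpha$, an open connection inside $R$ from this path to $C$, using the projection-intersection hypothesis to guarantee that suitable connections exist.

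Concretely, I would proceed in three steps. (i) Carry out the standard exploration along $\prec$ that determines $\gamma_{A,B}$ while revealing only edges with at least one endpoint in $S$; by construction, the edges in $\E(R)\setminus\E(S)$ remain independent with their original marginals under the conditional measure, and the (boundedly many) revealed edges of $\gamma_{A,B}\cap\E(R)$ are all open. (ii) For a realization $\gamma_{A,B}=\gamma$, set $T(\gamma)=\pi^{-1}(\pi(\gamma))\cap(S\cap R)$. The hypothesis on projections forces every open path in $R$ from $C$ to $D$ to visit $T(\gamma)$, so $\{C\stackrel{R}{\longleftrightarrow}D\}\subseteq \{C\stackrel{R}{\longleftrightarrow}T(\gamma)\}$; applying FKG to the conditional measure (using that the revealed edges in $R$ are all open, hence favorable for increasing events) yields
\begin{equation*}
\P^\Lambda_{p,q}(C\stackrel{R}{\longleftrightarrow}T(\gamma)\mid \gamma_{A,B}=\gamma)\geq \alpha.
\end{equation*}
(iii) Finally, each vertex of $T(\gamma)\cap(S\cap R)$ sits in the same vertical column as some vertex of $\gamma\cap(S\cap R)$ (whose existence follows from $\pi(\gamma)\cap\pi(S\cap R)\neq\emptyset$) and is linked to it by at most $k$ vertical edges, each open with probability at least $s$; this contributes a multiplicative factor $s^k$.

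Combining (i)--(iii) and integrating over the realizations of $\gamma_{A,B}$ gives $\P^\Lambda_{p,q}(\gamma_{A,B}\stackrel{S\cup R}{\longleftrightarrow}C)\geq h(\alpha)$ for an explicit continuous increasing $h:[0,1]\to[0,1]$ with $h(0)=0$, and the required boundary value $h(1)=1$ can be secured by a standard iteration of the same argument. The main obstacle lies in Step (i): the exploration must be designed so that $\gamma_{A,B}$ is measurable with respect to the revealed edges without touching $\E(R)\setminus\E(S)$, and the few revealed edges of $\gamma_{A,B}\cap\E(R)$ must be positive information so that the FKG estimate in Step (ii) applies cleanly. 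The assumption that $S\cap R$ is a rectangle with both sides of length at least $5$ provides enough room for Step (iii) to find an interior column of $T(\gamma)\cap(S\cap R)$.
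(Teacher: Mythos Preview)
Your overall plan coincides with the paper's: the authors omit the proof entirely and simply say it ``closely follows the argument provided for Theorem~3.8 in \cite{tassion}.'' So at the level of strategy there is nothing to compare.

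However, your sketch contains a genuine gap in Steps (i)--(ii). You assert that after exploring $\gamma_{A,B}$ inside $S$, ``the revealed edges in $R$ are all open, hence favorable for increasing events,'' and use this to deduce $\P^\Lambda_{p,q}(C\stackrel{R}{\longleftrightarrow}T(\gamma)\mid\gamma_{A,B}=\gamma)\geq\alpha$ via FKG. This is not correct. Determining that $\gamma_{A,B}=\gamma$ requires certifying that every path $\gamma'<\gamma$ (in the ordering of Definition~\ref{pathOrdering}) has at least one closed edge; the exploration therefore reveals closed edges in $\E(S)$, and some of these may lie in $\E(S\cap R)\subset\E(R)$. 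That is negative information for the increasing event $\{C\stackrel{R}{\longleftrightarrow}T(\gamma)\}$, so the conditional probability need not be bounded below by the unconditional $\alpha$. Your own final paragraph flags a difficulty in Step~(i) but still presumes that the only revealed edges in $\E(R)$ are those of $\gamma_{A,B}$ itself, which is false.

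The same issue undermines Step~(iii): the vertical edges in the column you wish to open belong to $\E(S\cap R)$ and may already have been explored and fixed closed, so the multiplicative factor $s^k$ is not justified as written. The argument in \cite{tassion} resolves these points (roughly, by working directly with increasing events and a suitable FKG application rather than conditioning on the full event $\{\gamma_{A,B}=\gamma\}$), but your sketch does not capture that mechanism.
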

 \begin{remark}\label{remark3} As in Theorem 3.10 of \cite{tassion}, Lemma \ref{GluingLemmaGenerico} also holds when $C$ is the random set $\Gamma_x^n(\omega)$; see  \eqref{DefinitionGamma}.
 \end{remark}

Let $f \in \E(\Z_+^2)$. Recall the definition of the event $A_n(f)$ given in \eqref{DefinitionA}, which indicates that a favored renormalized edge is open. With the aid of Lemma \ref{GluingLemmaGenerico}, we will show the following:

\begin{lema}\label{gluingLemmaEdge}  Fix $s>0$. For all $(p,q) \in [s,1-s]^2$ and for all $n\geq 1$, it holds that
$$\P^{\Lambda}_{p, q}(A_n(f))\geq h_3(\P^\Lambda_{p,q}(D_n)\wedge \P^\Lambda_{p,q}(V_n)\wedge \P^\Lambda_{p,q}(H_n)),$$
where $h_3(x) = h(\min\{x,h(x)\})$, with $h$ given by Lemma \ref{GluingLemmaGenerico}.
\end{lema}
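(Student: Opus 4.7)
Consider $f = \langle x, x+(1,0)\rangle$ horizontal, so $R(f) = HR_n(x)$; the vertical case will be symmetric with $V_n$ replacing $H_n$, and the $\wedge$ with $\P^\Lambda_{p,q}(V_n)$ in the bound then accommodates both. Let $p_1 := \P^\Lambda_{p,q}(D_n)\wedge\P^\Lambda_{p,q}(H_n)$; by monotonicity of $h_3$ it will suffice to prove $\P^\Lambda_{p,q}(A_n(f))\geq h_3(p_1)$. The guiding geometric picture: on $D_n(x)\cap D_n(y)\cap H_n$, projection to $\Z_+^2$ together with the Jordan curve theorem force the lowest horizontal crossing $\Gamma_H$ of $HR_n(x)$ to meet $\Gamma_x^n$ in the middle strip of $B_n(x)$ and $\Gamma_y^n$ in the middle strip of $B_n(y)$. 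The plan is to upgrade each such projected meeting to a genuine three-dimensional open connection \emph{inside} $HR_n(x)$, chaining $\Gamma_x^n\leftrightarrow\Gamma_H\leftrightarrow\Gamma_y^n$ in $HR_n(x)$ — i.e., $A_n(f)$ — by two applications of Lemma \ref{GluingLemmaGenerico} (combined with Remark \ref{remark3}, which permits the random endpoints $\Gamma_x^n,\Gamma_y^n$); this produces the two nested $h$'s appearing in $h_3$.

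\textbf{First gluing.} I will set $S = HR_n(x)$ and $R = B_n(x)$. The overlap $S\cap R$ is the middle strip of $B_n(x)$, a box of the required minimum side length for $n\geq 5$. With $A = LS_n(x), B = RS_n(y)$ (so $\gamma_{A,B} = \Gamma_H$ and $\{A\leftrightarrow B\,\text{in}\,S\} = H_n$) and $C = LB_n(x), D = RB_n(x)$ — replacing $C$ by the random set $\Gamma_x^n$ via Remark \ref{remark3} — the planarity hypothesis in the overlap follows from the Jordan curve theorem: $A$-$B$ arcs must cross it horizontally while $C$-$D$ arcs must cross it vertically, since $LB_n(x)$ and $RB_n(x)$ sit in opposite corners of $B_n(x)$. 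Lemma \ref{GluingLemmaGenerico} will then yield
\[\P^\Lambda_{p,q}\bigl(\Gamma_H\overset{HR_n(x)}{\longleftrightarrow}\Gamma_x^n\bigr)\geq h(p_1).\]
Crucially, the connection can be taken inside $HR_n(x)$ (not merely in $S\cup R$): it consists of a sub-arc of $\Gamma_H\subset S = HR_n(x)$ concatenated with the bridging edges produced by the gluing argument, which live in $S\cap R\subseteq HR_n(x)$ and reach $\Gamma_x^n$ at an overlap vertex. I will denote the resulting event, intersected with $D_n(x)\cap H_n$, by $E_1$, so $\P(E_1)\geq h(p_1)$.

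\textbf{Second gluing (chaining).} I will then take $S' = HR_n(x)\cup B_n(x)$ and $R' = B_n(y)$; the overlap $S'\cap R'$ is the middle strip of $B_n(y)$, again a box. The choices $A' = LB_n(x), B' = RS_n(y)$ will ensure that on $E_1$ the concatenation of the piece of $\Gamma_x^n$ from $LB_n(x)$ to the first-gluing bridge, the bridge itself, and the piece of $\Gamma_H$ to $RS_n(y)$ is an open $A'$-$B'$ path in $S'$; thus $\P(A'\leftrightarrow B'\,\text{in}\,S')\geq\P(E_1)\geq h(p_1)$. I will take $C' = LB_n(y), D' = RB_n(y)$, again replacing $C'$ by the random $\Gamma_y^n$ via Remark \ref{remark3}, so $\P(C'\leftrightarrow D'\,\text{in}\,R') = \P^\Lambda_{p,q}(D_n)\geq p_1$. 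The planarity hypothesis in the new overlap is verified analogously. A second application of Lemma \ref{GluingLemmaGenerico} will give
\[\P^\Lambda_{p,q}\bigl(\gamma_{A',B'}\leftrightarrow\Gamma_y^n\,\text{in}\,S'\cup R'\bigr)\geq h\bigl(\min\{h(p_1),\,p_1\}\bigr)=h_3(p_1).\]
The bridging edges of this second gluing again lie in $S'\cap R'\subseteq HR_n(x)$, so the new connection also stays in $HR_n(x)$; combined with $E_1$, this closes the chain $\Gamma_x^n\leftrightarrow\Gamma_H\leftrightarrow\Gamma_y^n$ inside $HR_n(x)$, which is $A_n(f)$.

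The main obstacle is two-fold. First, at each step the Gluing Lemma a priori produces a connection in the ambient set $S\cup R$, whereas $A_n(f)$ demands the connection live inside $HR_n(x)$; this will be circumvented by choosing $S\supseteq HR_n(x)$ in both applications, so that the bridging edges and the relevant sub-arcs of $\gamma_{A,B}$ all lie in $HR_n(x)$. Second, the chaining must be arranged so that the bound $\P(E_1)\geq h(p_1)$ produced by the first gluing feeds into the second in place of a raw probability $p_1$, giving the composition $h(\min\{p_1,h(p_1)\})=h_3(p_1)$ rather than the weaker FKG-type product $h(p_1)^2$.
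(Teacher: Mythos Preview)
Your chaining through $\Gamma_H$ has a genuine gap at the second gluing. Lemma~\ref{GluingLemmaGenerico} controls the event $\{\gamma_{A',B'}\leftrightarrow\Gamma_y^n\}$, where $\gamma_{A',B'}$ is the \emph{lowest} open path from $A'=\slab{LB}_n(x)$ to $B'=\slab{RS}_n(y)$ in $S'$. This lowest path need not coincide with, nor even be connected to, the particular concatenation $\Gamma_x^n\oplus(\text{bridge})\oplus\Gamma_H$ that you exhibit on $E_1$; in fact the event $\{\gamma_{A',B'}\leftrightarrow\Gamma_y^n\}$ does not even force $D_n(x)$, since an $A'$--$B'$ path in $S'$ can run entirely inside the horizontal strip without visiting $\slab{RB}_n(x)$. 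Hence the second gluing by itself does not give $\Gamma_x^n\leftrightarrow\Gamma_y^n$, and you cannot simply ``combine with $E_1$'' either: the bound $h_3(p_1)$ is on $\P(\gamma_{A',B'}\leftrightarrow\Gamma_y^n)$, not on its intersection with $E_1$.

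The paper circumvents this by choosing $S,A,B$ so that the lemma's distinguished path $\gamma_{A,B}$ \emph{is} $\Gamma_x^n$: take $S=\slab{B}_n(x)$, $A=\slab{LB}_n(x)$, $B=\slab{RB}_n(x)$, and $R=\slab{HR}_n(x)$ with $C=\slab{LS}_n(x)$, $D=\slab{RS}_n(y)$. The first gluing then yields $\P(\Gamma_x^n\leftrightarrow\slab{RS}_n(y)\text{ in }\slab{B}_n(x)\cup\slab{HR}_n(x))\geq h(p_1)$. For the second gluing one takes $S'=\slab{B}_n(y)$, $A'=\slab{LB}_n(y)$, $B'=\slab{RB}_n(y)$ (so $\gamma_{A',B'}=\Gamma_y^n$), and feeds the output of the first step as the $\{C'\leftrightarrow D'\text{ in }R'\}$ input, with $R'=\slab{B}_n(x)\cup\slab{HR}_n(x)$, $C'=\Gamma_x^n$ (random, via Remark~\ref{remark3}), $D'=\slab{RS}_n(y)$. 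The conclusion is then directly $\{\Gamma_y^n\leftrightarrow\Gamma_x^n\}$, which is $A_n(f)$. The point is that in each application the $(S,A,B)$ side is one of the blocks $\slab{B}_n(x),\slab{B}_n(y)$, so that the lowest path singled out by the lemma is precisely the object $\Gamma_x^n$ or $\Gamma_y^n$ appearing in the definition of $A_n(f)$.
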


\begin{proof}
The proof of Lemma \ref{gluingLemmaEdge} consists of two applications of Lemma \ref{GluingLemmaGenerico}. Assume without loss of generality that $f = \langle x, x+(1,0) \rangle$ (the vertical case is analogous, replacing $H_n(x)$ with $V_n(x)$). 

Following the notation in Lemma \ref{GluingLemmaGenerico}, let $S=\slab{B}_n(x)$, $R=\slab{HR}_n(x)$,  $A$ = $\slab{LB}_n(x)$, $B = \slab{RB}_n(x)$, $C = \slab{LS}_n(x)$ and $D = \slab{RS}_n(x+(1,0))$.
Lemma \ref{GluingLemmaGenerico} gives a function $h$ such that
\begin{equation}
    \label{firstGluing}
\P_{p,q}^\Lambda(\Gamma^n_x(\omega)\stackrel{S \cup R}{\longleftrightarrow} D)\geq h(\P_{p,q}^\Lambda(D_n(x))\wedge \P_{p,q}^\Lambda(H_n(x))),
\end{equation}
where $\Gamma^n_x(\omega)$ is the smallest path joining $A$ to $B$ in $S$, as defined in Section \ref{SectionOverview}.

\begin{figure}[ht]
        \centering
        \resizebox{0.7\linewidth}{!}{
            \begin{tikzpicture}

                \draw[line width = 2mm,color=black] (18,-3) rectangle (18,3);
                \draw[color=black,line width = 2mm] (-6,-3)--(-6,3);

                \draw[color=black, ultra thick,loosely dashed] (-6,-6) rectangle (-3,-3);
                \draw[color=black, ultra thick,loosely dashed] (3,3) rectangle (6,6);

                \draw[color=black!75, ultra thick] (-6+12,-6) rectangle (6+12,6);
                \draw[color=black, ultra thick,line width = 1mm] (-6+12,-6) rectangle (-3+12,-3);
                \draw[color=black, ultra thick,line width = 1mm] (3+12,3) rectangle (6+12,6);

                 \draw[color=black,pattern = north west lines, pattern color = black!35] (-6,-3) rectangle (18,3);

                \draw[color=black!50,pattern = north east lines,pattern color = black!25] (-6,-6) -- (6,-6)--(6,-3)--(18,-3)--(18,3)--(6,3)--(6,6)--(-6,6)--(-6,-6);

                
                \node[align = left,color=black!75,scale=5] at (-4.5+12,-4.5) {$A'$};
                \node[align = left,color=black!75,scale=5] at (4.5+12,4.5) {$B'$};

                \node[align = left,color=black!75,scale=5] at (-4.5,-4.5) {$A$};
                \node[align = left,color=black!75,scale=5] at (4.5,4.5) {$B$};
                \node[align=left,color=black!75,scale=5] at (-7.25,0) {$C$};

                 \node[align=left,color=black!75,scale=5] at (21,0) {$D=D'$};
                
                \node[align = left,color=black!75,scale=4] at (-1.25,2){$C' = \Gamma^n_x(\omega)$};
            
                \draw [color=black,loosely dashed] (-2 ,0) to [ curve through ={(3,0)..(7,2)..(10,1)..(12,1)..(14,-2)}] (18,-1);

                \draw [very thick,color=black,line width = 1mm] (-3,-4) to [ curve through ={(-2,-3)..(-4,-2)..(-2,0)..(0,1)..(2,1)}] (4.5,3);

                \draw [very thick,color=black] (-3+12,-4) to [ curve through ={(-2+12,-5)..(-1+12,-2)..(1+12,-1)..(2+12,-2)..(3+12,1)}] (4.5+12,3);

            \end{tikzpicture}
        }
        \caption{The sets $A$, $B$, $C$, $A'$, $B'$, $C'$, and $D'$. In dashed line a path making the event $\{\Gamma_n^{x}(\omega) \overset{S \cup R}{\longleftrightarrow} D\}$. In a continuous line, a path making the event $D_n(x+(1,0))$. The dark grey region is the set $R=HR_n(x)$. The union of the shaded regions is the set $R' = S \cup R$.}
        \label{fig:AplicacaoGL1}
    \end{figure} 

    \FloatBarrier

Also, let $S' = \slab{B}_n(x+(1,0))$ and $R' = \slab{B}_n(x)\cup \slab{HR}_n(x)$. Let $A'$ = $\slab{LB}_n(x+(1,0))$, $B' = \slab{RB}_n(x+(1,0))$, $C' = \Gamma^n_x(\omega)$, and $D' = \slab{RS}_n(x+(1,0))$; see Figure \ref{fig:AplicacaoGL1}. A second application of Lemma \ref{GluingLemmaGenerico} (see also Remark \ref{remark3}) yields
\begin{equation}
    \label{secondGluing}
    \P_{p,q}^\Lambda(\Gamma^n_{x+(1,0)}(\omega)\stackrel{S' \cup R'}{\longleftrightarrow} \Gamma^n_x(\omega))\geq h\left(\P_{p,q}^\Lambda(D_n(x+(1,0))\wedge \P_{p,q}^\Lambda(\Gamma^n_x(\omega)\stackrel{S \cup R}{\longleftrightarrow} D)\right).
\end{equation}

Observe that the left-hand side of \eqref{secondGluing} is exactly $\P_{p,q}^\Lambda(A_n(f))$. Combining it with \eqref{firstGluing}, we find
\begin{align*}    
\P_{p,q}^\Lambda(A_n(f))&\geq h\left[(\P_{p,q}^\Lambda(D_n(x+(1,0))\wedge h\left(\P_{p,q}^\Lambda(D_n(x))\wedge \P_{p,q}^\Lambda(H_n(x))\right)\right]
\\&\geq h\left(\min\left\{\P_{p,q}^\Lambda(D_n)\wedge \P_{p,q}^\Lambda(H_n),h(\P_{p,q}^\Lambda(D_n)\wedge \P_{p,q}^\Lambda(H_n))\right\}\right)
\\&=h_3(\P_{p,q}^\Lambda(D_n)\wedge \P_{p,q}^\Lambda(H_n)),
\end{align*}
since the events $D_n$ and $H_n$ are translation invariant. The calculations are analogous for a vertical edge, yielding the result. 
\end{proof}

Next, we will apply Lemma \ref{GluingLemmaGenerico} to compare the probabilities of the crossing event $H([0,2n]\times[0,n]\times[0,k])$ under the anisotropic measure $\P_{p,q}^\Lambda$ and the isotropic measure $\P_{p_n}$, for some suitable choice of $p_n \in (p,q)$.

\begin{lema}\label{aizenmanGrimmettStep} 
Fix $n\in\mathbb{N}$ and $\lambda<1$. Let $\frac{n^{\lambda}}{2}<t\leq n$, and consider $R_t = [0,2t]\times[0,t]\times[0,k]$. Assume that $[0,n]$ and $[n,2n]$ are both $\lambda$-good intervals. For all $\varepsilon>0$, there exist $\delta = \delta(\varepsilon, n,\lambda)$, $c_{14} = c_{14}(\varepsilon)>0$, and $c_{15}>0$, such that  
    \begin{equation*}\label{eq:DominacaoLema7}
    \P^\Lambda_{p_c-\delta,p_c+\varepsilon}(H(R_t))\geq \P_{p_n}(H(R_t)),\end{equation*}
    where $p_n = p_c+c_{14}n^{-\lambda c_{15}}$.
\end{lema}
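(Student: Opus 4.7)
The plan is to compare the anisotropic measure $\P^\Lambda_{p_c-\delta,\, p_c+\varepsilon}$ with the homogeneous measure $\P_{p_n}$ via an interpolation in parameter space combined with Russo's formula, following the Aizenman--Grimmett strategy adapted as in Proposition 2 of \cite{Brochette}. Set $p(s) = (1-s)p_n + s(p_c - \delta)$ and $q(s) = (1-s)p_n + s(p_c + \varepsilon)$ for $s \in [0,1]$, and define $\phi(s) = \P^\Lambda_{p(s),q(s)}(H(R_t))$, so that $\phi(0) = \P_{p_n}(H(R_t))$ and $\phi(1) = \P^\Lambda_{p_c-\delta,\,p_c+\varepsilon}(H(R_t))$. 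Writing $E_\Lambda = \mathcal{E}(\Lambda \times \mathbb{Z}_+ \times \{0\})$, Russo's formula yields
\begin{equation*}
\phi'(s) = q'(s) \sum_{e \in E_\Lambda \cap \mathcal{E}(R_t)} \P^\Lambda_{p(s),q(s)}(e \text{ piv.}) + p'(s) \sum_{e \in \mathcal{E}(R_t)\setminus E_\Lambda} \P^\Lambda_{p(s),q(s)}(e \text{ piv.}),
\end{equation*}
with $p'(s) = -(p_n - p_c + \delta) < 0$ and $q'(s) = p_c + \varepsilon - p_n > 0$. The goal is then to choose the parameters so that the positive $\Lambda$-contribution dominates uniformly in $s \in [0,1]$, which forces $\phi(1)\geq\phi(0)$.

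The crucial ingredient is an Aizenman--Grimmett-type pivotality comparison: along the interpolation path, there should exist constants $c>0$ and $c_{15}>0$ such that
\begin{equation*}
\sum_{e \in E_\Lambda \cap \mathcal{E}(R_t)} \P^\Lambda_{p(s),q(s)}(e \text{ piv.}) \geq c \, n^{-\lambda c_{15}} \sum_{e \in \mathcal{E}(R_t)\setminus E_\Lambda} \P^\Lambda_{p(s),q(s)}(e \text{ piv.}).
\end{equation*}
The $\lambda$-good hypothesis on the intervals $[0,n]$ and $[n,2n]$ forces every edge of $\mathcal{E}(R_t)$ to lie within graph distance at most $n^\lambda$ of some edge of $E_\Lambda$. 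A finite-energy/local-modification argument then maps any pivotal configuration at a non-enhanced edge to a pivotal configuration at a nearby enhanced edge by flipping a short sequence of intermediate bits; controlling the cost and multiplicity of the modification yields the desired polynomial factor.

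Given this comparison, $\phi'(s) \geq 0$ on $[0,1]$ whenever $(p_c + \varepsilon - p_n)\, c\, n^{-\lambda c_{15}} \geq p_n - p_c + \delta$. Setting $p_n - p_c = c_{14}\, n^{-\lambda c_{15}}$ for a sufficiently small $c_{14} = c_{14}(\varepsilon)$ and then choosing $\delta = \delta(n,\varepsilon,\lambda)$ small enough to absorb the remaining slack delivers $\phi(1) \geq \phi(0)$, which is the required inequality. The main obstacle is the pivotality comparison itself: a naive bit-flip bound would produce a factor exponential in the modification distance, i.e.\ $e^{-O(n^\lambda)}$, which is far too weak compared with the target polynomial $n^{-\lambda c_{15}}$. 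To recover a polynomial factor, one exploits the density of $\Lambda$-columns (at least one per horizontal sub-interval of length $n^\lambda$) jointly with the polynomial one-arm estimate \eqref{CotaUmBraco} available in the sub-correlation-length regime $t\leq n$, essentially arguing that the dominant contribution to the pivotality sum already comes from configurations whose pivotal edge lies on or close to some $\Lambda$-column.
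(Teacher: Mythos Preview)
Your overall architecture matches the paper: interpolate between the homogeneous measure at $p_n$ and the anisotropic measure at $(p_c-\delta,p_c+\varepsilon)$, differentiate with Russo, and show the enhanced-edge pivotality sum controls the non-enhanced one up to a polynomial factor $n^{-\lambda c_{15}}$. You also correctly isolate the real difficulty, namely that a naive local-modification argument over distance $n^\lambda$ costs $e^{-O(n^\lambda)}$ rather than the needed polynomial.

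The gap is in how you propose to upgrade that exponential to a polynomial. Invoking the one-arm \emph{upper} bound \eqref{CotaUmBraco} goes in the wrong direction: what is required is a polynomial \emph{lower} bound on the probability that a prescribed enhanced edge $f\in\mathcal{E}(\partial\mathbf{B}_d)$ (with $d\le n^\lambda$) is pivotal for the connection between two prescribed boundary vertices $a,b$ of $\mathbf{B}_d$, together with closed surfaces isolating $a$ and $b$ from the exterior. The paper does \emph{not} argue that ``the dominant contribution already comes from edges near $\Lambda$-columns''; instead it uses an Aizenman--Grimmett resampling map $\mathcal{K}(\omega,\omega')$ with an \emph{independent} copy $\omega'$ inside $\mathbf{B}_d$, which factorizes the estimate as $\P(f\text{ piv})\gtrsim \P(\mathcal{D}_{a,b})\,\P(\mathcal{G}_{a,b})$. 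The first factor recovers $\P(e\text{ piv})$ up to $|\partial\mathbf{B}_d|^{-2}$, and the second---the event $\mathcal{G}_{a,b}$ that $f$ is pivotal for $a\leftrightarrow b$ inside $\mathbf{B}_d$---is bounded below by $d^{-c}$ via RSW-type crossing estimates at criticality (Theorem~3.1 of \cite{tassion}) combined with the Gluing Lemma applied to cones inside $\mathbf{B}_d$. This is where the polynomial comes from, and it has nothing to do with \eqref{CotaUmBraco}. A further point your outline omits: the $\mathcal{G}_{a,b}$ estimate is proved at $p=p_c$ and then transferred to the whole interpolation path by a Radon--Nikodym comparison, which only works because $|p(s)-p_c|\lesssim (k n^{2\lambda})^{-1}$; this is what forces the specific polynomial size of $\delta$ and of $p_n-p_c$.
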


\begin{proof}
We follow the arguments of \cite{Brochette}, making the necessary extensions for the slab $\S_k^+$. Fix an environment $\Lambda$, as defined in Section \ref{ModelandMainResult}, and write $$Enh(\Lambda) = \E(\Lambda \times \Z_+\times\{0\})$$ for the set of \first{enhanced} edges. Given two edges $e = \langle e_1,e_2 \rangle$ and $f=\langle f_1,f_2 \rangle$ in $\E(\S_k^+)$, we define the distance between them by
$\max_i\{\min_j \|e_i-f_j\|\}$. Fix an edge $e \in \E(R_t)\setminus Enh(\Lambda)$, whose probability of being open under $\P_{p,q}^\Lambda$ is $p$, and let $f \in \mathcal{E}(R_t)\cap Enh(\Lambda)$ be the closest edge to $e$. If there is a choice for $f$, pick it according to some arbitrary order. Since $[0,n]$ and $[n, 2n]$ are $\lambda$-good, the distance between $f$ and $e$ is smaller than $n^\lambda+k$. 

Write $\slab{B}_d$ for a box such that $e \in \E(\slab{B}_d)$, $f \in \E(\frontA\slab{B}_d)$, and $d<n^\lambda$. For simplicity, we assume that $d>200$. Let $B'_d = B_{\frac{d}{2}}$ and $B''_d = B_{d-100}$ be two-dimensional boxes with the same center as $B_d$. Fix two vertices $a,b \in \frontA\slab{B}_d$, and define the events
\begin{align*}
    \mathcal{A}_{a,b}&=\left\{\begin{matrix}
        \exists \mbox{ open path }\gamma \mbox{ from }a \mbox{ to } \{0\}\times[0,t]\times[0,k];\\
        \exists \mbox{ open path }\gamma' \mbox{ from }b \mbox{ to } \{2t\}\times[0,t]\times[0,k];
        \\\gamma \cap \gamma' = \emptyset, (\gamma \cup \gamma')\cap \slab{B}_d=\emptyset
    \end{matrix}
    \right\},
    \\\mathcal{D}_{a,b}&= \{e \mbox{ piv. for } H(R_t)\}\cap \mathcal{A}_{a,b},
    \\\mathcal{G}_{a,b}&=\{f \mbox{ piv. for }a \stackrel{\slab{B}_d}{\longleftrightarrow}b\}\cap \{a \centernot \longleftrightarrow \S_k^+\setminus \slab{B}_d\}\cap \{b \centernot \longleftrightarrow \S_k^+\setminus \slab{B}_d\}.
\end{align*}

Fix $\omega \in \{e$ piv. for $H(R_t)\}$, let $C_{a,b}(\omega)$ be the union of the clusters of $a$ and $b$ inside $\slab{B}_d$ in $\omega$, and $E_{a,b}:=\{\langle u,v \rangle\in \mathcal{E}(\slab{B}_d):u =a \mbox{ or }u=b, v \notin \slab{B}_d\}$. Finally, for a pair of configurations $(\omega, \omega')\in \Omega \times \Omega$, define
$$\mathcal{K}(\omega, \omega')(e'):=\begin{cases}0&\mbox{ if }e'=e \mbox{ and }e \notin C_{a,b}(\omega'),\\
\omega'(e') &\mbox{ if }e' \notin E_{a,b} \mbox{ and at least one endpoint of }e' \mbox{ lies in }C_{a,b}(\omega'),\\
\omega(e') &\mbox{ otherwise.}\end{cases}$$

\begin{claim}\label{claim1} If $(\omega, \omega') \in \mathcal{D}_{a,b}\times \mathcal{G}_{a,b}$, then $\mathcal{K}(\omega, \omega') \in \{f \mbox{ piv. for }H(R_t)\}$.
\end{claim}

\begin{proof}
Indeed, in the new configuration $\mathcal{K}(\omega,\omega')$, the cluster $C_{a,b}$ is the same as in $\omega'$, except for the edges in $E_{a,b}$. Since we close the edge $e$ if it does not belong to $C_{a,b}(\omega')$, and $e$ is pivotal to $H(R_t)$ in $\omega$, there are no horizontal crossings of $R_t$ in $\mathcal{K}(\omega,\omega')$ without passing through $a$ and $b$. Also, since $f$ is pivotal for the connection of $a$ and $b$ inside $\slab{B}_d$, we find that $f$ is pivotal for $H(R_t)$ in $\mathcal{K}(\omega,\omega')$. 
\end{proof}

Claim \ref{claim1} allows us to conclude (as in Inequality (8) in \cite{Brochette}) that 
\begin{equation}\label{CotaFPiv}\P_{p,q}^\Lambda(f \mbox{ piv. for }H(R_t))\geq \P_{p,q}^\Lambda(\mathcal{K}(\mathcal{D}_{a,b}\times \mathcal{G}_{a,b}))\geq (1-p)\P_{p,q}^\Lambda(\mathcal{D}_{a,b})\P_{p,q}^\Lambda(\mathcal{G}_{a,b}).\end{equation}
Also, if $\omega \in \{e \mbox{ piv. for }H(R_t)\}$, there exist $a,b$ such that $\omega\in\mathcal{D}_{a,b}$. Hence,
\begin{equation}
\label{CotaDab}
    \P_{p,q}^\Lambda(\mathcal{D}_{a,b})\geq \frac{1}{|{\frontA\slab{ B}_d}|^2}\P_{p,q}^\Lambda(e\mbox{ piv. for }H(R_t))\geq\frac{1}{(8n^\lambda k)^2}\P_{p,q}^\Lambda(e\mbox{ piv. for }H(R_t)).
\end{equation}

\begin{claim}\label{claim2} There exist positive constants $c_{16}$ and $c_{17}$ such that, if $(\slab{B}_d\setminus \frontA\slab{B}_d)\cap (\Lambda\times \Z_+\times[0,k]) = \emptyset$, then for all $a,b \in \frontA\slab{B}_d$, $f \in \mathcal{E}(\frontA\slab{B}_d)$, and $(p,q) \in [p_c-(kn^{2\lambda})^{-1},p_c]\times(p_c,p_c+\varepsilon)$, it holds that  
\begin{equation}\label{claim_2}
\P_{p,q}^\Lambda(\mathcal{G}_{a,b})\geq c_{16}n^{-\lambda c_{17}}.
\end{equation}
\end{claim}
\begin{proof}
Observe that $\mathcal{G}_{a,b}$ is a cylinder event depending only on the edges of $\E(\slab{B}_{d+1})$, and let $\tilde{\omega}$ denote the restriction of $\omega$ to $\E(\slab{B}_{d+1})$. Since $|p-p_c|\leq (kn^{2\lambda})^{-1}$, $d\leq n^\lambda$, and $|\mathcal{E}(\slab{B}_{n^\lambda})|\leq c_{22}kn^{2\lambda}$ for some constant $c_{22}>0$, we find that
    \begin{align*}
        \P_{p,q}^\Lambda(\mathcal{G}_{a,b})&= \sum_{\tilde{\omega} \in \mathcal{G}_{a,b}}\P_{p,q}^\Lambda(\tilde{\omega})
        \\&=\sum_{\tilde{\omega} \in \mathcal{G}_{a,b}}\P^\Lambda_{p_c,q}(\tilde{\omega})\left(\tfrac{p}{p_c}\right)^{|\{\mbox{open edges of }\slab{B}_{d}\}|}\left(\tfrac{1-p}{1-p_c}\right)^{|\{\mbox{closed edges of }\slab{B}_{d}\}|}
        \\&\leq\sum_{\tilde{\omega} \in \mathcal{G}_{a,b}}\P^\Lambda_{p_c,q}(\tilde{\omega})\left[1+\tfrac{1}{(1-p_c)kn^{2\lambda}}\right]^{|\slab{B}_{d}|}\leq \sum_{\tilde{\omega} \in \mathcal{G}_{a,b}}e^{\tfrac{c_{22}}{1-p_c}}\P^\Lambda_{p_c,q}(\tilde{\omega})
        \\&=e^{\frac{c_{22}}{1-p_c}}\P_{p_c,q}^\Lambda(\mathcal{G}_{a,b}).
    \end{align*}
Therefore, we may assume that $p=p_c$ in the remainder of the proof.

\begin{figure}[ht]
         \centering
         \resizebox{0.6\linewidth}{!}{
             \begin{tikzpicture}

                \draw[very thick, color=black] (2,8)--(11,8);
                \draw[thick, color=black] (2,7)--(11,7);
                \draw[thick,color=black] (2,2)--(10,2);
                \draw[thick,color=black] (10,2)--(10,1);

               \draw[very thick, color=black] (9,8)--(6,2);
                \draw[very thick, color=black] (9,8)--(8,2);
                
                \draw[very thick, color=black] (4,2) rectangle (8,4);
                \draw[very thick, color=black!50] (6,2) -- (6,4);

                \draw[color=black!0,fill=black!50,pattern=north west lines] (4,1.8) rectangle (6,2);
                \draw[color=black!0,fill=black!50,pattern=crosshatch] (6,1.8) rectangle (8,2);

                \draw[color=black!0,fill=black!50,pattern=north west lines] (4,4.2) rectangle (6,4);
                \draw[color=black!0,fill=black!50,pattern=crosshatch] (6,4.2) rectangle (8,4);

                \draw[color=black!0,fill=black!50,pattern=north east lines] (3.8,2) rectangle (4,4);
                
                \draw[color=black!0,fill=black!50,pattern=crosshatch] (8.5,7) rectangle (8.8,6.8);

                \fill[color=black!5,pattern=grid,opacity=0.2] (6,2)-- (8,2)--(8.75,7)--(8.5,7) -- cycle;
                \filldraw[black] (9,8) circle (2pt) node[anchor=south]{$a$};
                \node[align=left,color=black] at (5,1.5) {$K^2_1$};
                \node[align=left,color=black] at (7,1.5) {$K^1_1$};
                
                \node[align=left,color=black] at (4.5,3.5) {$R^2_1$};
                \node[align=left,color=black] at (6.4,3.5) {$R^1_1$};
                
                \node[align=left,color=black] at (8,4.7) {$\overline{C_1}$};
                \node[align=left,color=black] at (7.25,5.7) {$C_1$};
                
                \node[align=left,color=black] at (11.5,8) {$B_d$};  
                \node[align=left,color=black] at (11.5,7) {$B''_d$};  
                \node[align=left,color=black] at (10.5,2) {$B'_d$};  

                \node[align=left,color=black] at (5.3,4.5) {$T^2_1$};                
                \node[align=left,color=black] at (6.9,4.5) {$T^1_1$};
                \node[align=left,color=black] at (3.4,3) {$S_1^{2,2}$};
                \node[align=left,color=black] at (8.6,3) {$S_1^{1,1}$};
                \node[align=left,color=black,scale=0.75] at (5.6,2.5) {$S_1^{2,1} $};
                \node[align=left,color=black,scale=0.75] at (6.6,2.5) {$S_1^{1,2} $};
                
                \node[align=left,color=black] at (8.45,6.3) {$C^*_1$};

             \end{tikzpicture}
             }
     \caption{The sets $C_1$, $R_1^1$, $R_1^2$ and their subsets.}
     \label{fig:WeirdSets}
 \end{figure}
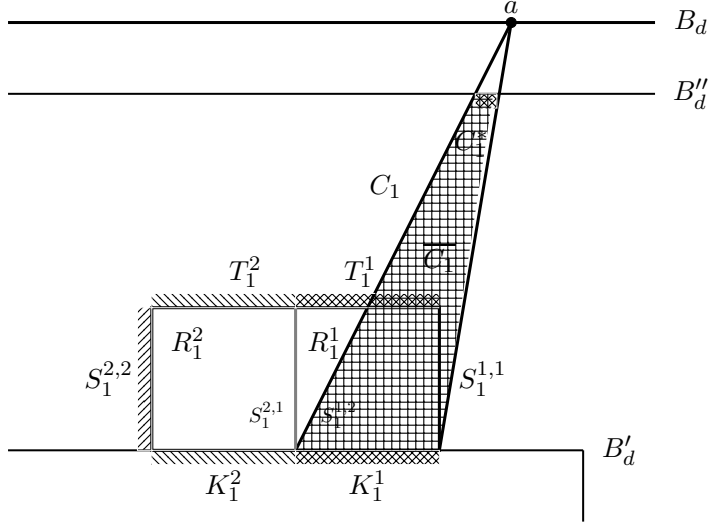

Fix $a,b \in \frontA\slab{B}_{d}$ and consider the edge $f=\langle c,d \rangle \in \mathcal{E}(\frontA\slab{B}_d)$, as fixed at the beginning of the proof. Assume that $a, b, c, d$ are all distinct (the remaining cases are similar). Divide each side of $B_d'$ into 25 intervals of length $d/25$. Select $11$ intervals $\{K_i\}_{i=1}^{11}$, numbered counter-clockwise, with the following properties:
\begin{enumerate}
    \item no interval contains any of the corner vertices of $B_d'$,
    \item the intervals are non-adjacent,
    \item the intervals $K_1$, $K_2$, and $K_3$ are in the same side as the vertex $a$,
    \item the intervals $K_4$, $K_5$, $K_6$, $K_7$, and $K_8$ are in the same side as the edge $f$,
    \item the intervals $K_9$, $K_{10}$, and $K_{11}$ are in the same side as the vertex $b$.
\end{enumerate}

For every $i = 1,\dots,11$, divide the interval $K_i$ into halves and label each half $K_i^1,K_i^2$ in a counter-clockwise fashion. Define for all $i=1,...,11$ and $j=1,2$, the rectangles $R^j_i$ of side $K_i^j$ and height $\frac{d}{100}$, contained in $B''_d\setminus B'_d$, whose side opposite to $K_i^j$ we call $T_i^j$. Let the sides of each rectangle $R_i^j$ be called $S_i^{j,1}$ and $S_i^{j,2}$, respectively, again numbered counter-clockwise. 

For $i=1,...,3$, let $C_i$ be the cone joining $K^1_i$ to $a$, for $i=4,...,8$, let $C_i$ be the cone joining $K_i^1$ to $c$, and for $i=9,10,11$, let $C_i$ be the cone joining $K_i^1$ to $b$. For all $i=1,..,11$, let $\overline{C}_i = C_i \cap B''_d$ and $C^*_d = C_i \cap \frontB B''_d$. We refer the reader to Figure \ref{fig:WeirdSets} for a sketch of the sets above.

Lemma \ref{GluingLemmaGenerico}, together with Theorem 3.1 of \cite{tassion} and standard Russo-Seymour-Welsh techniques, allow us to show the existence of a constant $c_{23}>0$ such that
\begin{equation}
    \label{eq:Polinomiocone1}
    \P_{p_c,q}^\Lambda(\slab{C}_i^*\stackrel{\slab{\overline{C}}_i \cup\slab{R}^1_i\cup\slab{R}^2_i}{\looooongleftrightarrow}\slab{S}_i^{2,2})>d^{-c_{23}}.
    \end{equation}

    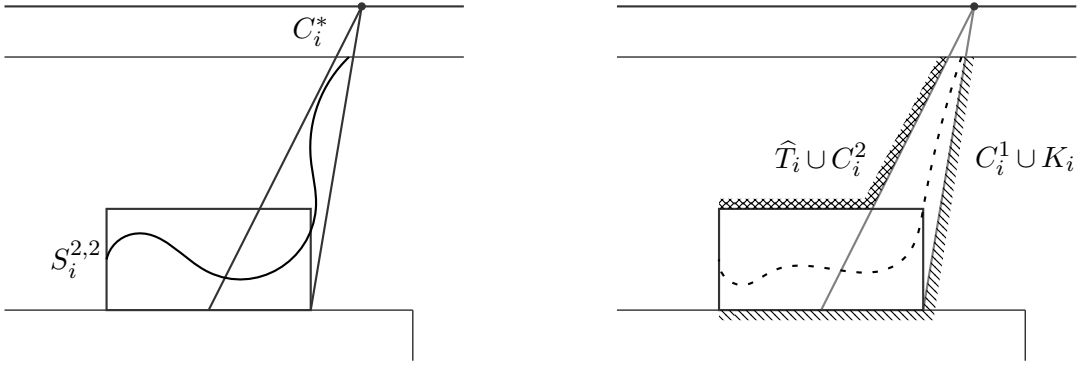
\begin{figure}[ht]
         \centering
         \resizebox{0.9\linewidth}{!}{
             \begin{tikzpicture}
                \draw[very thick, color=black!80] (2,8)--(11,8);
                \draw[thick, color=black!80] (2,7)--(11,7);
                \draw[thick,color=black!80] (2,2)--(10,2);
                \draw[thick,color=black!80] (10,2)--(10,1);
                \draw[very thick, color=black!80] (9,8)--(6,2);
                \draw[very thick, color=black!80] (9,8)--(8,2);
                \draw[very thick, color=black!80] (4,2) rectangle (8,4);
                \filldraw[black!80] (9,8) circle (2pt);
                \draw [very thick,color=black] (8.75,7) to [ curve through ={(8,5)..(8.1,4)..(6,2.75)..(4.5,3.5)}] (4,3);
                \node[align=left,color=black,scale=1.5] at (3.4,3) {$S_i^{2,2}$};
                \node[align=left,color=black,scale=1.5] at (8,7.5) {$C^*_i$};

                \draw[very thick, color=black!80] (14,8)--(23,8);
                \draw[thick, color=black!80] (14,7)--(23,7);
                \draw[thick,color=black!80] (14,2)--(22,2);
                \draw[thick,color=black!80] (22,2)--(22,1);
                \draw[very thick, color=black!50] (21,8)--(18,2);
                \draw[very thick, color=black!50] (21,8)--(20,2);
                \draw[very thick, color=black!80] (16,2) rectangle (20,4);
                \fill[color=black!80,pattern=north west lines] (16,1.8)-- (20.2,1.8)--(21,7)--(20.8,7)--(20,2)--(16,2) -- cycle;
                \fill[color=black!80,pattern=crosshatch] (16,4) --(19,4)--(20.5,7)--(20.3,7) -- (18.85,4.2) -- (16,4.2) --  cycle;
                \filldraw[black!80] (21,8) circle (2pt);
                \node[align=left,color=black,scale=1.5] at (18,5) {$\widehat{T}_i\cup C_i^2$};
                \node[align=left,color=black,scale=1.5] at (22,5) {$C_i^1\cup K_i$};
                \draw [very thick,color=black,loosely dashed] (20.75,7) to [ curve through ={(20,4)..(19.6,3)..(17,2.75)..(16.5,2.5)}] (16,3);
             \end{tikzpicture}
             }
     \caption{On the left, a two-dimensional sketch of the open path connecting $\slab{C}^*_i$ to $\slab{S}_{i}^{2,2}$. On the right, the closed cutset in dashed lines, separating the marked regions.}
     \label{fig:CrossingOpenAndClosed}
 \end{figure}

    Let $C_i^1$ and $C_i^2$ be the sides of the cone $\overline{C}_i$, numbered in a counter-clockwise fashion. Denote by $C^{j,\uparrow}_i = C^j_i\setminus(R_i^1\cup R_i^2)$ and let $\widehat{T}_i \subseteq (T_i^1\cup T_i^2)\setminus \overline{C}_i$ be the top of the rectangle $R_i^1\cup R_i^2$ adjacent to $C_i^2$ (see Figure \ref{fig:CrossingOpenAndClosed}).

    The same construction used to obtain \eqref{eq:Polinomiocone1} can be applied to show the existence of a closed cutset inside $\slab{\overline{C}}_i\cup \slab{R}_i^1\cup \slab{R}_i^2$ that separates $\slab{K}_i \cup \slab{C}_i^1$ and $\slab{\widehat{T}}_i \cup \slab{C}_i^2$. This shows that 
\begin{equation*}
\label{eq:Polinomiocone2}
    \P_{p_c,q}^\Lambda(\slab{K}_i \cup \slab{C}_i^1\stackrel{\slab{\overline{C}}_i \cup\slab{R}^1_i\cup\slab{R}^2_i}{\centernot\looooongleftrightarrow}\slab{\widehat{T}}_i \cup \slab{C}_i^2)>d^{-c_{24}}.    \end{equation*}

We proceed by constructing two disjoint paths in $\slab{B}'_d\cup \left(\bigcup_{i =1}^{11}(\slab{R}_i^1\cup \slab{R}_i^2\cup \overline{\slab{C}}_i)\right)$ joining $\slab{C}_2^*$ to $\slab{C}_5^*$ and $\slab{C}_7^*$ to $\slab{C}_{10}^*$, respectively. A second application of the Russo-Seymour-Welsh technique, combined with Lemma \ref{GluingLemmaGenerico}, ensures that these paths exist and are disjoint from one another and from $\frontB \slab{B}''_d$ with probability larger than $d^{-c_{25}}$, for some constant $c_{25}>0$. Let $\Gamma^*$ be the event that these paths exist.
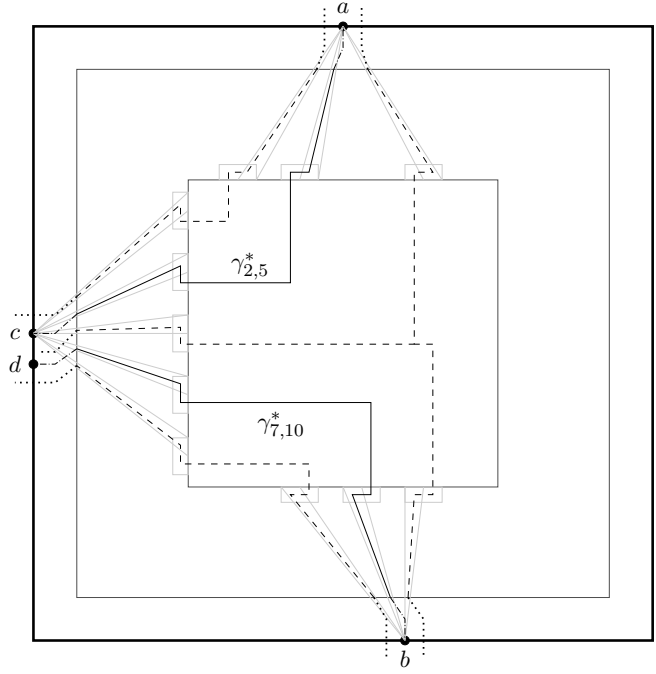
\begin{figure}[!ht]
         \centering
         \resizebox{0.55\linewidth}{!}{
             \begin{tikzpicture}
                \draw[very thick, color=black] (0,0) rectangle (10,10);
                \draw[color=black!70] (0.7,0.7) rectangle (9.3,9.3);
                \draw[color=black!70] (2.5,2.5) rectangle (7.5,7.5);

                \filldraw[black] (5,10) circle (2pt); 
                \node[align=left,black] at (5,10.3) {$a$};
                
                \filldraw[black] (0,5) circle (2pt);
                \node[align=left,black] at (-0.3,5) {$c$};
                
                \filldraw[black] (0,4.5) circle (2pt); 
                \node[align=left,black] at (-0.3,4.5) {$d$};

                \filldraw[black] (6,0) circle (2pt); 
                \node[align=left,black] at (6,-0.3) {$b$};

                \draw[color=black!20] (3,7.5) rectangle (3.6,7.75); \draw[color=black!20] (5,10) -- (3.6,7.5);\draw[color=black!20] (5,10) -- (3.3,7.5);
                \draw[color=black!20] (4,7.5) rectangle (4.6,7.75); \draw[color=black!20] (5,10) -- (4.6,7.5);\draw[color=black!20] (5,10) -- (4.3,7.5);
                \draw[color=black!20] (6,7.5) rectangle (6.6,7.75); \draw[color=black!20] (5,10) -- (6.6,7.5);\draw[color=black!20] (5,10) -- (6.3,7.5);

                \draw[color=black!20] (2.25,2.7) rectangle (2.5,3.3);\draw[color=black!20] (0,5) -- (2.5,3.3);\draw[color=black!20] (0,5) -- (2.5,3);
                \draw[color=black!20] (2.25,3.7) rectangle (2.5,4.3);\draw[color=black!20] (0,5) -- (2.5,4.3);\draw[color=black!20] (0,5) -- (2.5,4);
                \draw[color=black!20] (2.25,4.7) rectangle (2.5,5.3);\draw[color=black!20] (0,5) -- (2.5,5.3);\draw[color=black!20] (0,5) -- (2.5,5);
                \draw[color=black!20] (2.25,5.7) rectangle (2.5,6.3);\draw[color=black!20] (0,5) -- (2.5,6.3);\draw[color=black!20] (0,5) -- (2.5,6);
                \draw[color=black!20] (2.25,6.7) rectangle (2.5,7.3);\draw[color=black!20] (0,5) -- (2.5,7.3);\draw[color=black!20] (0,5) -- (2.5,7);

                \draw[color=black!20] (4,2.25) rectangle (4.6,2.5);\draw[color=black!20] (6,0) -- (4,2.5);\draw[color=black!20] (6,0) -- (4.3,2.5);
                \draw[color=black!20] (5,2.25) rectangle (5.6,2.5);\draw[color=black!20] (6,0) -- (5,2.5);\draw[color=black!20] (6,0) -- (5.3,2.5);
                \draw[color=black!20] (6,2.25) rectangle (6.6,2.5);\draw[color=black!20] (6,0) -- (6,2.5);\draw[color=black!20] (6,0) -- (6.3,2.5);

                \draw [color=black,dashed] (4.575,9.3)-- (3.45,7.625)--(3.15,7.625)--(3.15,6.825)--(2.375,6.825)--(2.375,7.1)-- (0.7,5.6);
                \draw [color=black] (4.85,9.3)-- (4.45,7.625)--(4.15,7.625)--(4.15,5.825)--(2.375,5.825)--(2.375,6.1)-- (0.7,5.315);
                \draw [color=black,dashed] (5.4,9.3)-- (6.45,7.625)--(6.15,7.625)--(6.15,4.825)--(2.375,4.825)--(2.375,5.1)-- (0.7,5.05);

                \draw [color=black,dashed] (6.05,0.7)-- (6.15,2.375)-- (6.45,2.375)-- (6.45,4.825) --(6.15,4.825);
                \draw [color=black] (5.765,0.7)-- (5.15,2.375)-- (5.45,2.375)-- (5.45,3.875) --(2.375,3.875) -- (2.375,4.18) -- (0.7,4.75);
                \draw [color=black,dashed] (5.5,0.7)-- (4.15,2.375)-- (4.45,2.375)-- (4.45,2.875) --(2.375,2.875) -- (2.375,3.18) -- (0.7,4.475);
                
                \draw[color=black,thick, dotted](5.5,0.7) -- (5.7,0.35) -- (5.7,-0.3);
                \draw[color=black,thick,dotted](6.05,0.7) -- (6.3,0.35) -- (6.3,-0.3);
                \draw[color=black,densely dashdotted] (5.765,0.7)--(6,0.35)--(6,0);

                \draw[color=black,thick, dotted](4.575,9.3) -- (4.7,9.65) -- (4.7,10.3);
                \draw[color=black,thick, dotted](5.4,9.3) -- (5.3,9.65) -- (5.3,10.3);
                \draw[color=black,densely dashdotted] (4.85,9.3)--(5,9.65)--(5,10);

                \draw[color=black,thick, dotted](0.7,5.6) -- (0.35,5.3) -- (-0.3,5.3);
                \draw[color=black,densely dashdotted] (0.7,5.315)--(0.35,5)--(0,5);
                \draw[color=black,thick,  dotted](0.7,5.05) -- (0.35,4.7) -- (0.1,4.7);
                \draw[color=black,densely dashdotted] (0.7,4.75)--(0.35,4.5)--(0,4.5);
                \draw[color=black,thick, dotted](0.7,4.475) -- (0.35,4.2) -- (-0.3,4.2);

                \node[align=left,scale=1] at (4,3.5){$\gamma^*_{7,10}$};
                \node[align=left,scale=1] at (3.5,6.1){$\gamma^*_{2,5}$};

            \end{tikzpicture}
             }

    \caption{Two-dimensional sketch of the paths constructed in the argument. The paths $\gamma^*_{2,5}$ and $\gamma^*_{7,10}$ are in continuous lines, the closed cutsets in dashed lines. In dotted lines are the local configurations of closed edges that ensure pivotality of the edge $f = \langle c,d\rangle$, and in dash-dotted lines are the local configurations of open edges.}
    \label{fig:BigCrossings}
\end{figure}

By choosing a configuration $\omega \in \Gamma^*$ and changing the state of the edges on $\slab{B}_d\setminus \slab{B}''_d$ around the vertices $a, b, c$ and $d$, we can create a path $\gamma^*_{2,5}$ joining $a$ and $c$, and a path $\gamma^*_{7,10}$ joining $b$ and $d$. These new paths ensure that the edge $f= \langle c,d \rangle$ becomes pivotal for the connection between $a$ and $b$ inside $\slab{B}_d$ (see Figure \ref{fig:BigCrossings}). Therefore, we obtain 
\begin{equation*}
\label{CotaGab}
    \P_{p,q}^\Lambda(\mathcal{G}_{a,b})\geq e^{\frac{c_{22}}{1-p_c}}\P_{p_c,q}^\Lambda(\mathcal{G}_{a,b})\geq c_{26}\P_{p_c,q}(\Gamma^*)\geq c_{26}d^{-c_{25}}\geq c_{26}n^{-\lambda c_{25}},
\end{equation*}
thus proving \eqref{claim_2} with $c_{16} = c_{26}$ and $c_{17} = c_{25}$.
\end{proof}

Plugging \eqref{CotaDab} and \eqref{claim_2} into \eqref{CotaFPiv}, we obtain
\begin{equation}\label{eq:ComparisonPivotality}\P_{p,q}^\Lambda(f \mbox{ piv. for }H(R_t))\geq c_{18} (n^{\lambda})^{-c_{19}}\P_{p,q}^\Lambda(e \mbox{ piv. for }H(R_t)).\end{equation}

For each  $f \in Enh(\Lambda)$, write $$\Psi(f)\coloneqq\{e \in \E(\slab{R}_t): f \mbox{ is the closest enhanced edge to }e\}.$$
Applying Russo's formula and using \eqref{eq:ComparisonPivotality}, we find that
\begin{align*}
    \frac{\partial }{\partial p}\P_{p,q}^\Lambda (H(R_t)) &= \sum_{e \in \E(\slab{R}_t)\setminus Enh(\Lambda)}\P_{p,q}^\Lambda(e \mbox{ piv. for }H(R_t))
    \\&=\sum_{f \in Enh(\Lambda)}\sum_{e \in \Psi(f)}\P^\Lambda_{p,q}(e \mbox{ piv. for }H(R_t))
    \\&\leq \sum_{f \in Enh(\Lambda)} |\Psi(f)| \max_{e \in \Psi(f)}\left\{\P_{p,q}^\Lambda(e \mbox{ piv. for }H(R_t))\right\}
    \\&\leq \sum_{f \in Enh(\Lambda)}c_{20}kn^{2\lambda}\cdot \frac{n^{\lambda c_{19}}}{c_{18}}\P_{p,q}^\Lambda(f \mbox{ piv. for }H(R_t))
    \\&=c_{21}(n^{\lambda})^{c_{15}}\frac{\partial}{\partial q}\P_{p,q}^\Lambda(H(R_t)),
\end{align*}
where $c_{15} = 2+c_{19}$.

Let $h>0$ be a constant to be fixed later, and consider $p \in [p_c-(kn^{2\lambda})^{-1},p_c)$ so that \eqref{claim_2} holds. Write $p(t) = tp+(1-t)(p_c+h)$ and $q(t) = tq+(1-t)(p_c+h)$ to obtain
\begin{small}
\begin{align*}
    \P_{p,q}^\Lambda(H(R_t))-\P_{p_c+h}^\Lambda(H(R_t)) &= \int_0^1 \frac{d}{dt}\P^{\Lambda}_{p(t),q(t)}(H(R_t))dt
    \\&=\int_0^1 \left[\frac{\partial }{\partial p}\P_{p,q}^\Lambda(H(R_t))p'(t)+\frac{\partial }{\partial q}\P_{p,q}^\Lambda(H(R_t))q'(t)\right]dt
    \\&=\int_0^1 \left[\frac{\partial }{\partial p}\P_{p,q}^\Lambda(H(R_t))(p-(p_c+h))+\frac{\partial }{\partial q}\P_{p,q}^\Lambda(H(R_t))(q-(p_c+h))\right]dt
    \\&\geq \int^1_0 \left\{\left[(p-(p_c+h))c_{21}(n^\lambda)^{c_{15}}+q-(p_c+h)\right]\frac{\partial}{\partial q}\P_{p,q}^\Lambda(H(R_t))\right\}dt,
\end{align*}
\end{small}
where the inequality holds since $p'(t)<0$.

Let $\delta=\frac{\varepsilon}{2(1+c_{21}n^{\lambda c_{15}})}$ and take $h = c_{14}n^{-\lambda c_{15}}$, with $c_{14}<\frac{\varepsilon}{3(1+c_{21})}$. Setting  $p = p_c-\delta$ and $q = p_c+\varepsilon$, we have 
$$(p-(p_c+h))c_{21}(n^\lambda)^{c_{15}}+q-(p_c+h)>0,$$ and hence
$$\P^\Lambda_{p_c-\delta, p_c+\varepsilon}(H(R_t))\geq \P_{p_n}(H(R_t)),$$
yielding the lemma.
\end{proof}

\begin{lema}\label{piecesHighProbability} Let $p_n$ be as in Lemma \ref{aizenmanGrimmettStep}. For all $\vartheta>0$, there exists $\lambda_0 = \lambda_0(\vartheta)>0$ such that, for every $\lambda<\lambda_0$, it holds that
$$\min\left\{\P_{p_n}(H_n), \,\P_{p_n}(V_n), \,\P_{p_n}(D_n)\right\}>1-\vartheta,$$
for $n$ sufficiently large.
\end{lema}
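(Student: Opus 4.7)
The plan is to combine the polynomial upper bound on the correlation length from Theorem \ref{corr_length_2} with iterated applications of the gluing lemma (Lemma \ref{GluingLemmaGenerico}). The key structural point is that $p_n = p_c + c_{14} n^{-\lambda c_{15}}$ was engineered in Lemma \ref{aizenmanGrimmettStep} precisely so that, once $\lambda$ is small enough, $n$ lies well above the correlation length $L_\tau(p_n)$, placing us effectively in a ``supercritical at scale $n$'' regime where crossings of macroscopic rectangles are overwhelming.

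Given $\vartheta>0$, fix a constant $\tau = \tau(\vartheta)>0$ to be chosen later. By Theorem \ref{corr_length_2},
\[
L_\tau(p_n) \;\leq\; c_1(\tau)(p_n-p_c)^{-c_2(\tau)} \;=\; c_1(\tau)\,c_{14}^{-c_2(\tau)}\,n^{\lambda c_{15} c_2(\tau)}.
\]
Set $\lambda_0(\vartheta) := 1/\bigl(2\,c_{15}\,c_2(\tau)\bigr)$. For every $\lambda < \lambda_0$, the right-hand side is $o(n)$, so $n \geq L_\tau(p_n)$ for all $n$ large enough. By the very definition of $L_\tau$, this gives $\P_{p_n}\!\bigl(H([0,2n]\times[0,n]\times[0,k])\bigr)\geq 1-\tau$, and together with Lemma \ref{exchangingDirections} applied at parameter $p_n$ we also obtain that vertical crossings of $n\times 2n$ rectangles at $p_n$ have probability at least $1-u(\tau)$.

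Each of $D_n$, $H_n$, $V_n$ is a crossing event of a region of bounded aspect ratio at scale $n$, with prescribed boundary landing sets. Starting from the two basic rectangle crossings above, a bounded number of applications of Lemma \ref{GluingLemmaGenerico} (together with the RSW-type combinations of \cite{tassion}) produces each of these events: $V_n$ by gluing four vertical crossings of $n\times n$ squares stacked atop one another inside the rectangle $\mathbf{VR}_n(x)$, $H_n$ by gluing four horizontal crossings of $n\times n$ squares placed side by side inside $\mathbf{HR}_n(x)$, and $D_n$ by gluing a horizontal crossing of the lower half of $\mathbf{B}_n(x)$, a vertical crossing of a middle strip, and a horizontal crossing of the upper half. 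Since the gluing function $h$ from Lemma \ref{GluingLemmaGenerico} is continuous and increasing with $h(1)=1$, any fixed finite composition $h^{(k)}$ still satisfies $h^{(k)}(1-\tau)\to 1$ as $\tau\to 0$. Thus, choosing $\tau=\tau(\vartheta)$ small enough that $h^{(k)}(1-\tau) > 1-\vartheta$ for the finitely many gluings required, we obtain
\[
\min\bigl\{\P_{p_n}(H_n),\,\P_{p_n}(V_n),\,\P_{p_n}(D_n)\bigr\} \;>\; 1-\vartheta.
\]

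The main obstacle is the quantitative tracking of this amplification: we need to ensure that the bounded composition of $h$, evaluated at $1-\tau$, exceeds $1-\vartheta$, while simultaneously preserving the condition $\lambda c_{15} c_2(\tau)<1$ needed in the first step. This is achieved by fixing $\tau$ first as a function of $\vartheta$ (and of the number of gluings, which depends only on $k$ and the geometry of the blocks), and then defining $\lambda_0 = \lambda_0(\vartheta)$ via $c_2(\tau)$; the order of quantifiers in the statement then matches the construction.
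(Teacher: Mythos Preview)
Your proposal is essentially correct and follows the same route as the paper: use Theorem~\ref{corr_length_2} to push $n$ beyond $L_\tau(p_n)$ for small $\lambda$, then amplify the resulting $2n\times n$ crossing into the events $H_n,V_n,D_n$ via a bounded number of gluings, choosing $\tau$ in terms of $\vartheta$ first and $\lambda_0$ in terms of $c_2(\tau)$ afterward.

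One small correction: your appeal to Lemma~\ref{exchangingDirections} is misplaced. That lemma gives an \emph{upper} bound $f_p(n,2n)<1-u(\tau)$ for $n<L_\tau(p)$, not a lower bound above the correlation length. You do not need it anyway: under the homogeneous measure $\P_{p_n}$ the model is invariant under swapping the first two coordinates, so the vertical $n\times 2n$ crossing has the same probability as the horizontal $2n\times n$ crossing you already control. The paper exploits this symmetry directly, observing that $\P_{p_n}(H_n)=\P_{p_n}(V_n)=\P_{p_n}\bigl(H([0,4n]\times[0,n]\times[0,k])\bigr)$ and then invoking the aspect-ratio amplification of \cite[Proposition~3.11]{tassion} rather than spelling out four gluings; for $D_n$ it glues from a $4:1$ crossing at half-scale. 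Your version with explicit compositions of $h$ is equally valid.
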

Before we proceed to a formal proof of Lemma \ref{piecesHighProbability}, we outline the main ideas. First, Theorem \ref{corr_length_2} shows that, with an appropriate choice of $\lambda$, one can take $n$ large enough so that $n/2$ exceeds the correlation length at $p_n$, implying that the rectangle $[0,n] \times [0,n/2] \times [0,k]$ is crossed with high probability. Since $n$ also exceeds the correlation length, this also implies that the rectangle $[0,2n]\times[0,n]\times [0,k]$ is crossed with high probability. Second, with the aid of Proposition 3.11 of \cite{tassion}, one can show that the events $H_{n/2}$, $V_{n/2}$, $H_n$, and $V_n$ have high probability. Finally, Lemma \ref{GluingLemmaGenerico}, combined with crossings of $H_{n/2}$ and $V_{n/2}$, yields a bound for $\P_{p_n}(D_n)$. The construction is carried out backwards in order to determine the appropriate size of the correlation length, depending on $\vartheta$.
\begin{proof}[Proof of Lemma \ref{piecesHighProbability}]
    By Lemma \ref{GluingLemmaGenerico}, there exists $\vartheta_1>0$ such that, if 
    \begin{equation}\label{eq:CotaDN}
    \P_{p_n}\left(H\left(\left[0,4\left(\frac{n}{2}\right)\right]\times\left[0,\frac{n}{2}\right]\times[0,k]\right)\right)>1-\vartheta_1,\end{equation}
    then $\P_{p_n}(D_n)>1-\vartheta$.
According to Proposition 3.11 of \cite{tassion}, there exists $\vartheta_2>0$ such that, if 
    \begin{equation}
        \label{eq:UsoCorrLen}
\P_{p_n}\left(H([0,2n]\times\left[0,n\right]\times[0,k])\right)>1-\vartheta_2,    \end{equation}
    then 
    \begin{align*}
        \P_{p_n}(H_n) &=  \P_{p_n}(V_n) =\P_{p_n}(H([0,4n]\times\left[0,n\right]\times[0,k]))
        \\&>\max\{1-\vartheta,1-\vartheta_1\}.
        \end{align*}
        
Applying Theorem \ref{corr_length_2} with $\tau=\vartheta_2$, we obtain 
        \begin{align*}
           L_{\vartheta_2}(p_n)= L_{\vartheta_2}(p_c+c_{14}n^{-\lambda c_{15}})&\leq c_1 c_{14}n^{\lambda c_{15}c_2(\vartheta_2)}.
        \end{align*}
        Therefore, if $\lambda<\frac{1}{c_{15}c_2(\vartheta_2)}=:\lambda_0$, then for some sufficiently large $n_0$, we have $\frac{n_0}{2}\geq c_0L_{\vartheta_2}(p_{n_0})$, for some constant $c_0>0$. Consequently, if $n>n_0$, both inequalities \eqref{eq:CotaDN} and \eqref{eq:UsoCorrLen} hold. This completes the proof.
        \end{proof}

\begin{proof}[Proof of Proposition \ref{theoremEdge}] Fix $\constNossoBeta<1$ and let $\vartheta = \vartheta(
\rho)>0$ be such that $h_3(1-\vartheta)>\constNossoBeta$, where $h_3$ is the function defined in Lemma \ref{gluingLemmaEdge}. Take $\lambda_0 = \lambda_0(\vartheta)$ as given by Lemma \ref{piecesHighProbability}, fix $\lambda<\lambda_0$ and take $\delta_n(\varepsilon,n,\lambda)$ as in Lemma \ref{aizenmanGrimmettStep}. With these choices, for a given $\lambda$-favored edge $f$, we obtain
 \begin{align*}
        \P^{\Lambda}_{p_c-\delta_n,p_c+\varepsilon}(A_n(f))&\geq  h_3(\P^\Lambda_{p_c-\delta_n,p_c+\varepsilon}(D_n)\wedge \P^\Lambda_{p_c-\delta_n,p_c+\varepsilon}(V_n)\wedge \P^\Lambda_{p_c-\delta_n,p_c+\varepsilon}(H_n))
        \\&\geq h_3(\P^\Lambda_{p_n}(D_n)\wedge \P^\Lambda_{p_n}(V_n)\wedge \P^\Lambda_{p_n}(H_n))
        \\&\geq h_3(1-\vartheta)>\constNossoBeta.
    \end{align*}
    The first inequality follows from Lemma \ref{gluingLemmaEdge}, which is independent of the fixed constants. The second inequality holds because the edge $f$ is $\lambda$-favored, ensuring that the events $D_n$, $V_n$, and $H_n$ occur within $\lambda$-good blocks. Finally, the third inequality follows from Lemma \ref{piecesHighProbability} by taking $n$ sufficiently large, since $\lambda<\lambda_0$.
    \end{proof}
\section{Multiscale scheme}\label{secMulti}
In this section, we conclude the proof of Theorem \ref{mainTheorem}. So far, we have constructed a 1-dependent bond percolation model in $\Z_+^2$ defined by the block construction in \eqref{BlockConstruction}.  In this model, favored edges are open with probability arbitrarily close to one, while unfavored edges are open with positive probability. In Section \ref{SectionDominacao}, we show that this 1-dependent percolation model dominates an inhomogeneous independent percolation process on $\Z_+^2$, thereby establishing Proposition \ref{TeoDominacao}. 

 In the independent model mentioned above, all edges are open with high probability, except those incident to randomly selected columns (depending on the original environment $\Lambda$), which are open with positive probability. In Section \ref{SectionMultiscale}, we apply a multiscale scheme based on the approach of \cite{Marcos}, to show that this model percolates.



\subsection{Proof of Proposition \ref{TeoDominacao}}
\label{SectionDominacao}

Let $B_m \subset \Z_+^2$. Fix an ordering $\{f_1, \dots, f_{|\E(B_m)|}\}$ of the renormalized edges in $\E(B_m)$ such that, for some $J$, the edge $f_i$ is favored for $i=1,\dots, J$ and unfavored otherwise. We shall prove Proposition \ref{TeoDominacao} by induction on the number of edges, using the proof strategy of Liggett-Schonmann-Stacey theorem \cite{LSS} for favored edges and employing local modifications alongside the FKG inequality for unfavored edges.

Proposition 1.2 of \cite{LSS} shows the existence of $\constNossoBeta_1<1$ and a function $g:[0,1] \to [0,1]$, with $\lim_{x \to 1}g(x)=1$, such that for $\rho>\constNossoBeta_1$, whenever $\mathbb{Q}^{\Lambda,n}_{p,q}(W_{f_i}=1)>\rho$ for $0 \leq i \leq j$, then 
\begin{equation}\label{eq:DesigLSS}    \Q^{\Lambda,n}_{p,q}\left(W_{f_{j}}=1\Big|W_{f_i}=w_i \mbox{ for }1\leq i < j\right)\geq g(\constNossoBeta),
\end{equation}
for every $w_i \in\{0,1\}$, $i<j$. Since all favored edges appear first in the ordering of $\E(B_m)$, this argument shows that \eqref{eq:DesigLSS} holds for every favored edge $f_j$ with $j\leq J$.


After the set of favored edges is exhausted, we begin to consider unfavored edges. Our goal is to find a constant $c_3(n,\constNossoMu)$ such that, for each unfavored edge $f_{j}$, with $j>J$,
\begin{equation}\label{eq: dominacaoEloRuim}\Q^{\Lambda,n}_{p,q}\left(W_{f_{j}}=1\Big|W_{f_i}=w_i \mbox{ for }i\neq j \right)\geq c_3(1-\constNossoMu)\P^{\Lambda}_{p,q}(\sigma_n(f_{j})=1).\end{equation}

Once \eqref{eq: dominacaoEloRuim} is proven, the desired domination follows by associating independently to each $f \in \E(\mathbb{Z}_+^2)$ a uniform random variable $U_f\sim U[0,1]$, and setting 
$$Y_{f} = \left\{\begin{array}{lll}
    1& \mbox{ if }U_f<g(\constNossoBeta) \mbox{ and }f \mbox{ is favored,}\\
    1& \mbox{ if }U_f<c_3(1-\constNossoMu)\P^{\Lambda}_{p,q}(\sigma_n(f)=1) \mbox{ and } f \mbox{ is unfavored,}
    \\0&\mbox{ otherwise}.&
\end{array}\right.$$


We now prove \eqref{eq: dominacaoEloRuim}. Let $f = \langle x,y \rangle$ be an unfavored edge, and denote by $\varphi_1,...\varphi_6$ the edges incident to $f$. Let $\chi = (\chi_1,...,\chi_6) \in \{0,1\}^6$ be a configuration on these edges. Observe that $\{\sigma_n(f)\}_{f \in \E(B_m)}$, defined in \eqref{BlockConstruction}, depends solely on the states of the edges of $\E(\slab{B}_{(2m+1)n})$ in $\S_k^+$. We shall prove \eqref{eq: dominacaoEloRuim} by conditioning on the elements of $\E(\slab{B}_{(2m+1)n})$ outside the set $\slab{B}_n(x)\cup \slab{B}_n(y)$. To do so, fix $\kappa \in \{0,1\}^{\E(\slab{B}_{(2m+1)n})\setminus \E(\slab{B}_n(x)\cup \slab{B}_n(y))}$ and define
$$P_{\kappa}(\cdot) \coloneqq \P^{\Lambda}_{p,q}\left(\cdot \,| \omega(e) = \kappa(e), \, \forall\, e \in \E(\slab{B}_{(2m+1)n})\setminus \E(\slab{B}_n(x)\cup \slab{B}_n(y))\right).$$
We will show that 
$$P_{\kappa}(W_{f}=1|W_{\varphi_i} = \chi_i, \, \forall\, i=1,...,6) \geq (1-\constNossoMu)c_{3}\mathbb{Q}^{\Lambda,n}_{p,q}(\sigma_n(f)=1),$$
for each possible $(\chi,\kappa) \in \{0,1\}^6\times\{0,1\}^{\E(\slab{B}_{(2m+1)n})\setminus \E(\slab{B}_n(x)\cup \slab{B}_n(y))}$. The desired result will follow by averaging over $\kappa$.

Letting $I_{\chi} = \{i \in \{1,...,6\}: \chi_i=1\}$, we obtain
\begin{align*}
    P_{\kappa}&\left(W_{f}=1\cap \left\{\sigma_n(\varphi_i)Z_{\varphi_i}=\chi_i,\, \forall\, i = 1,...,6\right\}\right)\geq
    \\&\geq P_{\kappa}\left(W_{f}=1\cap\left\{(\sigma_n(\varphi_i)Z_{\varphi_i}=\chi_i)\cap (Z_{\varphi_i}=\chi_i),\, \forall\, i = 1,...,6\right\}\right)
    \\&= P_{\kappa} \left(W_{f}=1\cap \left\{\sigma_n(\varphi_i)=1,\, \forall\, i \in I_{\chi}\right\}\cap \left\{Z_{\varphi_i}=\chi_i, \, \forall\, i=1,...,6\right\}\right)
    \\&\geq \constNossoMu^6 P_{\kappa} \left(W_{f}=1\cap \left\{\sigma_n(\varphi_i)=1,\, \forall i \in I_{\chi}\right\}\right).
\end{align*}

Recall the definition of  $C(f)$ from \eqref{front}. For each $i = 1,..., 6$, define the event $A^f_n(\varphi_i):=\{\sigma_n(\varphi_i)=1\}\cap \{\omega(e)=1: e \in {C}(f)\}$. Since $f$ is unfavored, $\{\sigma_n(f)=1\} = A^*_n(f)$, which is an increasing event. Conditioned on $\kappa$, the events $\{A^f_n(\varphi_i)\}_{i\leq 6}$ are also increasing. Therefore, applying the FKG inequality, we have 
\begin{align}
    \constNossoMu^6 P_{\kappa} \left(W_{f}=1\cap \left\{\sigma_n(\varphi_i)=1,\, \forall\, i \in I_{\chi}\right\}\right)&\geq \constNossoMu^6 P_{\kappa} \left(W_{f}=1\cap \left\{A^f_n(\varphi_i),\, \forall\, i \in I_{\chi}\right\}\right) \nonumber
    \\&\geq \constNossoMu^6 P_{\kappa} \left(W_{f}=1\right) P_{\kappa}\left(A^f_n(\varphi_i),\, \forall\, i \in I_{\chi}\right).
    \label{eq:DesigIntermediariaDominacao}
\end{align}

Let $F = \{\sigma_n(\varphi_i)=1,\,\forall\, i \in I_{\chi}\}$. For every $\omega \in F$, define the function 
\begin{align*}
  g \colon &F \to F\\
  &\omega \mapsto g_{\omega},
\end{align*}
with
$$g_{\omega}(e) = \left\{\begin{matrix}
    \omega(e) &\mbox{ if } e \notin {C}(f),
    \\1 &\mbox{ if }e \in {C}(f).
\end{matrix}\right.$$


Given that $\frac{1}{5}\leq p_c(\Z^3)<p_c(\S_k^+)\leq\frac{1}{2}$, we can assume $\frac{1}{5}< q<p<\frac{1}{2}$. This assumption implies
$$ \max\left\{\tfrac{1-p}{p}, \tfrac{p}{1-p}, \tfrac{q}{1-q}, \tfrac{1-q}{q}\right\}\leq4.$$
Observing that $|\{e \in \E(\slab{B}_{(2m+1)n}):\omega(e)\neq g_{\omega}(e)\}|\leq 8nk$, we conclude that, for all $\omega \in F$, 
\begin{align*}
    P_{\kappa}(\omega)\leq
    4^{8nk}P_\kappa(g_\omega).
\end{align*}
Writing $g_F=\{g(\omega): \omega\in F\}$, we obtain 
\begin{align*}
    P_{\kappa}(F)&=\sum_{\omega \in F}P_{\kappa}(\omega)\leq\sum_{\omega \in F}4^{8nk}P_\kappa(g_\omega)
    \\&= 4^{8nk} \sum_{\omega \in g_F}P_{\kappa}(\omega)|\{\omega': g_{\omega'}=\omega\}|
    \\&\leq (4^{8nk})(2^{8nk}) \sum_{\omega \in g_F}P_{\kappa}(\omega)
    \\&= 8^{8nk} P_{\kappa}(A^f_n(\varphi_i)\,\forall i \in I_{\chi}),
\end{align*}
which, combined with \eqref{eq:DesigIntermediariaDominacao}, gives
\begin{align*}
P_{\kappa}&\left(W_{f}=1\cap \left\{\sigma_n(\varphi_i)Z_{\varphi_i}=\chi_i,\, \forall\, i = 1,...,6\right\}\right)\geq
\\&\geq \constNossoMu^68^{-8nk} P_{\kappa}\left(W_{f}=1\right)P_{\kappa}(\sigma_n(\varphi_i)=1,\, \forall\, i \in I_{\chi})
\\&\geq \constNossoMu^68^{-8nk} P_{\kappa}\left(W_{f}=1\right)P_{\kappa}(W_{\varphi_i}=1,\, \forall\, i \in I_{\chi}).
\end{align*}
Letting $c_{3} = \constNossoMu^68^{-8nk}$ and observing that $\{W_{\varphi_i}=\chi_i,\, \forall\, i=1,...,6\}\subseteq \{W_{\varphi_i}=1,\, \forall\, i\in I_{\chi}\}$, we get
\begin{align*}
    P_{\kappa}(W_{f}=1|W_{\varphi_i} = \chi_i, \, \forall\, i=1,...,6) &= \frac{P_{\kappa}(W_{f}=1\cap\{W_{\varphi_i} = \chi_i, \, \forall\, i=1,...,6\})}{P_{\kappa}(W_{\varphi_i} = \chi_i, \, \forall i=1,...,6)}
    \\&\geq \frac{c_{3}P_{\kappa}\left(W_{f}=1\right)P_{\kappa}\left(W_{\varphi_i}=1,\, \forall\, i\in I_{\chi}\right)}{P_{\kappa}\left(W_{\varphi_i}=1,\, \forall\, i\in I_{\chi}\right)}
    \\&=c_{3}P_{\kappa}(W_{f}=1) = (1-\constNossoMu)c_{3}\P^{\Lambda}_{p,q}(\sigma_n(f)=1).
\end{align*}
Finally, summing over $\kappa$ yields the result.

\subsection{The multiscale} \label{SectionMultiscale}
Let $\mathbb{Q}^{\Lambda,n}_{p,q}$ be the percolation measure governing the sequence $\{Y_f\}_{f \in \E(\Z_+^2)}$, as defined in Proposition \ref{TeoDominacao}. The final step of the proof of Theorem \ref{mainTheorem} is to show that, for sufficiently large $n$, the renormalized model under $\mathbb{Q}^{\Lambda,n}_{p,q}$ satisfies \eqref{endMultiscale}. 

The percolation model on the square lattice examined in \cite{Marcos} is constructed as follows: columns are selected according to a renewal process in which the inter-arrival times have finite moment of order $1+\vartheta$, for some $\vartheta>0$. All vertical edges within selected columns and all horizontal edges are open with probability $p^*$. The remaining edges are closed. The authors of \cite{Marcos} show that percolation occurs whenever $p^*>p_0$, for some sufficiently large $p_0$. Their proof relies on a multiscale argument, constructing an infinite cluster through successive crossings of overlapping rectangles. The condition on the $1+\vartheta$ moment of the inter-arrival times ensures a decoupling inequality, allowing the authors to show that "good regions" are highly likely to occur. It is then shown that within these ``good regions" the probability of rectangle crossings becomes arbitrarily large, provided the parameter $p^*$ is sufficiently large.

Regarding our model, denote by $\Delta_i = \mathbbm{1}_{\{i \in \Lambda\}}$ the indicator function of the event that a column belongs to the environment $\Lambda$. For $\constDecaimento>2$, the renewal process generating $\Lambda$ has the property that $\mathbb{E}(\xi^{2})<\infty$. Hence, Lemma 2.1 of \cite{Marcos} shows that there exists a constant $c_{27} = c_{27}(\xi)$ such that, for all $m,r \in \Z_+$, and each pair of events $A,B$ measurable with respect to  $\sigma(\Delta_i:  0\leq i\leq m)$ and $\sigma(\Delta_i:  i\geq m+r)$, respectively, we have the decoupling inequality
\begin{equation}
\label{eq:Decoupling}
    \nu_\constDecaimento(A \cap B)\leq \nu_\constDecaimento(A)\nu_\constDecaimento(B)+c_{27}r^{-1}.
\end{equation}

Let $\Delta^n_i$ be the indicator that the interval $I_n^i$ is $\lambda$-good. Since the sequence $\{\Delta_i\}_{i\in \Z_+}$ is stationary, it follows that $\{\Delta^n_i\}_{i \in \Z_+}$ is also stationary. Also, note that if $A \in \sigma(\Delta^n_j: 0\leq j \leq m) = \sigma(\Delta_i: 0\leq i < (m+1)n)$ and $B \in \sigma(\Delta^n_j: j \geq m+r) = \sigma(\Delta_i:i \geq (m+r)n)$, then we may apply \eqref{eq:Decoupling} to show that  
\begin{equation}
\label{eq:DecouplingRenormalized}
        \nu_\constDecaimento(A \cap B)\leq \nu_\constDecaimento(A)\nu_\constDecaimento(B)+c_{27}(rn)^{-1}\leq \nu_\constDecaimento(A)\nu_\constDecaimento(B)+c_{27}r^{-1}.
\end{equation}


As in the multiscale scheme of \cite{Marcos}, let $\{L_m\}_{m \geq 1}$ be a sequence of numbers, henceforth called \first{scales}. Fix $\constMarcosAlfa \in (0, 1]$, $\constMarcosGamma \in \left(1,1+\tfrac{\constMarcosAlfa}{\constMarcosAlfa+2}\right)$, $\constMarcosMu \in \left(\tfrac{1}{\constMarcosGamma},1\right)$, and 
\begin{equation}\label{semideia}\constMarcosBeta \in (\constMarcosGamma \constMarcosMu-\constMarcosGamma +1,1).
\end{equation} Given an initial value $L_0$, define $\{L_m\}_{m\in\Z_+}$ recursively by
$$L_m = L_{m-1}\lfloor L_{m-1}^{\constMarcosGamma-1} \rfloor,\,\,m\geq 1.$$
For all $m\geq 0$, consider the intervals $J^m_i = [iL_m,(i+1)L_m)$ on the renormalized lattice.  Write $\ell_{i,m}= \{j \in \mathbb{N}: J_j^{m-1}\cap J_i^{m}\neq \emptyset\}$ and observe that $J_i^m = \bigcup_{j \in \ell_{i,m}} J_j^{m-1}$. 

We label each interval $J^m_i$ as \first{strong} or \first{weak}, inductively on $m\geq 0$. For $m=0$, the set $J^0_i$ consists of $L_0$ intervals of length $2n$ of the original lattice. We say the interval $J^0_i$ is strong if $I_n^j$ is $\lambda$-good for all $j\geq 0$ such that 
$I_n^j \cap [iL_02n,(i+1)L_02n)\neq \emptyset$; otherwise, we call it weak. For $m\geq 1$, an interval $J^m_i$ will be labeled weak if at least two non-consecutive elements of $\{J^{m-1}_j: j \in \ell_{i,m}\}$ are weak. Otherwise, $J^m_i$ will be called strong. 

Our definition of a strong interval in the first scale requires that all columns are $\lambda$-good in that interval (which is possible by taking $n$ large), while in \cite{Marcos} the equivalent notion requires only that at least one column is selected. Such a stronger requirement is necessary to show, with the aid of Propositions \ref{theoremEdge} and \ref{TeoDominacao}, that all edges in the block $J_i^0$ are open with high probability. However, our notion of strong intervals does not imply that our model dominates stochastically the one in \cite{Marcos}. In weak blocks, the probability of opening renormalized edges in our model, although positive, may be small, whereas in \cite{Marcos} all horizontal edges are open with probability $p^*$, which will be close to $1$. Nevertheless, we shall see that this can be circumvented with some effort.

 Denote by $p_{G}$ and $p_{B}$ the probabilities that favored and unfavored edges are open, respectively, in the independent model given by $\{Y_f\}_{f \in \E(\Z^2_+)}$. According to Proposition \ref{theoremEdge}, an appropriate choice of $\lambda$ gives $p_G \to 1$ as $n\to \infty$. Note also that although $p_B \to 0$ as $n \to \infty$, we still have $p_B\geq (c_{28})^n$ for some constant $c_{28}>0$ (see \eqref{BlockConstruction} and Proposition \ref{TeoDominacao}).

Let $L_0$ be large enough to satisfy the following properties:
\begin{itemize}
        \item[i)] $L_0^{\constMarcosGamma-1}\geq 3$,
    \item[ii)] $L_0^{c_{29}}>1+c_{27}$, where $c_{29} = 2+2\constMarcosAlfa-\constMarcosGamma\constMarcosAlfa-2\constMarcosGamma$.
\end{itemize}
With this choice of initial scale $L_0$, the probability of finding weak intervals decays rapidly. Fix $\constDecaimento_1>2$ such that $\frac{1+\constMarcosAlfa}{\frac{1}{2}\constDecaimento_1-1}<\constMarcosGamma \constMarcosMu-1$ and write 
$$p_m=\nu_{\constDecaimento_1}(J_i^m\mbox{ is weak}),\,\,m\geq 0.$$
We have the following lemma:
\begin{lema}\label{LemaDoMarcos}Let $\constDecaimento>\constDecaimento_1$ and $\lambda>\frac{1}{\constDecaimento}$. There exists $n$ large enough such that
$$p_m\leq L_m^{-\constMarcosAlfa},$$
for every $m\in\Z_+$.
\end{lema}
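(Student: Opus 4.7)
The plan is to prove Lemma~\ref{LemaDoMarcos} by induction on $m\ge 0$.

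For the base case, I would note that $J_i^0$ is strong exactly when each of the $L_0$ sub-intervals $I_n^j$ contained in it is $\lambda$-good; combining Proposition~\ref{goodBlocksProposition} with a union bound yields
\begin{equation*}
p_0 \le 3cL_0\,n^{1-\phi\lambda}.
\end{equation*}
Since $\phi\lambda > 1$ (because $\lambda>1/\phi$), taking $n$ sufficiently large forces $p_0\le L_0^{-\alpha}$.

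For the inductive step, assuming $p_{m-1}\le L_{m-1}^{-\alpha}$, I would bound $\{J_i^m \text{ is weak}\}$ by the union, over unordered pairs $\{j_1,j_2\}\subset\ell_{i,m}$ with $|j_1-j_2|\ge 2$, of $\{J_{j_1}^{m-1}\text{ weak}\}\cap\{J_{j_2}^{m-1}\text{ weak}\}$. For any such pair, the two events depend on disjoint ranges of the stationary sequence $\{\Delta^n_i\}$ separated by at least $L_{m-1}$ renormalized indices, so applying the decoupling inequality~\eqref{eq:DecouplingRenormalized} with $r=L_{m-1}$ gives
\begin{equation*}
\nu_\phi\bigl(\{J_{j_1}^{m-1}\text{ weak}\}\cap\{J_{j_2}^{m-1}\text{ weak}\}\bigr)\le p_{m-1}^2 + c_{27}/L_{m-1}.
\end{equation*}
Since $|\ell_{i,m}|=\lfloor L_{m-1}^{\gamma-1}\rfloor$, summing over the at most $L_{m-1}^{2(\gamma-1)}$ relevant pairs and using the inductive hypothesis produces the recursive bound
\begin{equation*}
p_m \le L_{m-1}^{2(\gamma-1)-2\alpha} + c_{27}\,L_{m-1}^{2(\gamma-1)-1}.
\end{equation*}

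To close the induction I would use $L_m\le L_{m-1}^{\gamma}$, so that $L_m^{-\alpha}\ge L_{m-1}^{-\alpha\gamma}$, and reduce the desired bound $p_m\le L_m^{-\alpha}$, after dividing by $L_{m-1}^{-\alpha\gamma}$, to
\begin{equation*}
L_{m-1}^{-c_{29}} + c_{27}\,L_{m-1}^{-c_{29}+2\alpha-1}\le 1,
\end{equation*}
with $c_{29}=2+2\alpha-\gamma\alpha-2\gamma$ as in condition (ii). The constraint $\gamma<1+\alpha/(\alpha+2)$ is precisely what guarantees $c_{29}>0$, and the hypothesis $L_0^{c_{29}}>1+c_{27}$ closes the inequality for every $L_{m-1}\ge L_0$. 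The main obstacle I expect is the exponent bookkeeping in this last step: the geometric growth $L_m\approx L_{m-1}^\gamma$ has to simultaneously absorb the quadratic improvement $p_{m-1}^2$ and the $c_{27}/L_{m-1}$ tail of the decoupling, and this is exactly what the explicit range prescribed for $\gamma$, together with the lower bound on $L_0$, is designed to provide.
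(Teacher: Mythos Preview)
Your proposal is correct and follows exactly the paper's approach: the base case is handled identically (union bound plus Proposition~\ref{goodBlocksProposition}, then choose $n$ large so that $3cL_0n^{1-\lambda\phi}\le L_0^{-\alpha}$), and the induction step---union bound over non-adjacent weak pairs, decoupling inequality~\eqref{eq:DecouplingRenormalized}, then close using the constraint on $\gamma$ and condition~(ii) on $L_0$---is precisely the argument the paper defers to \cite{Marcos} without reproducing. Your flagged concern about the exponent bookkeeping is exactly the point where conditions~(i) and~(ii) are invoked.
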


\begin{proof} The proof follows by induction. For $m=0$, the union bound and Proposition \ref{goodBlocksProposition} gives
$$\nu_{\constDecaimento}(J_i^0 \mbox{ is weak})=\nu_{\constDecaimento}(J_0^0 \mbox{ is weak}) =\nu_{\constDecaimento}\left(\bigcup_{j=0}^{L_0-1} \{I_n^j \mbox{ is }\lambda\mbox{-bad}\}\right)\leq 3cL_0n^{1-\lambda\constDecaimento}.$$
Since $\lambda>\tfrac{1}{\constDecaimento}$, the claim follows taking \begin{equation}n>\frac{1}{3c}L_0^{\frac{1+\constMarcosAlfa}{\lambda \constDecaimento-1}}.\label{condicaoAmbiente}\end{equation}
Under the conditions (i) and (ii) on $L_0$, the induction step follows exactly as in \cite{Marcos}, replacing their Lemma 2.1 by the decoupling in \eqref{eq:DecouplingRenormalized}.
\end{proof}

Next, we show that the probability of certain crossing events in strong regions is large. Before presenting the details, we need to introduce further notation. Let $H_0 = 2\exp(L_0^\constMarcosMu)$ and $H_m = 2 \lceil \exp(L_m^\constMarcosMu) \rceil H_{m-1}$,  $m\geq 1$. Consider the crossing events (see also Equations (33) and (34) in \cite{Marcos})
\begin{equation}
    C_{i,j}^m = H((J_i^m\cup J_{i+1}^m)\times [jH_m, (j+1)H_m)),
\end{equation}
and
\begin{equation}
D_{i,j}^m = V(J_i^m \times [jH_m, (j+2)H_m)).
\end{equation}
To simplify notation, denote by $\P_{p_G,p_B}$ the percolation measure under which favored (unfavored) edges are open with probability $p_G$ ($p_B$), independently. Define the function $q_m(p_G,p_B)$, as in Equation (36) of \cite{Marcos}, as
$$q_m(p_G,p_B) = \max\left\{\max_{\tiny\substack{\Lambda: J^m_j \mbox{ and }\\J_{i+1}^m \mbox{ are strong}}} \P_{p_G,p_B}((C_{i,j}^m)^c), \max_{\tiny\Lambda: J_i^m \mbox{ is strong}}\P_{p_G,p_B}((D^m_{i,j})^c)\right\}.$$

We need to prove that $q_m(p_G,p_B)\leq \exp(-L_m^\constMarcosBeta)$ for all $m\geq 0$, with $\beta$ as in \eqref{semideia}. We proceed by induction on $m$, with the initial step of the argument given by the following lemma.
\begin{lema}   \label{cotaMarcos} 
There exist $L^*\in\mathbb{N}$ and $s$ sufficiently close to 1, say $s>s_0$, such that
\begin{equation}q_0(s,p_B)\leq \exp(-L_0^\constMarcosBeta),\label{eq:LemaL0ePG}\end{equation}
for all $L_0>L^*$ and for all $p_B\in [0,1]$.
\end{lema}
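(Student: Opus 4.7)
The plan is to reduce the events $C_{i,j}^0$ and $D_{i,j}^0$ in strong blocks to standard Bernoulli crossing events, and then apply a Peierls/dual-path argument. The setting is simple enough that no heavy machinery is needed: we only use that $H_0$ grows super-polynomially in $L_0$.

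First, I would observe that if both $J_i^0$ and $J_{i+1}^0$ are strong (resp.\ if $J_i^0$ is strong), then every renormalized edge appearing in $C_{i,j}^0$ (resp.\ $D_{i,j}^0$) is $\lambda$-favored. Indeed, any renormalized edge $\langle x,y\rangle$ with both endpoints in $(J_i^0\cup J_{i+1}^0)\times\mathbb{Z}_+$ has its $x$-coordinates corresponding to $\lambda$-good intervals $I_n^{\cdot}$, so $B_n(x)\cup B_n(y)\subseteq U_n$. Hence, under $\P_{s,p_B}$ (the independent measure on $\mathbb{Z}_+^2$ furnished by Proposition \ref{TeoDominacao}), all the relevant edges are open independently with probability $s$. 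The task thus reduces to bounding the failure probability of a horizontal (resp.\ vertical) crossing of a $2L_0\times H_0$ (resp.\ $L_0\times 2H_0$) rectangle in homogeneous Bernoulli$(s)$ bond percolation.

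Next, I would apply the classical Peierls/dual-path argument. If the horizontal crossing of the rectangle $(J_i^0\cup J_{i+1}^0)\times[jH_0,(j+1)H_0)$ fails, then by planar duality there exists a self-avoiding path of primal-closed edges in the dual lattice connecting the top and bottom sides of the rectangle, of length at least $H_0$. Since the number of self-avoiding paths of length $\ell$ starting from a fixed dual edge is at most $4\cdot 3^{\ell-1}$, and there are $O(L_0)$ admissible starting positions along the top side,
$$\P_{s,p_B}\bigl((C_{i,j}^0)^c\bigr)\leq c L_0 \sum_{\ell\geq H_0}(3(1-s))^\ell\leq c' L_0\,(3(1-s))^{H_0},$$
whenever $3(1-s)<1$. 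The symmetric bound for vertical crossings reads
$$\P_{s,p_B}\bigl((D_{i,j}^0)^c\bigr)\leq c' H_0\,(3(1-s))^{L_0}.$$

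To conclude, I would pick $s_0\in(2/3,1)$ so that $3(1-s)<1$ for every $s>s_0$, and set $a:=|\log(3(1-s))|>0$. Recalling that $H_0=2\exp(L_0^\mu)$, the first bound is of order $\exp(-2a\exp(L_0^\mu)+\log L_0)$, which is doubly-exponentially small in $L_0^\mu$ and therefore lies well below $\exp(-L_0^\beta)$ for $L_0$ large. The second bound is of order $\exp(L_0^\mu-aL_0)$; since both $\mu<1$ and $\beta<1$, the linear term $aL_0$ dominates $L_0^\mu+L_0^\beta$ for large $L_0$, so this bound is also below $\exp(-L_0^\beta)$. Taking $L^*$ large enough for both estimates to hold uniformly yields the lemma. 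The main ``difficulty'' is really only to notice that the scales have been chosen so that $H_0$ beats any polynomial in $L_0$; there is no serious technical obstacle, as this is the standard initialization of the multiscale induction.
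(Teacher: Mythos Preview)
Your proposal is correct and follows essentially the same Peierls/dual-path approach as the paper. The paper's proof is more compressed: it writes only the dominant bound $2\lceil\exp(L_0^{\mu})\rceil\,\exp(-L_0/\xi(1-s))$ (which corresponds to your second estimate for $(D_{i,j}^0)^c$, the bound for $(C_{i,j}^0)^c$ being strictly smaller) and phrases the exponential decay via the correlation length $\xi(1-s)$ rather than the explicit constant $\lvert\log(3(1-s))\rvert$, but the argument is the same.
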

\begin{remark}Observe that, for $m=0$, the quantity $q_0(p_G,p_B)$ is independent of $p_B$, as the events are related to favored edges only.
\end{remark}

\begin{proof}[Proof of Lemma \ref{cotaMarcos}]
Let $$\xi(s) =\lim_{t \to \infty }-\frac{t}{\log (\P_s(o \longleftrightarrow \partial B_t))}$$ be the correlation length at parameter $s$. A Peierls-type argument gives
\begin{align}\label{decreasing}
    q_0(s,p_B)&\leq 2\lceil \exp(L_0^\constMarcosMu) \rceil \P_{1-s}(\mbox{one-arm of length }L_0)\nonumber
    \\&\leq 2\lceil\exp(L_0^\constMarcosMu) \rceil e^{-\frac{1}{\xi(1-s)}L_0}.
\end{align}
For $L_0$ large enough, say $L_0>L^*$, the bound in \eqref{decreasing} is decreasing in $L_0$. Moreover, taking $s$ large enough (depending on $L^*$) we find
$$2\lceil\exp(L_0^\constMarcosMu) \rceil e^{-\frac{1}{\xi(1-s)}L_0}\leq \exp(-L_0^\constMarcosBeta),$$
as required.
\end{proof}

We shall make $p_G$ large enough to satisfy Lemma \ref{cotaMarcos} by choosing $n$ appropriately. Note that, unlike \cite{Marcos}, when $n$ increases, the parameter $p_B$ decreases. Thus, fine control of the size $n$ is needed.



The relationship between the parameter $p_B$ and the crossing events of \cite{Marcos} is described in Equation (43) of \cite{Marcos}, where it is required that 

\begin{equation*}
    \exp\left[L_k^{\constMarcosGamma \constMarcosBeta}-\exp\left(9\ln(p_B)L_k+\left(\frac{2}{3}\right)^\constMarcosMu L_k^{\constMarcosGamma \constMarcosMu}\right)\right]<1.
\end{equation*}
Our bound $p_B\geq (c_{28})^n$ yields the constraint 
\begin{equation}
    n\leq c_{30}(L_0^{(1)})^{\constMarcosGamma \constMarcosMu-1},
    \label{conditionPercolacao}
\end{equation}
for some constant $c_{30} = c_{30}(\constMarcosGamma, \constMarcosAlfa, \constMarcosMu, \constMarcosBeta,c_{28})>0$.

If we are to succeed with the multiscale scheme of \cite{Marcos}, the values of $\lambda, \constDecaimento, L_0$, and $n$ must be selected such that both conditions \eqref{condicaoAmbiente} and \eqref{conditionPercolacao} are met. Once these conditions are satisfied, the argument follows unchanged, leading to the desired result. Let us explain how the parameters are chosen.

\textbf{Step 1:} Choose $L_0^{(1)}$ sufficiently large to satisfy conditions (i), (ii), and  
\begin{enumerate}
    \item[iii)] $\left(1-4(L_0^{(1)})^{\constMarcosGamma-1}e^{-(L_0^{(1)})^\constMarcosBeta}\right)>e^{-1}$,
    \item[iv)]$\frac{4}{1-e^{-1}}\exp\left((L_0^{(1)})^{\constMarcosMu \constMarcosGamma}+(L_0^{(1)})^{\constMarcosGamma}+(L_0^{(1)})^{\constMarcosBeta \constMarcosGamma}-\frac{(L_0^{(1)})^{\constMarcosBeta+\constMarcosGamma-1}}{24}\right)<1$.
\end{enumerate}
Requirements (iii) and (iv) are necessary for the induction step of the proof (see (42) and (45) in \cite{Marcos}).

\textbf{Step 2: }Consider the crossing events at the initial scale $L_0^{(1)}$ and let $s_0$ be large enough to satisfy Lemma \ref{cotaMarcos}. 
We need to show that, given $\varepsilon>0$, there exist $\lambda$, $\delta_n$, and $n$ sufficiently large such that 
$$p_G\coloneqq\mathbb{Q}^{\Lambda,n}_{p_c-\delta_n,p_c+\varepsilon}(Y_f=1)>s_0.$$

Let $\constNossoBeta_0$ be such that $g(\constNossoBeta_0)>s_0$ and $\constNossoMu$ sufficiently small such that $\frac{\constNossoBeta_0}{1-\constNossoMu}<1$. By Proposition \ref{TeoDominacao}, it suffices to show that 
$$\mathbb{Q}^{\Lambda,n}_{p_c-\delta_n,p_c+\varepsilon}(\sigma_n(f)=1)>\frac{\constNossoBeta_0}{1-\constNossoMu},$$ 
for $n$ sufficiently large. By Proposition \ref{theoremEdge}, for each $\varepsilon>0$, there exists $\lambda=\lambda(\constNossoBeta_0)>0$, $\delta_n(\varepsilon,\lambda)>0$, and a large $n_1$ such that 
\begin{equation}\mathbb{Q}^{\Lambda,n}_{p_c-\delta_n,p_c+\varepsilon}(W_f=1)=(1-\constNossoMu)\mathbb{Q}^{\Lambda,n}_{p_c-\delta_n,p_c+\varepsilon}(\sigma_n(f)=1)>\constNossoBeta_0,\label{DefBeta0}\end{equation}
for all $n\geq n_1$.

\textbf{Step 3: } Given $\constNossoBeta_0, \constNossoMu, \lambda, \delta_n$, and $n_1$ fixed in the previous step, choose $\constDecaimento>\constDecaimento_1$ such that $\lambda>\frac{1}{\constDecaimento}$ and $\frac{1}{3c}(L_0^{(1)})^{\frac{1+\constMarcosAlfa}{\lambda \constDecaimento -1}}<c_{30}(L_0^{(1)})^{\constMarcosGamma \constMarcosMu-1}$. Let $I =  \left(\frac{1}{3c}(L_0^{(1)})^{\frac{1+\constMarcosAlfa}{\lambda \constDecaimento-1}}, c_{30}(L_0^{(1)})^{\constMarcosGamma \constMarcosMu-1}\right)$ and choose $L_0^*$ and $n$ according to the following algorithm:
\begin{enumerate}
    \item  If $n_1 \in I$, take $L_0^*=L_0^{(1)}$ and $n=n_1$,
    \item If $n_1<\frac{1}{3c}(L_0^{(1)})^{\frac{1+\constMarcosAlfa}{\lambda \constDecaimento-1}}$, take $L_0^* = L_0^{(1)}$ and choose $n>n_1$ such that $n\in I$.
    \item If $n_1>c_{30}(L_0^{(1)})^{\constMarcosGamma \constMarcosMu-1}$, take $n=n_1$ and choose $L_0^*>L_0^{(1)}$ such that $$n_1 \in \left(\frac{1}{3c}(L_0^*)^{\frac{1+\constMarcosAlfa}{\lambda \constDecaimento-1}}, c_{30}(L_0^*)^{\constMarcosGamma \constMarcosMu-1}\right).$$
\end{enumerate}

In Step 2, we selected parameters to obtain $p_G>s_0$ for crossing events at the initial scale $L_0^{(1)}$. Note that we may take $L_0^*>L_0^{(1)}$ and still have \eqref{eq:LemaL0ePG}. Hence, Step 3 does not affect the argument.


With these parameter choices, conditions \eqref{condicaoAmbiente} and \eqref{conditionPercolacao} are met. As a result, the induction argument in \cite{Marcos} can be applied, showing that \eqref{endMultiscale} holds (as in Equation (48) of \cite{Marcos}), thereby completing the proof of Theorem \ref{mainTheorem}.

For the reader's convenience, we summarize the whole argument as follows (see Figure \ref{fluxogram}).  The multiscale scheme provides parameters $\constNossoBeta_0<1$ and $\constNossoMu>0$, ensuring that the independent model on $\Z^2_+$ will percolate, provided the environment meets certain conditions. These conditions are achieved through Proposition \ref{goodBlocksProposition}, with an appropriate selection of $\lambda$ and $\constDecaimento$, which are the final constants to be determined.

 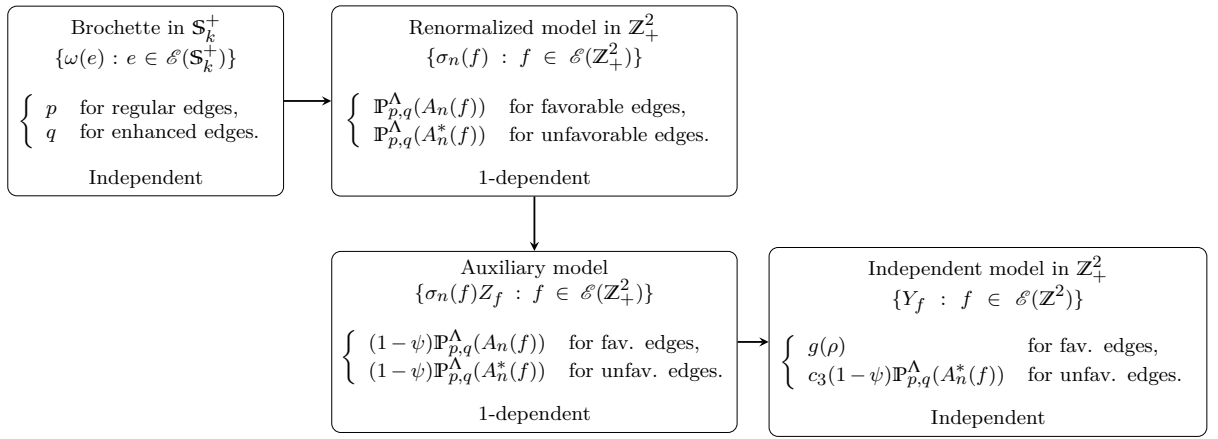
\begin{figure}[ht]
        \centering
        \resizebox{\linewidth}{!}{
            \begin{tikzpicture}[node distance=3cm]
                \node (brochette) [modSK,fill=white,text width=4cm] {Brochette in $\S_k^+$ \\$\{\omega(e): e \in \E(\S_k^+)\}$\vspace{0.3cm} \\ 
                $\left\{\begin{array}{ll}
                     p&  \mbox{for regular edges},\\
                     q& \mbox{for enhanced edges}. 
                \end{array}\right.$\vspace{0.3cm}\\Independent};               
                \node (renormalizado) [modZ2Dep, fill=white,right of=brochette,xshift=3cm, text width=6cm] {Renormalized model in $\Z^2_+$ \\ $\{\sigma_n(f): f \in \E(\Z^2_+)\}$\vspace{0.3cm}\\
                $\left\{\begin{array}{ll}\P^\Lambda_{p,q}(A_n(f))&\mbox{for favorable edges,}\\ 
                \P^\Lambda_{p,q}(A^*_n(f))&\mbox{for unfavorable edges.}\end{array}\right.$\vspace{0.3cm}\\1-dependent};
                \node (Wz) [modZ2Dep,fill=white,below of=renormalizado,text width=6cm,yshift=-0.75cm] {Auxiliary model \\ $\{\sigma_n(f)Z_f: f \in \E(\Z^2_+)\}$\vspace{0.3cm}\\$\left\{\begin{array}{ll}(1-\constNossoMu)\P^\Lambda_{p,q}(A_n(f))& \mbox{for fav. edges},\\(1-\constNossoMu)\P^\Lambda_{p,q}(A^*_n(f))&\mbox{for unfav. edges.}\end{array}\right.$\vspace{0.3cm}\\1-dependent};
                \node (Y) [modZ2inDep,fill=white,right of=Wz,xshift=4cm,text width=6.5cm] {Independent model in $\Z^2_+$ \\$\{Y_f: f \in \E(\Z^2)\}$\vspace{0.3cm}\\$\left\{\begin{array}{ll}g(\constNossoBeta)& \mbox{for fav. edges,}\\c_3(1-\constNossoMu)\P^\Lambda_{p,q}(A^*_n(f))& \mbox{for unfav. edges.}\end{array}\right.$\vspace{0.3cm}\\Independent};

                \draw [arrow] (brochette)--(renormalizado);
                \draw [arrow] (renormalizado)--(Wz);
                \draw [arrow] (Wz) -- (Y);
            \end{tikzpicture}
        }
        \caption{Chain of models showing edge-opening probabilities and edge dependencies for each case.}
        \label{fluxogram}
    \end{figure}

Using Proposition \ref{theoremEdge}, we identify parameters $\lambda_0>0$, $\delta>0$, and $n\geq 1$, ensuring that the event $A_n$ occurs with high probability to satisfy the requirements for stochastic domination. Once these values are established, $\constDecaimento$ is chosen large enough so that $\frac{1}{\constDecaimento}<\lambda_0$. This allows us to select $\lambda \in \left(\frac{1}{\constDecaimento},\lambda_0\right)$, satisfying both Propositions \ref{goodBlocksProposition} and \ref{theoremEdge}, making the environment suitable for the multiscale argument.

With these parameter choices, the multiscale argument shows that the independent model percolates, which, by stochastic domination, implies that the renormalized model also percolates. Finally,  the block construction shows that the original model percolates, concluding the argument.

\section*{Acknowledgements}  
Matheus B. Castro was partially supported by Fundação Coordenação de Aperfeiçoamento de Pessoal de Nível Superior (CAPES) and PPGMAT-UFMG. Rémy Sanchis was partially supported by Conselho Nacional de Desenvolvimento Científico e Tecnológico (CNPq), and Fundação de Amparo à Pesquisa do Estado de Minas Gerais (FAPEMIG), grants APQ-00868-21 and RED-00133-21. Roger Silva was partially supported by FAPEMIG, grant APQ-06547-24.

\section*{Author contributions}
All authors wrote and reviewed the manuscript.

\section*{Data Availability}
Data sharing is not applicable to this article as no datasets were generated or analysed during the current study.

\section*{Conflict on interest}
The authors declare no Conflict of interest.


\begin{thebibliography}{99}

        \bibitem{AizenmannGrimmett} {\sc Aizenman M. and Grimmett G.}: Strict monotonicity for critical points in percolation and ferromagnetic models. \textit{J. Stat. Phy.},  \textbf{63}, 817-835, 1991.

         \bibitem{A} {\sc Andjel E.D.}: Survival of multidimensional contact process in random environments. \textit{Bol. Soc. Brasil. Mat.},  \textbf{23}, 109-119, 1992.

          \bibitem{BS} {\sc Basu D. and Sapozhnikov A.}: Crossing probabilities for critical Bernoulli percolation on slabs. \textit{Ann. Inst. Henri Poincare},  \textbf{53}, 1921-1933, 2017.

          \bibitem{BK} {\sc Berg J. van den and Kesten H}:  On a combinatorial conjecture concerning disjoint occurrences of events. \textit{Ann. Probab.},  \textbf{15}, 354-374, 1987.

  \bibitem{bollobas} {\sc Bollobás, B. and Riordan, O.}: Percolation, Cambridge University Press, New York, 2006.

         \bibitem{Chayes}{\sc Borgs C., Chayes J.T. and Randall D.}:  The van den Berg-Kesten-Reimer inequality: A review. In Perplexing Problems in Probability. Festschrifft in honor of Harry Kesten. Birkhäuser, Boston, MA, 1999.
        
         \bibitem{CC} {\sc Chayes, J. T., Puha, A. and Sweet, T.}:  Independent and dependent percolation, Probability Theory and Applications, IAS/Park City mathematical series 6, 51-118, 1999..

         \bibitem{BDS} {\sc Bramson M., Durrett R. and Schonmann R.}: The contact process in random environment. \textit{Ann. Probab.},  \textbf{19}, 960-983, 1991.
        
        \bibitem{BH}{\sc Broadbent S.R. and Hammersley J.M.}: Percolation processes: I. crystals and mazes. \textit{Math. Proc. Cambridge}, \textbf{53}, 629-641, 1957.

         \bibitem{CK}{\sc Campanino M. and Klein A.}: Decay of two-point functions for $(d+1)$-dimensional percolation, Ising and Potts models with d-dimensional disorder. \textit{Commun. Math. Phys.}, \textbf{135}, 483-497, 1991. 

         \bibitem{CKP}{\sc Campanino M., Klein A. and Perez J.F.}: Localization in the ground state of the Ising model with a random transverse field. \textit{Commun. Math. Phys.}, \textbf{135}, 499-515, 1991. 

         \bibitem{DST}{\sc Duminil-Copin H., Sidoravicius V. and Tassion V.}: Absence of infinite cluster for critical Bernoulli percolation on slabs. \textit{Comm. Pure Appl. Math.}, \textbf{69}, 1397-1411, 2016.
        
        \bibitem{Brochette}{\sc Duminil-Copin H., Hilário M.R., Kozma G. and Sidoravicius V.}: Brochette Percolation. \textit{Isr. J. Math.},\textbf{225}, 479-501, 2018.

        \bibitem{DKT}{\sc Duminil-Copin H., Kozma G. and Tassion V.}: Upper bounds on the percolation correlation length . \textit{In M. Eulália Vares, R. Fernández, L. Renato Fontes, \& C. M. Newman (Eds.), Progress in Probability}, Birkhauser, 347-369, 2021.

        \bibitem{F}{\sc Fisher M.E.}: Scaling, universality and renormalization group theory. In Critical Phenomena \textit{Lecture Notes in Phys.}, \textbf{186}, 1-139, 1983.

        
        
       

       
        
        \bibitem{GrimmettMarstrand}{\sc Grimmett G.R. and  Marstrand J.M.}: The Supercritical Phase of Percolation is Well Behaved. \textit{Proc. Roy. Soc. London Ser. A}, \textbf{430}, 439–457, 1990.


        \bibitem{Marcos}  {\sc Hilário M.,  Sá M., Sanchis R. and Teixeira A.}: Phase transition for percolation on a randomly stretched square lattice. \textit{Ann. Appl. Probab.}, \textbf{33}, 3145-3168, 2023.

        \bibitem{Marcos2}  {\sc Hilário M.,  Sá M., Sanchis R. and Teixeira A.}: A new proof for percolation phase transition on stretched lattices. \textit{arXiv:2311.14644}, 2024.
        
        \bibitem{H}{\sc Hoffman C.}: Phase transition in dependent percolation. \textit{Commun. Math. Phys.}, \textbf{254}, 1-22, 2005. 

         \bibitem{JMP}{\sc Jonasson J., Mossel E. and Peres Y.}: Percolation in a dependent random environment. \textit{Random Struct. Algor.}, \textbf{16}, 333-343, 2000. 

        
        \bibitem{Kesten1987}{\sc Kesten H.}: Scaling relations for 2d-percolation. \textit{Commun. Math. Phys.}, \textbf{109}, 109-156, 1987.  
        
        \bibitem{KSV}{\sc Kesten H., Sidoravicius V. and Vares M.E.}: Oriented percolation in a random environment. \textit{Electron. J. Probab.}, \textbf{27}, 1-49, 2022.

         \bibitem{K}{\sc Klein A.}: Extinction of contact and percolation processes in random environment. \textit{Ann. Probab.}, \textbf{16}, 333-343, 2000. 

         \bibitem{L}{\sc Liggett T.M.}: The survival of one-dimensional contact processes in random environments. \textit{Ann. Probab.}, \textbf{20}, 696-723, 1992.

        \bibitem{LSS}{\sc Liggett T.M., Schonmann R. H. and Stacey A. M.}: Domination by product measures. \textit{Ann. Probab.}, \textbf{25}, 71-95, 1997.

        \bibitem{MW}{\sc McCoy B.M., Wu T.T.}: Theory of a two-dimensional Ising model with random impurities. I. Thermodynamics. \textit{Phys. Rev.}, \textbf{176}, 631-643, 1968. 

        \bibitem{tassion}{\sc Newman C.M., Tassion V. and Wu W.}: Critical percolation and the minimal spanning tree in slabs. \textit{Comm. Pure Appl. Math.} \textbf{70}: 2084-2120,  2017. 

        \bibitem{NV}{\sc Newman C.M., Volchan S.B.}: Persistent survival of one-dimensional contact processes in random environments. \textit{Ann. Probab.} \textbf{24}: 411-421,  1996. 
  
        \bibitem{Nolin}{\sc Nolin P.}: Near-critical percolation in two dimensions. \textit{Electron. J. Probab.}, \textbf{13}, 1562-1623, 2008. 

        \bibitem{R}{\sc Reimer D.}:  Proof of the Van den Berg-Kesten conjecture. \textit{Combin Probab Comput}, \textbf{9}, 27-32, 2000.
        
        \bibitem{Russo1978}{\sc Russo L.}: A note on percolation. \textit{Zeitschrift f{\"u}r Wahrscheinlichkeitstheorie und Verwandte Gebiete}, \textbf{43}, 39-48, 1978.

           \bibitem{S}{\sc Stauffer D.}: Scaling theory of percolation clusters. \textit{Phys. Rep.}, \textbf{54}, 1-74, 1979.
        
        \bibitem{SEYMOUR1978227}{\sc Seymour P. and Welsh D.}: Percolation probabilities on the square lattice. \textit{Ann. Discrete Math.}, \textbf{3}, 227-245, 1978.

        \bibitem{Z}{\sc Zhang Y.}: A note on inhomogeneous percolation. \textit{Ann. Probab.}, \textbf{22}, 803-819, 1994.        

     
\end{thebibliography}
\end{document}